\newtheorem{theorem}{Theorem}[section]
\newtheorem*{theorem*}{Theorem}
\newtheorem{lemma}[theorem]{Lemma}
\newtheorem{proposition}[theorem]{Proposition}
\newtheorem{corollary}[theorem]{Corollary}
\newtheorem*{conjecture*}{Conjecture}
\newtheorem{question}[theorem]{Question}
\newtheorem*{question*}{Question}
\theoremstyle{remark}
\newtheorem{remark}[theorem]{Remark}
\newtheorem{example}[theorem]{Example}
\theoremstyle{definition}
\newtheorem{definition}[theorem]{Definition}
\newtheorem{construction}[theorem]{Construction}
\newcommand{\ie}{{\em i.e.~}\ }
\newcommand{\confer}{{\em cf.~}\ }
\newcommand{\opname}[1]{\operatorname{\mathsf{#1}}}
\renewcommand{\mod}{\opname{mod}\nolimits}
\newcommand{\fdmod}{\opname{fdmod}\nolimits}
\newcommand{\proj}{\opname{proj}\nolimits}
\newcommand{\Mod}{\opname{Mod}\nolimits}
\newcommand{\pd}{\opname{pd}\nolimits}
\newcommand{\add}{\opname{add}\nolimits}
\renewcommand{\ker}{\opname{ker}\nolimits}
\newcommand{\thick}{\opname{thick}\nolimits}
\newcommand{\per}{\opname{per}\nolimits}
\newcommand{\std}{\mathrm{std}}
\newcommand{\Hom}{\opname{Hom}}
\newcommand{\End}{\opname{End}}
\newcommand{\RHom}{\mathbb{R}\hspace{-2pt}\opname{Hom}}
\newcommand{\cHom}{{\mathcal{H}\it{om}}}
\newcommand{\cEnd}{{\mathcal{E}\it{nd}}}
\newcommand{\Ext}{\opname{Ext}}
\newcommand{\ten}{\otimes}
\newcommand{\lten}{{\ten}^{\mathbb{L}}}
\newcommand{\Tor}{\opname{Tor}}
\newcommand{\Cone}{\opname{Cone}}
\newcommand{\ca}{{\mathcal A}}
\newcommand{\cb}{{\mathcal B}}
\newcommand{\cd}{{\mathcal D}}
\newcommand{\ck}{{\mathcal K}}
\newcommand{\cs}{{\mathcal S}}
\newcommand{\ct}{{\mathcal T}}
\newcommand{\cx}{{\mathcal X}}
\newcommand{\cy}{{\mathcal Y}}
\numberwithin{equation}{section}
\begin{document}

\title[The silting theorem]{A differential graded approach to the silting theorem}

\author{Zongzhen Xie, Dong Yang and Houjun Zhang}
\thanks{MSC2020: 16G10, 16E35, 16E45.}
\thanks{Key words: 2-term silting complex, $t$-structure, differential graded algebra, torsion pair}
\dedicatory{Dedicated to Steffen Koenig on the occasion of his 60th birthday}
\address{Zongzhen Xie, Department of Mathematics and Computer Science, School of Biomedical Engineering and Informatics, Nanjing Medical University, Nanjing, 211166, P. R. China.}

\email{zzhx@njmu.edu.cn}
\address{Dong Yang, Department of Mathematics, Nanjing University, Nanjing 210093, P. R. China}

\email{yangdong@nju.edu.cn}
\address{Houjun Zhang, Department of Mathematics, Nanjing University, Nanjing 210093, P. R. China}

\email{zhanghoujun@nju.edu.cn}

\begin{abstract} A silting theorem was established by Buan and Zhou as a generalisation of the classical  tilting theorem of Brenner and Butler. In this paper, we give an alternative proof of the theorem by using differential graded algebras.
\end{abstract}
\maketitle

\section{Introduction}\label{s:introduction}
\medskip
Let $k$ be a field and $A$ be a finite-dimensional $k$-algebra. Let $T$ be a classical tilting module over $A$ and put $B=\End_{A}(T)$. Then there are two adjoint pairs of functors between the two categories of finite-dimensional modules
\[
\xymatrix{
\mod A\ar@<.7ex>[d]^{\Hom_A(T,?)}\\
\mod B\ar@<.7ex>[u]^{?\ten_B T}
}\qquad\qquad
\xymatrix{
\mod A\ar@<.7ex>[d]^{\Ext^1_A(T,?)}\\
\mod B\ar@<.7ex>[u]^{\Tor_1^B(?,T)}
}
\]
Put $\mathscr{T}_T=\ker(\Ext^1_A(T,?)),~\mathscr{F}_T=\ker(\Hom_A(T,?)),~\mathscr{X}_T=\ker(?\ten_B T)$ and $\mathscr{Y}_T=\ker(\Tor^B_1(?,T))$.
The classical tilting theorem of Brenner and Butler \cite{BrennerButler80,HappelRingel82} states that  $(\mathscr{T}_T,\mathscr{F}_T)$ is a torsion pair of $\mod A$, $(\mathscr{X}_T,\mathscr{Y}_T)$ is a torsion pair of $\mod B$, and the restrictions of the above four functors are equivalences
\[
\xymatrix{
\mathscr{T}_T\ar@<.7ex>[d]^{\Hom_A(T,?)}\\
\mathscr{Y}_T\ar@<.7ex>[u]^{?\ten_B T}
}\qquad\qquad
\xymatrix{
\mathscr{F}_T\ar@<.7ex>[d]^{\Ext^1_A(T,?)}\\
\mathscr{X}_T\ar@<.7ex>[u]^{\Tor_1^B(?,T)}
}
\]

Recently, Buan and Zhou generalised this theorem to a silting theorem in \cite{BuanZhou16}. Precisely, let $T$ be a 2-term silting complex in the bounded homotopy category $K^{b}(\proj A)$ of finitely generated projective $A$-modules. For example, any projective resolution of a classical tilting module is isomorphic to a 2-term silting complex in $K^b(\proj A)$. Let $\cd^b(\mod A)$ be the bounded derived category of $\mod A$.

\begin{theorem}[{\cite[Theorem 1.1 and Corollary 2.5]{BuanZhou16}}]
\label{thm:buan-zhou-theorem}
\begin{itemize}
\item[(a)] There is a torsion pair $(\mathscr{T}_T,\mathscr{F}_T)$ of $\mod A$ and a torsion pair $(\mathscr{X}_T,\mathscr{Y}_T)$ of $\mod B$ together with equivalences $\Hom_{\cd^b(\mod A)}(T,?)\colon\mathscr{T}_T\stackrel{\simeq}{\longrightarrow} \mathscr{Y}_T$ and $\Hom_{\cd^b(\mod A)}(T,\Sigma?)\colon\mathscr{F}_T\stackrel{\simeq}{\longrightarrow}\mathscr{X}_T$.
\item[(b)] There is a triangle in $K^b(\proj A)$
$$
\begin{CD}
A @>>> T^{\prime}@>f>> T^{\prime\prime}@>>> \Sigma A
\end{CD}
$$
with $T^{\prime}, T^{\prime\prime}\in \add T$. Let $S$ be the mapping cone of 
$$\begin{CD}
\Hom_{K^{b}(\proj A)}(T,T^{\prime})@>\Hom(T,f)>>\Hom_{K^{b}(\proj A)}(T,T^{\prime\prime}).
\end{CD}$$
\item[(c)] $S$ is a 2-term silting complex in $K^{b}(\proj B)$, where $B=\End_{K^b(\proj A)}(T)$.
\item[(d)] There is a surjective algebra homomorphism  $\pi\colon A\rightarrow \bar{A}:=\End_{K^b(\proj B)}(S)$.
\item[(e)] $\pi$ is an isomorphism if and only if $T$ is tilting.
\end{itemize}
Let $\pi^*\colon \mod \bar{A}\hookrightarrow \mod A$ be the restriction functor.
\begin{itemize}
\item[(f)] The functors $\Hom_{\mathcal{D}^{b}(\mod A)}(T,?)$ and $\pi^*\Hom_{\mathcal{D}^{b}(\mod B)}(S,\Sigma ?)$ restrict to inverse equivalences between $\mathscr{T}_T$ and $\mathscr{F}_S$.
\item[(g)] The functors $\Hom_{\mathcal{D}^{b}(\mod A)}(T,\Sigma?)$ and $\pi^*\Hom_{\mathcal{D}^{b}(\mod B)}(S,?)$ restrict to inverse equivalences between $\mathscr{F}_T$ and $\mathscr{T}_S$.
\end{itemize}
\end{theorem}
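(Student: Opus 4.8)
The plan is to route everything through the dg endomorphism algebra $\Gamma := \REnd_A(T)$. Since $T$ is silting, $\thick_{\cd(A)}(T) = \per A$, so $T$ is a compact generator of $\cd(A)$ and, by Keller's theorem, $\mathbf{F} := \RHom_A(T,?)\colon \cd(A) \xrightarrow{\ \sim\ } \cd(\Gamma)$ is a triangle equivalence (with quasi-inverse $T\lten_\Gamma ?$) restricting to $\per A \simeq \per \Gamma$ and, since $A$ is finite-dimensional and $T\in\per A$, to an equivalence $\mathbf F\colon \cd^b(\mod A)\simeq \cd^b_{fd}(A) \xrightarrow{\ \sim\ } \cd^b_{fd}(\Gamma)$. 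The $2$-term silting hypothesis says exactly that $\Gamma$ is connective, with cohomology $H^i(\Gamma) = \Hom_{\cd(A)}(T,\Sigma^i T)$ concentrated in degrees $-1$ and $0$ and $H^0(\Gamma) = B$; choosing a model of $\Gamma$ concentrated in non-positive degrees gives a dg algebra quotient $q\colon \Gamma \twoheadrightarrow H^0(\Gamma) = B$, and a reduction functor $?\lten_\Gamma B \colon \cd(\Gamma) \to \cd(B)$ left adjoint to restriction along $q$.

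I would first prove (a). As $\Gamma$ is connective, $\cd^b_{fd}(\Gamma)$ carries its canonical homological $t$-structure, with heart $\mod H^0(\Gamma) = \mod B$ and zeroth cohomology functor $H^0$; transporting it along $\mathbf F^{-1}$ gives a bounded $t$-structure on $\cd^b(\mod A)$ with heart $\mathcal{A}_T = \{X : \Hom_{\cd(A)}(T,\Sigma^i X) = 0 \text{ for all } i\neq 0\}$ and an equivalence $H^0\mathbf F = \Hom_{\cd(A)}(T,?)\colon \mathcal{A}_T \xrightarrow{\ \sim\ } \mod B$. Because $T$ is $2$-term (equivalently $\cd^{\leq -1}_{\mathrm{std}} \subseteq \cd^{\leq 0}_{\mathcal{A}_T} \subseteq \cd^{\leq 0}_{\mathrm{std}}$ in $\cd^b(\mod A)$), the Happel--Reiten--Smal{\o} formalism applies: $(\mathscr{T}_T, \mathscr{F}_T) := (\mod A \cap \cd^{\leq 0}_{\mathcal{A}_T},\, \mod A \cap \cd^{\geq 1}_{\mathcal{A}_T})$ is a torsion pair of $\mod A$ whose HRS-tilt is $\mathcal{A}_T$, and one identifies $\mathscr{T}_T = \ker\Hom_{\cd(A)}(T,\Sigma?)$, $\mathscr{F}_T = \ker\Hom_{\cd(A)}(T,?)$. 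Since HRS-tilting is involutive up to shift, $\mod A$ is the HRS-tilt of $\mathcal{A}_T$ at the torsion pair $(\Sigma\mathscr{F}_T, \mathscr{T}_T)$; transporting this across $\Hom_{\cd(A)}(T,?)\colon \mathcal{A}_T\simeq\mod B$ produces the torsion pair $(\mathscr{X}_T,\mathscr{Y}_T)$ of $\mod B$ with $\mathscr{Y}_T = \Hom_{\cd(A)}(T,\mathscr{T}_T)$, $\mathscr{X}_T = \Hom_{\cd(A)}(T,\Sigma\mathscr{F}_T)$, and the two equivalences of (a) are the restrictions of $H^0\mathbf F$ and $H^0\mathbf F\circ\Sigma$.

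For (b)--(e): the $2$-term condition is equivalent to $A \in (\Sigma^{-1}\add T)*(\add T)$ inside $\per A$ (the co-$t$-structure truncation of $A$ against the co-heart $\add T$), which is the triangle $A\to T' \xrightarrow{f} T''\to \Sigma A$ of (b). Applying $\mathbf F$ and then $?\lten_\Gamma B$: as $\mathbf F T', \mathbf F T'' \in \add_\Gamma \Gamma$, they go to the projective $B$-modules $\Hom_{\cd(A)}(T,T'), \Hom_{\cd(A)}(T,T'')$ (in degree $0$), so $\mathbf F A\lten_\Gamma B \cong \Sigma^{-1}\Cone(\Hom_{\cd(A)}(T,f))$; thus the complex $S$ of (b) is $\Sigma(\mathbf F A \lten_\Gamma B)$, a $2$-term complex of $K^b(\proj B)$. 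It generates $\per B$ because $\mathbf F A$ does so for $\per\Gamma$ and $?\lten_\Gamma B$ is a triangle functor sending $\Gamma$ to $B$; and $\Hom_{K^b(\proj B)}(S,\Sigma S) = 0$ --- i.e.\ (c) --- follows from the adjunction $(?\lten_\Gamma B,\,\mathrm{res})$ together with the bounds $\mathbf F A\in \cd^{[0,1]}(\Gamma)$ and $\Cone(q)\simeq \Sigma^2 H^{-1}(\Gamma)\in\cd^{\leq -2}(\Gamma)$. For (d)--(e), $\pi$ is the composite $A = \End_{\cd(A)}(A)\xrightarrow{\ \sim\ }\End_{\cd(\Gamma)}(\mathbf F A) \to \End_{\cd(B)}(S) = \bar A$, the second arrow being induced by $?\lten_\Gamma B$; by adjunction it is post-composition with the unit $\eta\colon \mathbf F A\to (\mathbf F A\lten_\Gamma B)|_\Gamma$, whose cone is $\mathbf F A\lten_\Gamma \Cone(q)\in \cd^{\leq -1}(\Gamma)$. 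Since $\mathbf F A\in(\Sigma^{-1}\add\Gamma)*(\add\Gamma)$ and $\mathbf F T', \mathbf F T''$ are $\Gamma$-projective, $\Hom_{\cd(\Gamma)}(\mathbf F A,\Cone(\eta)) = 0$, so $\pi$ is surjective; it is in addition injective iff $\Hom_{\cd(\Gamma)}(\mathbf F A,\Sigma^{-1}\Cone(\eta)) = 0$, and since $\Sigma^{-1}\Cone(\eta)\in\cd^{\leq 0}(\Gamma)$ is built from $H^{-1}(\Gamma) = \Hom_{\cd(A)}(T,\Sigma^{-1}T)$ this happens exactly when $T$ is tilting.

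Finally (f)--(g). As $S$ is a $2$-term silting complex of $K^b(\proj B)$, the whole of (a) applies to it: with $\Delta := \REnd_B(S)$ (connective, $H^0(\Delta) = \bar A$) and $\RHom_B(S,?)$ one gets a $t$-structure on $\cd^b(\mod B)$ with heart $\mathcal{A}_S \simeq \mod\bar A$, intermediate with respect to the standard one, hence a torsion pair $(\mathscr{T}_S,\mathscr{F}_S)$ of $\mod B$. It remains to show $(\mathscr{T}_S,\mathscr{F}_S) = (\mathscr{X}_T,\mathscr{Y}_T)$ and to match the functors: using $S = \Sigma(\mathbf F A\lten_\Gamma B)$ and the adjunctions, for $Y\in\mod B$ one has $\Hom_{\cd(B)}(S,Y)\cong \Hom_{\cd(\Gamma)}(\mathbf F A,\Sigma^{-1}(Y|_\Gamma))\cong H^{-1}\big(T\lten_\Gamma(Y|_\Gamma)\big)$, and after applying $\pi^*$ this vanishing is recognised as the one cutting $\mathscr{Y}_T$ out of $\mod B$ (and dually on the torsion side); the two pairs of functors in (f) and (g) then appear as $\pi^*$-translates of one another by composing the equivalence $H^0\mathbf F$ on $\mathcal{A}_T$ with the corresponding equivalence $H^0\RHom_B(S,?)$ on $\mathcal{A}_S$. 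The hard part is exactly this last compatibility --- tracking, through the \emph{non-invertible} reduction $?\lten_\Gamma B$ and its right adjoint, the shifts and the interaction of the $(A,\Gamma)$-side with the $(B,\Delta)$-side; within (b)--(e) the subtle points are the proof of (c) and the injectivity half of (e), where connectivity of $\Gamma$ and the $2$-term cohomological bounds have to be handled carefully.
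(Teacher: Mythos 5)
Your proposal follows essentially the same route as the paper: pass to the (truncated, connective) dg endomorphism algebra $\Gamma$ of $T$ so that $\RHom_A(T,?)$ is a derived equivalence, read off the torsion pairs and equivalences of (a) from the transported standard $t$-structure and the Happel--Reiten--Smal{\o} bijection, and obtain $S$, $\pi$, and the remaining equivalences by factoring through the dg projection $q\colon\Gamma\twoheadrightarrow B=H^0(\Gamma)$ and the adjoint pair $(?\lten_\Gamma B,\ \mathrm{res})$ --- this is exactly the paper's strategy, where your reduction $?\lten_\Gamma B$ is the induction functor $p_*$ and your $\Sigma(\mathbf F A\lten_\Gamma B)$ is the paper's $p_*\widetilde S$. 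For (c), where you sketch a cohomological-bounds argument, the paper simply cites \cite[Proposition A.3]{BruestleYang13}; for surjectivity of $\pi$ in (d) it cites \cite[Proposition A.5]{BruestleYang13}; your direct arguments are reasonable reproofs of those lemmas.

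The one genuine gap is the ``only if'' direction of (e). Your claim that $\pi$ is injective ``iff $\Hom_{\cd(\Gamma)}(\mathbf FA,\Sigma^{-1}\Cone(\eta))=0$'' is not correct as stated: from the long exact sequence, injectivity of $\pi$ is equivalent to the vanishing of the \emph{image} of the connecting map $\Hom_{\cd(\Gamma)}(\mathbf FA,\Sigma^{-1}\Cone(\eta))\to\End_{\cd(\Gamma)}(\mathbf FA)$, which is strictly weaker than vanishing of the source. And even granting the stronger vanishing criterion, the step ``this happens exactly when $T$ is tilting'' is asserted but not argued: the implication ``$T$ tilting $\Rightarrow H^{-1}(\Gamma)=0\Rightarrow\Cone(q)=0\Rightarrow\pi$ iso'' is clear, but for the converse one must show that $H^{-1}(\Gamma)\neq 0$ actually forces some nonzero element of the kernel of $\pi$, which does not follow from the degree bounds alone. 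This is not an oversight you should be embarrassed about: the paper itself, in the proof of its Theorem~\ref{thm:silting-theorem-algebra}, explicitly notes that it has no dg-theoretic proof of ``$\pi$ isomorphism $\Rightarrow T$ tilting'' and defers to the argument at the end of the proof of \cite[Proposition~4.1]{BuanZhou16}. If you want a complete proof, you should do the same for this direction, or supply the missing argument from the Buan--Zhou paper.
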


Part (a) of Theorem~\ref{thm:buan-zhou-theorem} has been generalised to the case when $A$ is a ring and $T$ is a 2-term large silting complex with $\mod$ replaced by $\Mod$ (the category of all modules), see  \cite[Section 4.3]{BreazModoi20}. In this paper, we give an alternative proof of Theorem~\ref{thm:buan-zhou-theorem} by using differential graded (=dg) algebras. We establish the result in the following three settings (Section~\ref{ss:special-case-algebra}):
\begin{itemize}
\item[(1)] the category $\Mod A$ of all modules, where $A$ is a ring;
\item[(2)] the category $\mod A$ of modules which are finitely generated over the base ring $k$, which is noetherian;
\item[(3)] the category $\fdmod A$ of modules which are finite-dimensional over the base ring $k$, which is a field.
\end{itemize}
Note that (3) is a special case of (2).  Let us briefly explain the idea. Let $A$ be a ring and $T$ be a 2-term silting complex in $K^b(\proj A)$.  Part (a) of our result in Setting (1) is the same as \cite[Corollary 4.3.3(b)(c)]{BreazModoi20} and a similar idea as that used for its proof was used in \cite[Section 4.3]{BreazModoi20}. Consider the truncated dg endomorphism algebra $\Gamma$ of $T$ (the dg endomorphism algebra of $T$, truncated at degree $0$), which is a non-positive dg algebra, and put $B=H^{0}(\Gamma)=\End_{K^b(\proj A)}(T)$. Then the derived Hom-functor associated with $T$ is a triangle equivalence between the derived categories of $A$ and $\Gamma$. Then (a) follows immediately, just as the classical tilting theorem of  Brenner and Bulter is a consequence of the derived equivalence between $A$ and $B$ together with the bijection between torsion pairs and intermediate $t$-structures (see \cite[Chapter III, Theorem 3.2]{Happel88} and \cite[Chapter I, Theorem 3.1]{BeligiannisReiten07}). 
The idea for proving (c) and (d) was already explained by Buan and Zhou in the paragraph after \cite[Theorem 1.1]{BuanZhou16}. Let us repeat here. Let $p\colon \Gamma\rightarrow B=H^0(\Gamma)$ be the canonical projection. Then by \cite[Proposition A.3]{BruestleYang13}, the induction functor $p_*\colon \per(\Gamma)\rightarrow \per(B)\simeq K^b(\proj B)$ takes $\Sigma A$ to a 2-term silting complex in $K^b(\proj B)$. This 2-term silting complex is exactly the complex $S$ in (b) and the algebra homomorphism $\pi\colon A\to\bar{A}$ in (d) is the one induced by the functor $p_*$. The results (f) and (g) follow from the adjunction between $p_*$ and the restriction functor $p^*$. Actually we first establish these results for non-positive dg algebras in Sections~\ref{ss:silting-theorem-first-step}~and~\ref{ss:silting-theroem-second-step} and then specialise it to algebras in Section~\ref{ss:special-case-algebra}.

We also study the question when $S$ is a tilting complex. We show that this is the case in the following three cases:
\begin{itemize}
\item[-] $A$ is hereditary, 
\item[-] $k$ is a field and $A$ is a finite-dimensional symmetric $k$-algebra, 
\item[-] $T$ is a left mutation of $A$ or a right mutation of $\Sigma A$.
\end{itemize}

The paper is structured as follows. In Section~\ref{s:non-positive-dg-algebras} we recall the basics on silting objects, $t$-structures and derived categories of dg algebras, including $t$-structures induced by silting objects and restrictions of such $t$-structures to certain subcategories. In Section~\ref{s:2-term-silting-objects} we study specifically $t$-structures induced by $2$-term silting objects. In Section~\ref{s:the-silting-theorem} we prove the silting theorem. In Section~\ref{s:stability} we study the question when $S$ is a tilting complex.

\medskip
\noindent\emph{Notations and conventions.} 
For a ring $A$, let $\Mod A$ denote the category of (right) $A$-modules, and $\mod A$ denote  its full subcategory of $A$-modules finite generated over $k$ and $\proj A$ denote its full subcategory of finitely generated projective modules, respectively. If $k$ is a field and $A$ is a $k$-algebra, we denote by $\fdmod A$ the category of $A$-modules which are finite-dimensional over $k$. For an additive category $\ca$, denote by $K^{b}(\ca)$ the bounded homotopy category of $\ca$ and, if $\ca$ is abelian, by $\cd^{b}(\ca)$ the bounded derived category of $\ca$. For a complex $X$ and an integer $i$, denote by $H^{i}(X)$ the $i$-th cohomology of $X$. 

Let $\ct$ be a triangulated category with shift functor $\Sigma$. For two subcategories $\cx$ and $\cy$ of $\ct$ we denote by $\cx*\cy$ the full subcategory of $\ct$ consisting of objects $Z$ such that there is a triangle $X\to Z\to Y\to \Sigma X$ with $X\in\cx$ and $Y\in\cy$.

Throughout let $k$ be a commutative ring. 

\medskip
\noindent\emph{Acknowledgement}. {The authors would like to thank Bernhard Keller and Xiao-Wu Chen for answering their questions. They are grateful to Yu Zhou for helpful comments and to Simion Breaz for pointing out the generalisation of Theorem~\ref{thm:buan-zhou-theorem}(a) with the references \cite{BreazModoi20,BreazModoi17}. Zongzhen Xie acknowledges support by the National Natural Science Foundation of China No. 12101320. Dong Yang acknowledges support by the National Natural Science Foundation of China No. 12031007. Houjun Zhang acknowledges support by a project funded by China Postdoctoral Science Foundation (2020M681540).}

\section{Non-positive dg algebras} 
\label{s:non-positive-dg-algebras}

In this section, we recall some notions and results on $t$-structures, silting objects and derived categories of differential graded (=dg) algebras.

\subsection{Silting objects and $t$-structures} 

Let $\mathcal{T}$ be a triangulated category with shift functor $\Sigma$. For a subcategory or a set of objects $\cs$ of $\mathcal{T}$, we denote by $\thick(\cs)$ the \emph{thick subcategory} of $\mathcal{T}$ generated by $\cs$, \ie the smallest triangulated subcategory of $\mathcal{T}$ which contains $\cs$ and which is closed under taking direct summands. In the following, we give the definitions of silting objects and $t$-structures. For more details we refer to \cite{AiharaIyama12,KellerVossieck88,BeilinsonBernsteinDeligne82}.
\begin{definition}
\label{def:silting} 
An object $M$ of $\mathcal{T}$ is said to be 

\begin{itemize}
\item[(1)]
\emph{presilting} if $\Hom_{\mathcal{T}}(M,\Sigma^i M)=0$ for all $i>0$;

\item[(2)]
\emph{silting} if it is presilting and $\mathcal{T}=\thick(M)$;

\item[(3)]
\emph{tilting} if $\Hom_{\mathcal{T}}(M,\Sigma^i M)=0$ for $i\neq 0$ and $\mathcal{T}=\thick(M)$.
\end{itemize}
\end{definition}

The following result is \cite[Theorem 5.5]{MendozaSaenzSantiagoSouto13} (see also \cite[Proposition 2.8]{IyamaYang18}).

\begin{proposition}
\label{prop:silting-->co-t-str}
Let $M$ be a silting object of $\ct$. Then
\[
\ct=\bigcup_{n\in\mathbb{N}}(\Sigma^{-n}\add(M)*\cdots*\Sigma^n\add(M)).
\]
\end{proposition}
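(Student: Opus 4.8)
The plan is to set $\mathcal{C}_n:=\Sigma^{-n}\add(M)*\cdots*\Sigma^n\add(M)$ (well defined since $*$ is associative by the octahedral axiom) and $\mathcal{C}:=\bigcup_{n\in\mathbb{N}}\mathcal{C}_n$, and to prove that $\mathcal{C}$ is a thick subcategory of $\ct$ containing $M$; as $M$ is silting this forces $\mathcal{C}=\thick(M)=\ct$, which is exactly the assertion (the inclusion $\mathcal{C}\subseteq\ct$ is trivial). Since $0\in\add(M)$ one has $\mathcal{C}_n\subseteq\mathcal{C}_{n+1}$ and $\Sigma^{\pm1}\mathcal{C}_n\subseteq\mathcal{C}_{n+1}$ (pad the $*$-product with zero objects), and $M\in\mathcal{C}_0$; so $\mathcal{C}$ is an increasing union, is closed under $\Sigma$ and $\Sigma^{-1}$, and contains $M$. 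It remains to show that $\mathcal{C}$ is closed under extensions and under direct summands.

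For extensions I would first record a reordering lemma that uses only that $M$ is presilting. On one hand $\add(M)$ is extension-closed: the connecting morphism of a triangle $M_1\to Z\to M_2\to\Sigma M_1$ with $M_i\in\add(M)$ lies in $\Hom_{\ct}(M_2,\Sigma M_1)$, a direct summand of a finite direct sum of copies of $\Hom_{\ct}(M,\Sigma M)=0$, so the triangle splits. On the other hand, for integers $a>b$ the connecting morphism of any triangle with outer terms in $\Sigma^a\add(M)$ and $\Sigma^b\add(M)$ lies in a finite sum of copies of $\Hom_{\ct}(M,\Sigma^{a+1-b}M)=0$ (here $a+1-b\geq2$), hence $\Sigma^a\add(M)*\Sigma^b\add(M)=\add(\Sigma^aM\oplus\Sigma^bM)\subseteq\Sigma^b\add(M)*\Sigma^a\add(M)$. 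Using associativity and monotonicity of $*$ one then bubble-sorts the exponents of an arbitrary finite $*$-product of subcategories $\Sigma^{a_i}\add(M)$ into non-decreasing order, collapses adjacent equal factors, and pads missing exponents with zero objects, obtaining that the product lies in $\mathcal{C}_N$ for $N=\max_i|a_i|$. In particular $\mathcal{C}_n*\mathcal{C}_m\subseteq\mathcal{C}_{\max(n,m)}$, so $\mathcal{C}$ is extension-closed; being also closed under $\Sigma^{\pm1}$, it is closed under cones of its morphisms (rotate the defining triangle), i.e. it is a triangulated subcategory, and therefore $\tria(M)\subseteq\mathcal{C}\subseteq\thick(M)$.

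The main obstacle is closure under direct summands, which cannot be bypassed because in general $\tria(M)\subsetneq\thick(M)$. Conceptually this is the statement that the presilting object $M$ generates a bounded co-t-structure on $\thick(M)$ with heart $\add(M)$, of which $\mathcal{C}$ is the union of the bounded pieces; this is \cite[Theorem 5.5]{MendozaSaenzSantiagoSouto13}, which one may simply invoke. For a self-contained argument I would induct on the number of layers: given $Z=X\oplus Y\in\mathcal{C}_n$ with idempotent $e\colon Z\to Z$ cutting out $X$, peel off the top layer by a triangle $W\xrightarrow{\alpha}Z\xrightarrow{\beta}\Sigma^nQ\to\Sigma W$ with $W\in\Sigma^{-n}\add(M)*\cdots*\Sigma^{n-1}\add(M)$ and $Q\in\add(M)$; since $\Hom_{\ct}(W,\Sigma^nQ)=0$ by presilting, $\beta e\alpha=0$, so $e\alpha=\alpha u$ for some $u\colon W\to W$ that extends to a morphism of triangles, and $\alpha(u^2-u)=0$ shows that $u$ is idempotent modulo the ideal of $\End(W)$ of endomorphisms factoring through an object of $\Sigma^{n-1}\add(M)$. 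Splitting $u$ and the induced idempotent on $\Sigma^nQ$ — using that one may assume $\ct$ idempotent complete (automatic in all the intended applications, and in general arranged by arguing in the idempotent completion and restricting) — exhibits $X$ as an extension of a direct summand of $W$ by a direct summand of $\Sigma^nQ\in\Sigma^n\add(M)$, which lie in some $\mathcal{C}_N$ by the inductive hypothesis and the reordering lemma. Producing the splitting of $u$ is the one genuinely delicate step of the argument.
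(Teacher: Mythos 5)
The paper gives no proof of this statement; it simply cites \cite[Theorem 5.5]{MendozaSaenzSantiagoSouto13} and \cite[Proposition 2.8]{IyamaYang18}, which is also your first option, so in that respect you match the source. Your self-contained sketch has the right conceptual shape (this is the bounded co-$t$-structure generated by the presilting heart $\add(M)$), the reduction to ``$\mathcal{C}$ is thick'' is correct, and the reordering lemma and extension-closure argument are fine.

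However, the direct-summand step --- which is the entire content of the cited theorem --- contains a genuine gap, and it is not one that idempotent-completeness of $\ct$ resolves. Your $u$ satisfies $\alpha(u^2-u)=0$, so $u^2-u$ factors through $\Sigma^{-1}\gamma\colon\Sigma^{-1}P=\Sigma^{n-1}Q\to W$; but $\Hom_\ct(W,\Sigma^{n-1}Q)$ is in general nonzero, since the top layer of $W$ lies in $\Sigma^{n-1}\add(M)$ and $\Hom_\ct(M,M)\neq 0$. Thus $u$ is idempotent only modulo a two-sided ideal of $\End_\ct(W)$ which need not be nil, let alone nilpotent, and there is no general mechanism for lifting an idempotent along such a quotient --- idempotent-completeness only lets you split idempotents you already have, not manufacture one in a prescribed coset. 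Peeling from the bottom layer $\Sigma^{-n}\add(M)$ instead produces the symmetric obstruction. So the inductive step does not close as written. Direct-summand closure of these filtration classes is precisely the nontrivial point behind the existence of the co-$t$-structure generated by a presilting object, and the literature proves it by different, more global arguments than splitting a near-idempotent; one really must either reproduce one of those proofs or, as both you and the authors do, simply cite the result.
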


Let $M$ be a silting object of $\ct$ and $n$ be a positive integer. We say that a silting object $N$ of $\ct$ is \emph{$n$-term} with respect to $M$ if $N\in\add(M)*\cdots*\Sigma^{n-1}\add(M)$. The following lemma is a reformulation of \cite[Proposition 3.8]{AdachiMizunoYang19}. One can adapt the proof of \cite[Corollary 2.4]{IyamaJorgensenYang14} to give a direct proof.

\begin{lemma}
\label{lem:n-term-silting}
Let $M$ and $N$ be silting objects of $\ct$. Then $N$ is $n$-term with respect to $M$ if and only if $\Sigma^{n-1}M$ is $n$-term with respect to $N$.
\end{lemma}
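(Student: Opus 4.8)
The plan is to exploit the symmetry of the statement: since $N$ being $n$-term with respect to $M$ should be equivalent to $\Sigma^{n-1}M$ being $n$-term with respect to $N$, it suffices to prove one implication, say that $N\in\add(M)*\cdots*\Sigma^{n-1}\add(M)$ implies $\Sigma^{n-1}M\in\add(N)*\cdots*\Sigma^{n-1}\add(N)$; the converse then follows by applying this to the pair $(N,\Sigma^{n-1}M)$ in place of $(M,N)$, after checking that $\Sigma^{n-1}M$ is again silting (clear, since $\thick(\Sigma^{n-1}M)=\thick(M)=\ct$ and the presilting condition is shift-invariant) and observing that $\Sigma^{n-1}(\Sigma^{n-1}M)=\Sigma^{2(n-1)}M$ must be rewritten appropriately — in fact the cleanest formulation is that the relation ``$N$ is $n$-term w.r.t.\ $M$'' is symmetric in $M$ and $N$ up to the shift $\Sigma^{n-1}$, so proving $\Rightarrow$ once suffices.

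First I would set up the standard machinery of co-$t$-structures (weight structures) attached to a silting object. To a silting object $M$ one associates the co-aisle generated by $\{\Sigma^{i}M : i\le 0\}$; concretely, following \cite{MendozaSaenzSantiagoSouto13} and Proposition~\ref{prop:silting-->co-t-str}, the subcategory $\cx_M := \bigcup_{a\le b}(\Sigma^{-b}\add M * \cdots * \Sigma^{-a}\add M)$ and its shifts give a co-$t$-structure $(\cx_M^{\le 0},\cx_M^{\ge 0})$ with coheart $\add M$. The key translation is: $N$ is $n$-term with respect to $M$ if and only if $N\in\cx_M^{[0,n-1]}$, i.e.\ $N$ lies between the two co-aisles in the sense $N\in \cx_M^{\le 0}\cap \Sigma^{-(n-1)}\cx_M^{\ge 0}$ (with whatever sign convention makes $\add M*\cdots*\Sigma^{n-1}\add M$ come out right). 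Silting objects correspond bijectively to bounded co-$t$-structures, and the condition ``$n$-term'' says the co-$t$-structure of $N$ is sandwiched between that of $M$ and its $(n-1)$-fold shift.

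The main step is then a Hom-vanishing argument. Using that $N$ is presilting, $\Hom_\ct(N,\Sigma^i N)=0$ for $i>0$, together with $N\in\add M*\cdots*\Sigma^{n-1}\add M$, I would show by a decomposition-and-induction argument on the filtration of $N$ that $\Hom_\ct(\Sigma^j M, \Sigma^{i} N)=0$ for suitable ranges of $i,j$, hence that $\Sigma^{n-1}M$ is built from $N,\Sigma N,\dots,\Sigma^{n-1}N$; here one feeds in the triangle expressing $M$ (equivalently $\Sigma^{n-1}M$) via the co-$t$-structure truncations associated with $N$, and uses boundedness (Proposition~\ref{prop:silting-->co-t-str} applied to $N$) to guarantee the truncation is supported in the window $[0,n-1]$. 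The alternative, as the authors note, is to transcribe verbatim the proof of \cite[Corollary 2.4]{IyamaJorgensenYang14}, which runs exactly this induction in the context of silting mutation.

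The hard part will be bookkeeping the sign/shift conventions so that the two co-$t$-structures are nested in the correct direction, and making the ``sandwich'' precise enough that the symmetry argument genuinely closes the loop rather than merely swapping the roles of $M$ and $N$ while leaving an off-by-$\Sigma^{n-1}$ discrepancy. Once the correspondence between $n$-term silting objects and nested bounded co-$t$-structures is stated cleanly, the Hom-vanishing induction is routine, so I would invest the effort in the setup rather than the computation.
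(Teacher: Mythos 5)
The paper itself gives no proof of this lemma: it simply observes that the statement is a reformulation of a result of Adachi--Mizuno--Yang and that a direct proof can be obtained by adapting the argument of Iyama--J\o rgensen--Yang. So there is no in-paper proof to compare yours against; I can only judge your plan on its own terms and against those references.

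Your overall strategy is the right one and matches what those references do. The symmetry reduction is sound and I can make it fully precise: if the forward implication holds in general, then applying it to the pair $(N,\Sigma^{n-1}M)$ in place of $(M,N)$ gives that $\Sigma^{n-1}N$ is $n$-term with respect to $\Sigma^{n-1}M$, i.e.\ $\Sigma^{n-1}N\in\Sigma^{n-1}(\add M*\cdots*\Sigma^{n-1}\add M)$, and applying $\Sigma^{-(n-1)}$ recovers exactly ``$N$ is $n$-term with respect to $M$''. So there is no off-by-$\Sigma^{n-1}$ discrepancy; the loop closes cleanly. It is perhaps cleaner still to observe that ``$N$ is $n$-term w.r.t.\ $M$'' is, via the standard co-$t$-structure argument, equivalent to the two Hom-vanishing conditions $\Hom(M,\Sigma^{i}N)=0$ for $i>0$ and $\Hom(N,\Sigma^{i}M)=0$ for $i\ge n$, and that this pair of conditions is literally invariant under the substitution $(M,N)\mapsto(N,\Sigma^{n-1}M)$ --- at which point the lemma is immediate and the ``prove one direction, then flip'' step is redundant.

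Two caveats. First, the formula you write for the co-aisle, $\cx_M=\bigcup_{a\le b}(\Sigma^{-b}\add M*\cdots*\Sigma^{-a}\add M)$, is by Proposition~\ref{prop:silting-->co-t-str} equal to all of $\ct$ as stated; you need to fix one endpoint (say $a=0$) and let only $b$ range, so that you obtain the genuine co-aisle $\bigcup_{b\ge 0}(\Sigma^{-b}\add M*\cdots*\add M)$. Second, and more substantively, the entire content of the lemma sits in the implication ``Hom-vanishing (plus both being silting) implies membership in $\add N*\cdots*\Sigma^{n-1}\add N$''. You defer this, saying it is routine once the setup is in place, but it is precisely the d\'evissage-and-truncation argument that the paper points to in \cite[Corollary 2.4]{IyamaJorgensenYang14}, and a careful version of it should be the bulk of a real write-up. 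As a plan it is sound; as a proof it is incomplete at exactly the step that carries the mathematical weight.
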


Let $A$ be a $k$-algebra. Then the free module $A_A$ of rank $1$ is a typical silting object of $K^b(\proj A)$. It is in fact a tilting object. We will call silting objects (respectively, tilting objects) of $K^b(\proj A)$ \emph{silting complexes} (respectively, \emph{tilting complexes}), and a silting complex is said to be \emph{$n$-term} if it has non-trivial components only in degrees $-(n-1),\ldots,-1,0$.

\medskip

\begin{definition}\label{2.2}
A $\emph t$-$\emph structure$ on $\mathcal{T}$ is a pair $(\mathcal{T}^{\leq 0},\mathcal{T}^{\geq 0})$ of strict (that is, closed under isomorphisms) and full subcategories of $\mathcal{T}$ such that the following conditions are satisfied for $\mathcal{T}^{\geq i}:=\Sigma^{-i}\mathcal{T}^{\geq 0}$, $\mathcal{T}^{\leq i}:=\Sigma^{-i}\mathcal{T}^{\leq 0}$
\begin{itemize}
\item[(1)] $\mathcal{T}^{\leq 0}\subseteq \mathcal{T}^{\leq 1}$ and $\mathcal{T}^{\geq 1}\subseteq \mathcal{T}^{\geq 0}$;
\item[(2)] $\Hom_{\mathcal{T}}(X,Y)=0$ for $X\in \mathcal{T}^{\leq 0}$ and $Y\in \mathcal{T}^{\geq 1}$;
\item[(3)] $\mathcal{T}=\mathcal{T}^{\leq 0}\ast\mathcal{T}^{\geq 1}$.
\end{itemize}
The $t$-structure $(\mathcal{T}^{\leq 0},\mathcal{T}^{\geq 0})$ is said to be \emph{bounded} if
\[
\bigcup_{i\in \mathbb{Z}}\mathcal{T}^{\leq i}=\mathcal{T}=\bigcup_{i\in \mathbb{Z}}\mathcal{T}^{\geq i}.
\]
\end{definition}

The category $\ca=\mathcal{T}^{\leq 0}\cap \mathcal{T}^{\geq 0}$ is called the \emph{heart} of the $t$-structure $(\mathcal{T}^{\leq 0},\mathcal{T}^{\geq 0})$ and it is an abelian category due to \cite[Th\'eor\`eme 1.3.6]{BeilinsonBernsteinDeligne82}. For any $Z\in\ct$, there is a triangle by the condition (3)
\[
\xymatrix{
X\ar[r] & Z\ar[r] & Y\ar[r] & \Sigma X
}
\]
with $X\in\ct^{\leq 0}$ and $Y\in\ct^{\geq 1}$.
Due to (1) and (2) this triangle is unique up to a unique isomorphism, so the correspondences $Z\mapsto X$ and $Z\mapsto Y$ extend to functors $\sigma^{\leq0}\colon \mathcal{T}\rightarrow \mathcal{T}^{\leq 0}$ and $\sigma^{\geq 1}\colon \mathcal{T}\rightarrow \mathcal{T}^{\geq 1}$, respectively, called the $\emph truncation~~functors$. The functor $\sigma^{\leq0}$ is right adjoint to the inclusion $\mathcal{T}^{\leq 0}\rightarrow \mathcal{T}$ and the functor $\sigma^{\geq 1}$ is left adjoint to the inclusion $\mathcal{T}^{\geq 1}\rightarrow \mathcal{T}$. Moreover, the functor $H^0=\sigma^{\leq 0}\sigma^{\geq 0}\colon\ct\to\ca$ is a cohomological functor.

\begin{example}
Let $\ca$ be an abelian category.
Then $\mathcal{D}^{b}(\mathcal{A})$ has a $t$-structure $(\mathcal{D}^{\leq 0}_{\mathrm{std}}, \mathcal{D}^{\geq 0}_{\mathrm{std}})$, where
\begin{align*}
\mathcal{D}^{\leq 0}_{\mathrm{std}}&=\{X\in \mathcal{D}^{b}(\mathcal{A})\mid H^{i}(X)=0 ~~\forall i>0\},\\
\mathcal{D}^{\geq 0}_{\mathrm{std}}&=\{X\in \mathcal{D}^{b}(\mathcal{A})\mid H^{i}(X)=0 ~~\forall i<0\}.
\end{align*}
This $t$-structure is bounded and is called the \emph{standard $t$-structure} on $\cd^b(\ca)$.
\end{example}

Let $(\mathcal{T}^{\leq 0},\mathcal{T}^{\geq 0})$ be a $t$-structure  and $n$ be a positive integer. A $t$-structure $(\mathcal{T}^{\prime\leq 0},\mathcal{T}^{\prime\geq 0})$ is said to be \emph{$n$-intermediate} with respect to $(\mathcal{T}^{\leq 0},\mathcal{T}^{\geq 0})$ if $\mathcal{T}^{\leq -n+1}\subseteq\mathcal{T}^{\prime\leq 0}\subseteq\mathcal{T}^{\leq 0}$, or equivalently, $\mathcal{T}^{\geq 0}\subseteq\mathcal{T}^{\prime\geq 0}\subseteq\mathcal{T}^{\geq -n+1}$. In this case, the $t$-structure $(\mathcal{T}^{\leq 0},\mathcal{T}^{\geq 0})$ is bounded if and only if so is $(\mathcal{T}^{\prime\leq 0},\mathcal{T}^{\prime\geq 0})$. When $n=2$, we say \emph{intermediate} instead of $2$-intermediate. 
The following construction is due to Happel--Reiten--Smal{\o}, see \cite[Chapter 1, Proposition 2.1]{HappelReitenSmaloe96} and \cite[Proposition 2.5]{Bridgeland05}. 

\begin{construction}
\label{constr:HRS-tilt}
Let $(\ct^{\leq 0},\ct^{\geq 0})$ be a bounded $t$-structure on $\ct$ with heart $\ca$ and let $(\mathscr{T},\mathscr{F})$ be a torsion pair on $\ca$. Define two full subcategories of $\ct$ by
\begin{align*}
\ct^{\prime\leq 0}&=\{X\in\ct\mid H^i(X)=0 \text{ for } i>0,~\text{and}~ H^0(X)\in\mathscr{T}\},\\
\ct^{\prime\geq 0}&=\{X\in\ct\mid H^i(X)=0 \text{ for } i<-1,~\text{and}~H^{-1}(X)\in\mathscr{F}\}.
\end{align*}
Then $(\mathcal{T}^{\prime\leq 0}, \mathcal{T}^{\prime\geq 0})$ is a $t$-structure on $\ct$ with heart $\cb=(\Sigma\mathscr{F})\ast \mathscr{T}$, and $(\Sigma\mathscr{F},\mathscr{T})$ is a torsion pair of $\cb$.
It is clearly intermediate with respect to the given $t$-structure $(\ct^{\leq 0},\ct^{\geq 0})$. It is called the \emph{Happel--Reiten--Smal\o~  
tilt} of $(\ct^{\leq 0},\ct^{\geq 0})$ at the torsion pair $(\mathscr{T},\mathscr{F})$.
\end{construction}

\begin{proposition}[{\cite[Chapter I, Theorem 3.1]{BeligiannisReiten07}, \cite[Proposition 2.1]{Woolf10}}]
\label{prop:bijection-between-int-t-str-and-torsion-pairs}
Let $(\mathcal{T}^{\leq 0},\mathcal{T}^{\geq 0})$ be a bounded $t$-structure on $\mathcal{T}$ with heart $\mathcal{A}$. Then taking the HRS tilt defines a bijective map from the set of torsion pairs of $\mathcal{A}$ to the set of intermediate $t$-structures on $\ct$ with respect to $(\mathcal{T}^{\leq 0},\mathcal{T}^{\geq 0})$. Its inverse takes $(\mathcal{T}^{\prime \leq 0},\mathcal{T}^{\prime \geq 0})$ to $(\mathcal{T}^{\prime \leq 0}\cap \mathcal{A},\mathcal{T}^{\prime \geq 1}\cap \mathcal{A})$.
\end{proposition}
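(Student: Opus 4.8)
The plan is to check that the two maps described in the statement—taking the HRS tilt of a torsion pair, and intersecting an intermediate $t$-structure with the heart $\ca$—are mutually inverse bijections. Construction~\ref{constr:HRS-tilt} already supplies one direction: it tells us that for any torsion pair $(\mathscr{T},\mathscr{F})$ of $\ca$, the pair $(\ct^{\prime\leq 0},\ct^{\prime\geq 0})$ defined there is a $t$-structure which is intermediate with respect to $(\ct^{\leq 0},\ct^{\geq 0})$, and that $(\ct^{\leq 0},\ct^{\geq 0})$ bounded forces this tilt to be bounded as well. So the HRS-tilt assignment indeed lands in the set of intermediate $t$-structures, and I only need to verify that the second assignment is well-defined and that the two compositions are the identity.

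First I would check that $(\ct^{\prime\leq0}\cap\ca,\ct^{\prime\geq1}\cap\ca)$ is genuinely a torsion pair of $\ca$ for any intermediate $t$-structure $(\ct^{\prime\leq0},\ct^{\prime\geq0})$. Write $\mathscr{T}'=\ct^{\prime\leq0}\cap\ca$ and $\mathscr{F}'=\ct^{\prime\geq1}\cap\ca$. The Hom-vanishing $\Hom_\ct(X,Y)=0$ for $X\in\ct^{\prime\leq0}$, $Y\in\ct^{\prime\geq1}$ restricts to give $\Hom_\ca(\mathscr{T}',\mathscr{F}')=0$. For the decomposition axiom, take $M\in\ca$ and apply the truncation triangle $\sigma^{\prime\leq0}M\to M\to\sigma^{\prime\geq1}M\to\Sigma\sigma^{\prime\leq0}M$ of the primed $t$-structure. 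Using the intermediacy inclusions $\ct^{\leq-1}\subseteq\ct^{\prime\leq0}\subseteq\ct^{\leq0}$ (equivalently $\ct^{\geq0}\subseteq\ct^{\prime\geq0}\subseteq\ct^{\geq-1}$) together with $M\in\ca=\ct^{\leq0}\cap\ct^{\geq0}$, one argues by applying $H^i$ (with respect to the original $t$-structure, which is cohomological) to this triangle that $\sigma^{\prime\leq0}M$ and $\sigma^{\prime\geq1}M$ both lie in $\ca$: the outer terms force all cohomology of $\sigma^{\prime\leq0}M$ into degrees $\{0\}$ and all cohomology of $\sigma^{\prime\geq1}M$ into degrees $\{0\}$ as well, after a short diagram chase on the long exact sequence. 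Hence the truncation triangle becomes a short exact sequence $0\to\sigma^{\prime\leq0}M\to M\to\sigma^{\prime\geq1}M\to0$ in $\ca$ with the left term in $\mathscr{T}'$ and the right in $\mathscr{F}'$, and closure under the appropriate extensions/quotients and sub-objects follows formally.

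Next I would verify the two round trips. Starting from a torsion pair $(\mathscr{T},\mathscr{F})$, form its HRS tilt $(\ct^{\prime\leq0},\ct^{\prime\geq0})$ as in Construction~\ref{constr:HRS-tilt} and compute $\ct^{\prime\leq0}\cap\ca$: an object $M\in\ca$ has $H^i(M)=0$ for $i\neq0$ and $H^0(M)=M$, so the defining condition of $\ct^{\prime\leq0}$ reduces to $M\in\mathscr{T}$; thus $\ct^{\prime\leq0}\cap\ca=\mathscr{T}$, and dually $\ct^{\prime\geq1}\cap\ca=\mathscr{F}$ using $\ct^{\prime\geq1}=\Sigma^{-1}\ct^{\prime\geq0}$ and the description of $\ct^{\prime\geq0}$. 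Conversely, starting from an intermediate $t$-structure $(\ct^{\prime\leq0},\ct^{\prime\geq0})$, set $\mathscr{T}'=\ct^{\prime\leq0}\cap\ca$, $\mathscr{F}'=\ct^{\prime\geq1}\cap\ca$, and let $(\ct^{\prime\prime\leq0},\ct^{\prime\prime\geq0})$ be the HRS tilt at $(\mathscr{T}',\mathscr{F}')$. One shows $\ct^{\prime\prime\leq0}=\ct^{\prime\leq0}$ by a two-sided containment: given $X\in\ct^{\prime\leq0}$, intermediacy puts $X\in\ct^{\leq0}$ so $H^i(X)=0$ for $i>0$, and applying $\sigma^{\geq0}$ (original) plus the long exact cohomology sequence identifies $H^0(X)$ with an object of $\ct^{\prime\leq0}\cap\ca=\mathscr{T}'$, whence $X\in\ct^{\prime\prime\leq0}$; the reverse containment is the symmetric argument using that $H^0(X)\in\mathscr{T}'\subseteq\ct^{\prime\leq0}$ and $\sigma^{\leq-1}X\in\ct^{\leq-1}\subseteq\ct^{\prime\leq0}$, so $X\in\ct^{\prime\leq0}*\ct^{\prime\leq0}=\ct^{\prime\leq0}$. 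Since a $t$-structure is determined by its aisle, this gives $(\ct^{\prime\prime\leq0},\ct^{\prime\prime\geq0})=(\ct^{\prime\leq0},\ct^{\prime\geq0})$, completing the proof.

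I expect the main obstacle to be the bookkeeping in the step showing that the truncation functors of the primed $t$-structure preserve the heart $\ca$ when applied to objects of $\ca$; this is where the intermediacy hypothesis is used essentially, and one must be careful to run the long exact sequence of the original cohomological functor $H^i$ against the primed truncation triangle and pin down exactly which cohomology groups vanish. Everything else is formal manipulation of $t$-structure axioms and the definitions in Construction~\ref{constr:HRS-tilt}. A reference-based shortcut is of course available, since the statement is quoted from \cite{BeligiannisReiten07} and \cite{Woolf10}, but the argument above is short enough to include directly.
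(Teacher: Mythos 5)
The paper does not prove this proposition; it is cited directly from \cite{BeligiannisReiten07} and \cite{Woolf10}, so there is no in-paper argument to compare yours against. Your self-contained proof is correct and is essentially the standard argument: HRS tilting provides the forward map (already supplied by Construction~\ref{constr:HRS-tilt}), the reverse map is well-defined because the original cohomological functor $H^i$ applied to the primed truncation triangle of an object $M\in\ca$ forces both truncation pieces $\sigma'^{\leq 0}M$ and $\sigma'^{\geq 1}M$ into $\ca$ via intermediacy, and the two round trips are then checked by unwinding the definitions. One small point worth tightening: in the verification that $\ct'^{\leq 0}\subseteq\ct''^{\leq 0}$, the cleanest way to see $H^0(X)\in\ct'^{\leq 0}$ for $X\in\ct'^{\leq 0}$ is not really via a long exact sequence but via the standard truncation triangle $\sigma^{\leq -1}X\to X\to H^0(X)\to\Sigma\sigma^{\leq -1}X$, which exhibits $H^0(X)\in\{X\}*\{\Sigma\sigma^{\leq -1}X\}\subseteq\ct'^{\leq 0}*\ct'^{\leq 0}=\ct'^{\leq 0}$ using $\Sigma\sigma^{\leq -1}X\in\ct^{\leq -2}\subseteq\ct'^{\leq 0}$; this matches what you do in the reverse containment and makes the two directions genuinely symmetric, as you say.
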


\subsection{Derived categories of dg algebras}
A \emph{dg $k$-algebra} $A$ is a graded $k$-algebra with a differential $d$ which makes $A$ a complex of $k$-modules such that the multiplication $A \otimes_{k} A \rightarrow A$ is a chain map, \ie the graded Leibniz rule holds:
\[
d(ab) = d(a)b + (-1)^{|a|}ad(b),
\]
where $a$ and $b$ are homogeneous elements of $A$ and $|a|$ denotes the degree of $a$. For two dg $k$-algebras $A$
and $B$, a dg algebra homomorphism $f\colon A \rightarrow B $ is a $k$-algebra homomorphism which is a chain map.

We follow \cite{Keller94,Keller06d} to introduce the derived category of a dg algebra. Let $A$ be a dg $k$-algebra. For two dg $A$-modules $M$ and $N$, the morphism complex $\cHom_{A}(M,N)$ is the complex  whose degree $i$ component consists of the homogeneous $A$-module homomorphisms from $M$ to $N$ of degree $i$, and whose differential is given by
\[
d(f) = d_{N}\circ f- (-1)^{|f|}f \circ d_{M},
\]
where $f \in \cHom_{A}(M, N)$ is homogeneous. We call $\cEnd_A(M):=\cHom_A(M,M)$ the \emph{dg endomorphism algebra} of $M$. Let $\mathcal{C}_{dg}(A)$ be the dg category of dg $A$-modules: its objects are dg $A$-modules, and for dg $A$-modules $M$ and $N$ we have $\Hom_{\mathcal{C}_{dg}(A)}(M,N)=\cHom_A(M,N)$. Let $K(A)=H^{0}\mathcal{C}_{dg}(A)$ be the homotopy category of $\mathcal{C}_{dg}(A)$: its objects are dg $A$-modules, and for two dg $A$-modules $M$ and $N$ we have
\[
\Hom_{K(A)}(M,N):=H^{0}\Hom_{\mathcal{C}_{dg}(A)}(M,N)=H^0\cHom_A(M,N).
\]
The \emph{derived category} $\mathcal{D}(A)$ of dg $A$-modules is defined as the triangle quotient of $K(A)$ by the subcategory of acyclic dg $A$-modules. A dg $A$-module $P$ is said to be \emph{$\ck$-projective} if $\cHom_A(P,M)$ is acyclic whenever $M$ is an acyclic dg $A$-module.  If $P$ is a $\ck$-projective dg $A$-module and $M$ is any dg $A$-module, then
\begin{align}\label{eq:morphisms-in-derived-category}
\Hom_{\cd(A)}(P,\Sigma^i M)=\Hom_{K(A)}(P,\Sigma^i M)
\end{align}
for any $i\in\mathbb{Z}$.
For example, $A_A$ is $\ck$-projective. Thus we have
\begin{align}\label{eq:morphisms-vs-cohomologies}
\Hom_{\cd(A)}(A,\Sigma^i M)=\Hom_{K(A)}(A,\Sigma^i M)=H^i(M).
\end{align}
In particular, by Auslander's projectivisation, there is a $k$-linear equivalence
\begin{align}
\label{eq:projectivisation}
H^0=\Hom_{\cd(A)}(A,?)\colon \add_{\cd(A)}(A_A)\to \proj H^0(A).
\end{align}
By \cite[Theorem 3.1]{Keller94}, for any dg $A$-module $M$ there is a $\ck$-projective dg module $P_M$ together with a quasi-isomorphism $P_M\to M$ of dg $A$-modules.

\medskip

Denote by $\cd^{b}(A)$ the full subcategory of $\mathcal{D}(A)$ whose objects are the dg $A$-modules with bounded total cohomology. Let $\per(A) = \thick_{\mathcal{D}(A)}(A_{A})$ be the
thick subcategory of $\mathcal{D}(A)$ generated by $A_{A}$. In the case $A$ is a $k$-algebra (viewed as a dg $k$-algebra concentrated in degree $0$), a dg $A$-module is exactly a complex of $A$-modules, we have $\mathcal{D}(A)=\mathcal{D}(\Mod A)$, and there are canonical equivalences $K^{b}(\proj A)\simeq\per(A)$ and $\mathcal{D}^{b}(\Mod A)\simeq\mathcal{D}^{b}(A)$. The first restricted equivalence in the following lemma is well-known, since $\per(A)$ is exactly the subcategory of $\cd(A)$ of compact objects (\cite[Theorem 5.3]{Keller94}).

\begin{lemma}
\label{lem:restriction-of-triangle-equivalence-large-module-level}
Let $A$ and $B$ be two dg $k$-algebras and $F\colon\cd(B)\to\cd(A)$ be a triangle equivalence. Then $F$ restricts to triangle equivalences
 $\per(B)\rightarrow \per(A)$ and $\mathcal{D}^{b}(B)\rightarrow \mathcal{D}^{b}(A)$.
\end{lemma}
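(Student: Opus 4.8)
\textbf{Proof proposal for Lemma~\ref{lem:restriction-of-triangle-equivalence-large-module-level}.}
The plan is to identify each of the two subcategories $\per(B)\subseteq\cd(B)$ and $\cd^b(B)\subseteq\cd(B)$ by an intrinsic, categorical property that is automatically preserved by any triangle equivalence, and then to invoke that $F$ and its quasi-inverse both transport such properties. For $\per$ this is immediate from the characterisation already recalled in the text: $\per(A)$ is exactly the subcategory of compact objects of $\cd(A)$ (\cite[Theorem 5.3]{Keller94}), and compactness — the commutation of $\Hom_{\cd(A)}(X,-)$ with set-indexed coproducts — is a property expressed purely in terms of the triangulated structure and existing coproducts, hence is preserved by the triangle equivalence $F$ in both directions. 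So $F$ restricts to a triangle equivalence $\per(B)\to\per(A)$ with quasi-inverse the restriction of a quasi-inverse of $F$.

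For the bounded derived category $\cd^b$ one needs a similarly intrinsic description, and here the natural device is to characterise $\cd^b(A)$ inside $\cd(A)$ as those objects $X$ for which $\Hom_{\cd(A)}(P,\Sigma^i X)=0$ for $|i|\gg 0$ for every $P\in\per(A)$; equivalently, since $\per(A)=\thick_{\cd(A)}(A)$ is generated by $A$, it suffices to test with $P=A$, and by \eqref{eq:morphisms-vs-cohomologies} this says precisely $H^i(X)=0$ for $|i|\gg 0$, i.e.\ $X$ has bounded total cohomology. First I would record this reformulation: $X\in\cd^b(A)$ if and only if $\Hom_{\cd(A)}(A,\Sigma^i X)=0$ for $|i|\gg 0$. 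Then, using the already-established fact that $F$ restricts to an equivalence $\per(B)\to\per(A)$, so that $F$ sends a compact generator of $\cd(B)$ to a compact generator of $\cd(A)$, one transports the vanishing condition: for $X\in\cd(B)$ one has $\Hom_{\cd(A)}(FA_B,\Sigma^i FX)\cong\Hom_{\cd(B)}(A_B,\Sigma^i X)$, and since $FA_B\in\per(A)=\thick(A_A)$ the left-hand side vanishes for $|i|\gg 0$ iff $\Hom_{\cd(A)}(A_A,\Sigma^i FX)$ does (the class of objects $P$ with this finiteness property is a thick subcategory containing $FA_B$, hence contains $A_A$, and symmetrically). Therefore $X\in\cd^b(B)$ if and only if $FX\in\cd^b(A)$, which gives the desired restricted equivalence, again with quasi-inverse coming from a quasi-inverse of $F$.

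The main obstacle, such as it is, is the bounded case: one must be slightly careful that "bounded total cohomology" is detected by $\Hom$ out of $A$ in \emph{both} cohomological directions simultaneously, and that this detection is stable under the thick-subcategory argument used to pass from a compact generator to $A$ itself. The cleanest way to organise this is to prove, as a small sublemma, that for any object $Y\in\cd(A)$ the full subcategory of $\per(A)$ consisting of those $P$ with $\Hom_{\cd(A)}(P,\Sigma^i Y)=0$ for $|i|\gg 0$ is a thick subcategory of $\per(A)$; since it contains $A_A$ it equals $\per(A)$, and in particular contains $FA_B$. Running this for $Y=FX$ and comparing with the analogous statement in $\cd(B)$ for $Y=X$ via the adjunction-free natural isomorphism $\Hom_{\cd(A)}(FP,\Sigma^i FX)\cong\Hom_{\cd(B)}(P,\Sigma^i X)$ closes the argument. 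No delicate calculation is needed; everything reduces to the compact-generator description of $\per$ already cited and the elementary formula \eqref{eq:morphisms-vs-cohomologies}.
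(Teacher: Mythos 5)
Your proposal is correct and follows essentially the same route as the paper: both identify $\per$ intrinsically as the compact objects, characterise $\cd^b$ by the vanishing of $\Hom_{\cd(A)}(P,\Sigma^i -)$ for $|i|\gg 0$ over all $P\in\per(A)$, and establish that characterisation by dévissage starting from $A_A$ using \eqref{eq:morphisms-vs-cohomologies}. The only cosmetic difference is that you phrase the transport explicitly in terms of $FA_B$, whereas the paper states the intrinsic description once and lets the restricted equivalence on $\per$ do the rest; these are the same argument.
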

\begin{proof}
We claim that
\[
\cd^b(A)=\{M\in\cd(A)\mid\Hom_{\cd(A)}(P,\Sigma^i M)=0~~\forall i\in\mathbb{Z} \text{ with }|i|\gg 0~~\forall P\in\per(A)\}.
\]
Then the equivalence on $\cd^b$ follows from that on $\per$. 
In the above claim the inclusion `$\supseteq$' is obvious by \eqref{eq:morphisms-vs-cohomologies} since $A_A\in\per(A)$. For the inverse inclusion we apply d\'evissage. Let 
\[
\cs=\{P\in\cd(A)\mid \Hom_{\cd(A)}(P,\Sigma^i M)=0~~\forall i\in\mathbb{Z} \text{ with }|i|\gg 0~~\forall M\in\cd^b(A)\}.
\] 
Again by \eqref{eq:morphisms-vs-cohomologies} we have $A_A\in\cs$. Moreover, it is easily seen that $\cs$ is closed under taking shifts, extensions and direct summands. Therefore $\per(A)\subseteq\cs$ and the inverse inclusion follows.
\end{proof}

\subsection{Derived equivalences}
Let $A$ and $B$ be two dg $k$-algebras and $X$ be a dg $B$-$A$-bimodule. Then there is an adjoint pair of triangle functors
\[
\xymatrix@C=5pc{
\cd(A)\ar@<-.7ex>[r]_{\RHom_A(X,?)} & \cd(B).\ar@<-.7ex>[l]_{?\lten_B X}
}
\]
Also there is a dg $k$-algebra homomorphism $\rho\colon B\rightarrow \cEnd_{A}(X)$, $b\mapsto (x\mapsto bx)$, which we call the \emph{representation map} of $X$. The following is the one-object version of \cite[Lemma 6.1 (a)]{Keller94}.

\begin{lemma}
\label{lem:when-tensor-is-equivalence} 
Suppose that $X_A$ is $\ck$-projective over $A$\footnote{In \cite[Lemma 6.1 (a) and Lemma 6.2]{Keller94}, the assumption on $X_A$ is to have the property (P) (\ie to be cofibrant in the terminology of \cite[Section 3.2]{Keller06d}). But the proofs still work if we make the slightly weaker assumption that $X_A$ is $\ck$-projective.}. Then the derived tensor functor $?\lten_B X\colon \mathcal{D}(B)\rightarrow \mathcal{D}(A)$ is fully faithful if and only if $X_A$ is self-compact in $\cd(A)$ and $\rho$ is a quasi-isomorphism; it is a triangle equivalence if and only if $X_A$ is a compact generator of $\mathcal{D}(A)$ {\rm (}\ie $\thick_{\cd(A)}(X_A)=\per(A)${\rm )} and $\rho$ is a quasi-isomorphism. 
\end{lemma}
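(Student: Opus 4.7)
The plan is to analyze the unit $\eta\colon \id_{\cd(B)}\to\RHom_A(X,?\lten_B X)$ of the given adjunction and carry out a dévissage argument. First I would use the $\ck$-projectivity of $X_A$ to compute the derived functors on $X$ naively: $B\lten_B X\simeq X$ and $\RHom_A(X,X)\simeq \cEnd_A(X)$. Under these identifications, the unit component $\eta_B\colon B\to \RHom_A(X, B\lten_B X)$ becomes exactly the representation map $\rho\colon B\to\cEnd_A(X)$.

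For the first equivalence, write $F=?\lten_B X$ and consider
\[
\cl=\{M\in\cd(B)\mid \eta_M\text{ is an isomorphism in }\cd(B)\}.
\]
This is a triangulated subcategory closed under direct summands, and contains $B$ precisely when $\rho$ is a quasi-isomorphism; in that case $\cl\supseteq\thick(B)=\per(B)$. Since $F$ is a left adjoint it preserves coproducts, so $\cl$ is closed under arbitrary coproducts if and only if $\RHom_A(X,?)$ preserves coproducts of objects in the essential image of $F$, equivalently of objects in the localizing subcategory of $\cd(A)$ generated by $X_A$; this is exactly the self-compactness condition on $X_A$. Assuming both conditions, $\cl=\cd(B)$, because $B$ generates $\cd(B)$ as a localizing subcategory. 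Conversely, if $F$ is fully faithful then $\eta_B=\rho$ is a quasi-isomorphism, and for any family $(M_i)$ in $\cd(B)$,
\[
\Hom_{\cd(A)}\bigl(X,\textstyle\coprod_i F(M_i)\bigr)\cong\Hom_{\cd(B)}\bigl(B,\textstyle\coprod_i M_i\bigr)\cong\textstyle\coprod_i\Hom_{\cd(A)}(X,F(M_i));
\]
since the essential image of $F$ is closed under coproducts, shifts and cones and contains $X=F(B)$, it coincides with the localizing subcategory generated by $X$, and the displayed isomorphism says exactly that $X_A$ is self-compact.

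For the second equivalence, a triangle equivalence $F$ sends the compact generator $B\in\cd(B)$ to the compact generator $X=F(B)$ of $\cd(A)$, and $\rho$ is a quasi-isomorphism by the first part. Conversely, a compact generator is in particular self-compact, so the first part gives $F$ fully faithful; its essential image, being the localizing subcategory generated by $X$, equals $\cd(A)$ precisely when $X_A$ generates. The main obstacle is isolating self-compactness as the precise condition that allows dévissage to push $\cl$ from $\per(B)$ to all of $\cd(B)$; once this is pinned down, the remainder is routine bookkeeping with adjunctions, compact objects, and localizing subcategories.
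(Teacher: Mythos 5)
Your argument is correct and is essentially the standard one: the paper gives no proof of this lemma but defers to \cite[Lemma 6.1(a)]{Keller94}, whose proof is exactly your d\'evissage on the adjunction unit, with $\eta_B$ identified with $\rho$ via the $\ck$-projectivity of $X_A$ and self-compactness supplying closure of $\cl$ under coproducts. The only point to make explicit is that in the ``compact generator $\Rightarrow$ equivalence'' direction one passes from $\thick_{\cd(A)}(X)=\per(A)$ to $\mathrm{Loc}(X)=\cd(A)$, which is immediate since $A\in\per(A)\subseteq\mathrm{Loc}(X)$.
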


\begin{corollary}
Let $M$ be a $\ck$-projective dg $A$-module which is a compact generator of $\cd(A)$. Then $?\lten_{\cEnd_A(M)}M\colon\cd(\cEnd_A(M))\to \cd(A)$ is a triangle equivalence.
\end{corollary}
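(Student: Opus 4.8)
The plan is to deduce the corollary directly from Lemma~\ref{lem:when-tensor-is-equivalence}. Set $B=\cEnd_A(M)$ and let $X$ be the dg $B$-$A$-bimodule given by $M$ itself, where the left $B$-action is evaluation of endomorphisms on $M$ and the right $A$-action is the original module structure. With this choice, the triangle functor $?\lten_B X\colon\cd(B)\to\cd(A)$ of the lemma is precisely $?\lten_{\cEnd_A(M)}M$.

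First I would check that the hypotheses of the lemma are satisfied. By assumption $X_A=M$ is $\ck$-projective over $A$, which is exactly the standing hypothesis in Lemma~\ref{lem:when-tensor-is-equivalence} (or rather its slightly weaker footnoted form). Also by assumption $M$ is a compact generator of $\cd(A)$, i.e.\ $\thick_{\cd(A)}(M)=\per(A)$; this is the first of the two conditions the lemma requires for $?\lten_B X$ to be a triangle equivalence. It remains to identify the representation map $\rho\colon B\to\cEnd_A(X)$. But here $B$ was \emph{defined} to be $\cEnd_A(M)=\cEnd_A(X)$, and $\rho$ is the identity map, which is trivially a quasi-isomorphism. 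Hence both conditions of Lemma~\ref{lem:when-tensor-is-equivalence} hold and the functor $?\lten_{\cEnd_A(M)}M\colon\cd(\cEnd_A(M))\to\cd(A)$ is a triangle equivalence.

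There is essentially no obstacle: the only point requiring a moment's care is the bookkeeping that the left $B$-module structure on $X=M$ really does make $\rho$ the identity (rather than some other isomorphism), which is immediate from the formula $\rho(b)=(x\mapsto bx)$ for the representation map and from the definition of the $B$-action on $M$ as $b\cdot x=b(x)$. One should also note in passing that $\ck$-projectivity of $X_A$ is what makes $\cEnd_A(X)$ compute the derived endomorphism dg algebra correctly, so that $B$ really is the "right" algebra to take derived modules over; but since the lemma is stated precisely under this hypothesis, nothing further is needed.
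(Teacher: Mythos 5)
Your proposal is correct and follows exactly the paper's own argument: apply Lemma~\ref{lem:when-tensor-is-equivalence} with $X=M$ viewed as a $\cEnd_A(M)$-$A$-bimodule, note $\ck$-projectivity and compact generation hold by hypothesis, and observe that the representation map is the identity, hence a quasi-isomorphism. Nothing to add.
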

\begin{proof}
This follows immediately from Lemma~\ref{lem:when-tensor-is-equivalence} because in this case the representation map is the identity map of $\cEnd_A(M)$.
\end{proof}

The following is the one-object version of \cite[Lemma 6.2 (b)]{Keller94}.

\begin{lemma}
\label{lem:turning-Hom-to-tensor}
Assume that $X_A$ is $\ck$-projective over $A$. If $?\lten_B X\colon\cd(B)\to\cd(A)$ is a triangle equivalence, then there is a natural isomorphism of triangle functors $\RHom_A(X,?)\cong ~?\lten_A X^{tr}$, where $X^{tr}=\cHom_A(X,A)$.
\end{lemma}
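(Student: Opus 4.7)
My plan is to construct a natural transformation from $? \lten_A X^{tr}$ to $\RHom_A(X,?)$ and then check it is an isomorphism by a dévissage argument based on compactness.

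First, I would equip $X^{tr} = \cHom_A(X, A)$ with the natural structure of a dg $A$-$B$-bimodule: the left $A$-action comes from post-composition with left multiplication on $A$, while the right $B$-action is given by $(\phi \cdot b)(x) = \phi(bx)$, where the left $B$-action on $X$ is used. Thus $? \lten_A X^{tr}$ is a well-defined triangle functor $\cd(A)\to\cd(B)$. At the level of dg modules I would then write down the canonical evaluation map
\[
\eta_M\colon M\otimes_A X^{tr}\longrightarrow \cHom_A(X,M),\qquad m\otimes \phi \mapsto \bigl(x\mapsto m\phi(x)\bigr),
\]
which is a natural morphism of right dg $B$-modules. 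Because $X_A$ is $\ck$-projective over $A$, the right-hand side already computes $\RHom_A(X, M)$; after passing to a $\ck$-projective (or $\ck$-flat) replacement on the left, $\eta$ becomes a natural transformation of triangle functors $? \lten_A X^{tr} \to \RHom_A(X,?)$.

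Next I would check that $\eta_A$ is an isomorphism. Since $A_A$ is $\ck$-projective, no replacement is needed on either side, both functors evaluate to $X^{tr}=\cHom_A(X,A)$, and a direct inspection shows that $\eta_A$ is the identity.

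The heart of the argument is a dévissage. The functor $? \lten_A X^{tr}$ commutes with arbitrary set-indexed coproducts. By Lemma~\ref{lem:when-tensor-is-equivalence}, the hypothesis that $? \lten_B X$ is a triangle equivalence forces $X_A$ to be a compact generator of $\cd(A)$; hence $\RHom_A(X,?) \cong \cHom_A(X,?)$ also commutes with arbitrary coproducts. The full subcategory
\[
\cs = \{\, M \in \cd(A) \mid \eta_M \text{ is an isomorphism}\,\}
\]
is then closed under shifts, extensions, direct summands and arbitrary coproducts, and contains $A_A$. Since $A_A$ generates $\cd(A)$ as a localizing subcategory, we obtain $\cs = \cd(A)$, i.e.\ $\eta$ is a natural isomorphism. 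The one technical point requiring care is the bookkeeping of the $A$-$B$-bimodule structure on $X^{tr}$ together with the choice of resolutions needed to derive $? \lten_A X^{tr}$; once these are pinned down the dévissage is routine.
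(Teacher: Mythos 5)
Your argument is correct. Note, however, that the paper does not actually prove this lemma: it simply cites it as the one-object version of Lemma~6.2(b) of Keller's \emph{Deriving DG categories}, with a footnote explaining that Keller's proof goes through under the weaker hypothesis that $X_A$ is $\ck$-projective rather than cofibrant. Your write-up therefore supplies a self-contained proof where the paper outsources one, and it follows the standard route: the evaluation map $\eta_M\colon M\otimes_A X^{tr}\to\cHom_A(X,M)$, the observation that $\eta_A$ is the identity, and a d\'evissage over the localizing subcategory generated by $A_A$, using that $X_A$ is compact (extracted from the equivalence hypothesis via Lemma~\ref{lem:when-tensor-is-equivalence}) so that $\RHom_A(X,?)\cong\cHom_A(X,?)$ commutes with coproducts. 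All the steps check out; the only points needing the care you already flag are the sign conventions making $\eta_M$ a chain map of dg $B$-modules and the compatibility of $\eta$ with the $\ck$-flat replacement used to derive the tensor functor. It is perhaps worth observing that your proof uses only the compactness of $X_A$, not the full strength of the hypothesis that $?\lten_B X$ is an equivalence.
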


Let $A$ and $B$ be two dg $k$-algebras and  $f\colon B \to A$ be a homomorphism of dg algebras. Then we can view $A$ as a dg $B$-$A$-bimodule via $f$, whose representation map is exactly $f$. There is the induction functor $f_*=~?\lten_B A\colon\cd(B)\to\cd(A)$ and the restriction functor $f^*=\RHom_{A}(A,?)\colon\cd(A)\to\cd(B)$, which form an adjoint pair of triangle functors. Moreover, the following diagram commutes
\begin{align}
\label{eq:inducting-free-modules}
\xymatrix{
\add_{\cd(B)}(B_B)\ar[r]^{H^0} \ar[d]^{f_*} & \proj H^0(B)\ar[d]^{(H^0f)_*}\\
\add_{\cd(A)}(A_A)\ar[r]^{H^0} & \proj H^0(A).
}
\end{align}

\begin{corollary}
\label{cor:when-pushing-out-is-an-equivalence}
$f_*\colon\cd(B)\to\cd(A)$ is a triangle equivalence if and only if $f$ is a quasi-isomorphism.
\end{corollary}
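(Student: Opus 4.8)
The plan is to deduce this from Lemma~\ref{lem:when-tensor-is-equivalence} applied to the bimodule $X={}_BA_A$ associated with $f$, for which the representation map $\rho$ is exactly $f$. First I would observe that ${}_AA_A$ is $\ck$-projective over $A$ (as noted after \eqref{eq:morphisms-in-derived-category}), so the hypothesis of Lemma~\ref{lem:when-tensor-is-equivalence} is met and $f_*=~?\lten_B A$ is the relevant derived tensor functor. By that lemma, $f_*$ is a triangle equivalence if and only if $A_A$ is a compact generator of $\cd(A)$ \emph{and} $\rho=f$ is a quasi-isomorphism. Since $A_A$ always satisfies $\thick_{\cd(A)}(A_A)=\per(A)$ by the very definition of $\per(A)$, and $A_A$ is compact, the compact-generator condition is automatic. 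Hence $f_*$ is a triangle equivalence if and only if $f$ is a quasi-isomorphism, which is the claim.

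For the remaining direction packaged in the ``if and only if'', note one subtlety: I should make sure the converse (equivalence $\Rightarrow$ $f$ quasi-isomorphism) really is covered. This is exactly the ``only if'' half of the biconditional in Lemma~\ref{lem:when-tensor-is-equivalence}: if $?\lten_B X$ is a triangle equivalence then in particular it is fully faithful, and the lemma asserts that full faithfulness (together with $\ck$-projectivity of $X_A$) forces $\rho$ to be a quasi-isomorphism. Applying this with $X=A$ and $\rho=f$ gives that $f$ is a quasi-isomorphism. So both directions follow directly.

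The only step requiring a word of justification is the identification of the data: that viewing $A$ as a dg $B$-$A$-bimodule via $f$ makes the representation map $\rho\colon B\to\cEnd_A(A)=A$ literally equal to $f$, and that under this identification the induction functor $f_*=~?\lten_B A$ coincides with the derived tensor functor $?\lten_B X$ appearing in Lemma~\ref{lem:when-tensor-is-equivalence}. Both of these are immediate from the definitions recalled just before the statement (the bimodule structure on $A$ via $f$, and $\rho(b)=(a\mapsto f(b)a)$), so there is essentially no obstacle here — this corollary is a direct specialisation. I would write the proof in two or three lines: cite Lemma~\ref{lem:when-tensor-is-equivalence} with $X=A$, note $\rho=f$, note $A_A$ is automatically a $\ck$-projective compact generator, and conclude.
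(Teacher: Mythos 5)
Your proof is correct and takes essentially the same approach as the paper, which also reduces the statement to Lemma~\ref{lem:when-tensor-is-equivalence} after observing that the representation map of the bimodule ${}_BA_A$ is exactly $f$. The extra verifications you spell out (that $A_A$ is $\ck$-projective and a compact generator, and that the lemma's biconditional yields both directions) are implicit in the paper's one-line argument.
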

\begin{proof}
This follows immediately from Lemma~\ref{lem:when-tensor-is-equivalence} because in this case the representation map is exactly $f$.
\end{proof}

\subsubsection{The truncated dg endomorphism algebra of a silting object} 
\label{sss:truncated-dg-endo-algebra-of-a-silting}
A dg $k$-algebra is said to be \emph{non-positive} if its components in positive degrees vanish. Such dg algebras are closely related to silting objects. Let $A$ be a dg $k$-algebra and $T$ be a silting object in $\per(A)$. For example, $A_A$ is a silting object in $\per(A)$ if $A$ is non-positive.  We assume further that $T$ is a $\ck$-projective dg $A$-module and construct a non-positive dg algebra from $T$ as follows. Recall that the degree $i$ component of $\cEnd_A(T)=\cHom_{A}(T,T)$ consists of those $A$-linear maps from $T$ to itself which are homogeneous maps of degree $i$. The differential of $\cEnd_A(T)$ takes a homogeneous map $f$ of degree $i$ to $d_T\circ f-(-1)^{i}f\circ d_T$. This differential and the composition of maps make $\cEnd_A(T)$ into a dg $k$-algebra. Since $T$ is a silting object, it follows from \eqref{eq:morphisms-in-derived-category} that $\cEnd_A(T)$ has cohomologies concentrated in non-positive degrees and $H^0\cEnd_A(T)=\End_{\cd(A)}(T)$. Therefore $B:=\sigma^{\leq 0}\cEnd_A(T)$ is a non-positive dg $k$-algebra and the embedding $B\to\cEnd_A(T)$ is a quasi-isomorphism of dg algebras, where $\sigma^{\leq 0}$ is the standard truncation at degree $0$. In this way $T$ becomes a dg $B$-$A$-bimodule with representation map the embedding $B\to\cEnd_A(T)$, so by Lemma~\ref{lem:when-tensor-is-equivalence}, the associated derived tensor functor $?\lten_{B}T\colon\cd(B)\to\cd(A)$ is a triangle equivalence with quasi-inverse $\RHom_A(T,?)=\cHom_A(T,?)\colon\cd(A)\to\cd(B)$. We will call this dg algebra $B$ the \emph{truncated dg endomorphism algebra} of $T$.

\smallskip
We assume further that $A$ is non-positive and that $T$ is \emph{$n$-term}, that is, it is $n$-term with respect to $A_A$. Let $\widetilde{S}=\cHom_A(T,\Sigma^{n-1} A)$. Then by Lemma~\ref{lem:n-term-silting}, $\Sigma^{n-1} A$ is a silting object of $\per(A)$ which is $n$-term with respect to $T$, and hence $\widetilde{S}$ is an $n$-term silting object of $\per(B)$. Moreover, $\widetilde{S}$ is naturally a dg $A$-$B$-bimodule, and by Lemma~\ref{lem:turning-Hom-to-tensor} we have isomorphisms of triangle equivalences:  
\[
\RHom_A(T,?)\cong \Sigma^{1-n}(?\lten_A \widetilde{S})~\text{and}~
?\lten_B T\cong\Sigma^{n-1}\RHom_B(\widetilde{S},?).
\] 

\subsection{Non-positive dg algebras: silting $t$-structures}
\label{ss:non-positive-dg-algebra}

Recall that a dg $k$-algebra $A$ is said to be non-positive if the degree $i$ component $A^{i}$ vanishes for all $i>0$. A $k$-algebra can be viewed as a non-positive dg $k$-algebra concentrated in degree $0$.

Let $A$ be a non-positive dg $k$-algebra. 
The canonical projection $p\colon A\rightarrow H^{0}(A)$ is a homomorphism of dg algebras. We view a module over $H^{0}(A)$ as a dg module over $A$ via this homomorphism. This defines a natural functor $\Mod H^{0}(A)\rightarrow \mathcal{D}(A)$, which is fully faithful. We will identify $\Mod H^{0}(A)$ with its essential image in $\mathcal{D}(A)$. Put
\begin{align*}
\mathcal{D}_\std^{\leq 0}&=\{X\in \mathcal{D}(A)\mid H^i(X)=0 ~\forall i>0\},\\
\mathcal{D}_\std^{\geq 0}&=\{X\in \mathcal{D}(A)\mid H^i(X)=0 ~\forall i<0\}.
\end{align*}

\begin{lemma}[{\cite[Lemma 2.2 and Proposition 2.3]{Amiot09}}]
\label{lem:standard-t-structure-on-unbounded-level}
The pair $(\mathcal{D}_\std^{\leq 0},\mathcal{D}_\std^{\geq 0})$ is a $t$-structure on $\mathcal{D}(A)$ and the functor $H^{0}\colon \mathcal{D}(A)\rightarrow \Mod H^{0}(A)$ of taking the 0-th cohomology restricts to an equivalence from the heart to $\Mod H^{0}(A)$, with the embedding $\Mod H^0(A)\to\cd(A)$ as a quasi-inverse. The associated truncation functors are the standard truncations.
\end{lemma}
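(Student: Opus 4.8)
The plan is to realise the two candidate subcategories by the \emph{degreewise standard truncations} of dg modules, which are available precisely because $A$ is non-positive. For a dg $A$-module $M$ with differential $d$, let $\tau^{\leq 0}M$ be the graded submodule with $(\tau^{\leq 0}M)^i=M^i$ for $i<0$, $(\tau^{\leq 0}M)^0=\ker(d^0\colon M^0\to M^1)$, and $(\tau^{\leq 0}M)^i=0$ for $i>0$, and put $\tau^{\geq 1}M=M/\tau^{\leq 0}M$. First I would check that $\tau^{\leq 0}M$ is a dg $A$-submodule: the only point that uses $A^{>0}=0$ is that in degree $0$ the right $A^0$-action preserves $\ker(d^0)$ (because $d(A^0)\subseteq A^1=0$) and that no element of negative degree is multiplied into degree $0$. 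The short exact sequence $0\to\tau^{\leq 0}M\to M\to\tau^{\geq 1}M\to 0$ of dg modules then yields a triangle in $\cd(A)$, and a one-line cohomology computation shows $\tau^{\leq 0}M\in\cd_\std^{\leq 0}$, $\tau^{\geq 1}M\in\cd_\std^{\geq 1}$, as well as the fact that the inclusion $\tau^{\leq 0}M\hookrightarrow M$ (resp.\ the projection $M\twoheadrightarrow\tau^{\geq 1}M$) is a quasi-isomorphism as soon as $M\in\cd_\std^{\leq 0}$ (resp.\ $M\in\cd_\std^{\geq 1}$). This settles axiom~(3) of a $t$-structure, axiom~(1) being trivial.

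For the orthogonality axiom~(2), take $X\in\cd_\std^{\leq 0}$ and $Y\in\cd_\std^{\geq 1}$. Using the quasi-isomorphisms of the first step I may assume $Y$ is a dg module concentrated in degrees $\geq 1$, and I would replace $X$ by a $\ck$-projective resolution $P\xrightarrow{\ \sim\ }\tau^{\leq 0}X$ with $P$ concentrated in degrees $\leq 0$ --- such a resolution exists for a cohomologically non-positive dg module over a non-positive dg algebra (a standard refinement of Keller's resolution construction, obtained by iteratively attaching free cells). Since $P$ is $\ck$-projective, $\Hom_{\cd(A)}(X,Y)=\Hom_{K(A)}(P,Y)=H^0\cHom_A(P,Y)$, and because $P$ lives in degrees $\leq 0$ while $Y$ lives in degrees $\geq 1$, the complex $\cHom_A(P,Y)$ vanishes in all degrees $\leq 0$; in particular its $0$-th cohomology is $0$. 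Hence $(\cd_\std^{\leq 0},\cd_\std^{\geq 0})$ is a $t$-structure, and by uniqueness of the truncation triangle its truncation functors are the functors $\tau^{\leq 0},\tau^{\geq 1}$ constructed above, i.e.\ the standard truncations.

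It remains to identify the heart $\mathcal{H}=\cd_\std^{\leq 0}\cap\cd_\std^{\geq 0}=\{M\mid H^i(M)=0\text{ for all }i\neq 0\}$. The functor $H^0$ takes values in $\Mod H^0(A)$, since the graded $H^{*}(A)$-module structure on $H^{*}(M)$ restricts to an $H^0(A)$-action on $H^0(M)$; the embedding $\iota\colon\Mod H^0(A)\to\cd(A)$ lands in $\mathcal{H}$ and manifestly satisfies $H^0\circ\iota\cong\id$. For the converse, given $M\in\mathcal{H}$ I would replace it by $\tau^{\leq 0}M$, so that $M$ is concentrated in degrees $\leq 0$ and the quotient map $M^0\twoheadrightarrow M^0/d(M^{-1})=H^0(M)$ is a quasi-isomorphism $M\twoheadrightarrow\iota(H^0(M))$ of dg $A$-modules --- the point that the target is a genuine module over $H^0(A)=A^0/d(A^{-1})$ again uses $M^{>0}=0$, which forces $d(A^{-1})$ to annihilate $M^0/d(M^{-1})$. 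This produces a natural isomorphism $\id_{\mathcal{H}}\cong\iota\circ(H^0|_{\mathcal{H}})$ in $\cd(A)$; together with $(H^0|_{\mathcal{H}})\circ\iota\cong\id$ it exhibits $\iota$ and $H^0|_{\mathcal{H}}$ as mutually quasi-inverse equivalences between $\Mod H^0(A)$ and $\mathcal{H}$, as claimed.

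The only genuinely non-formal ingredient is the existence, in the second step, of a $\ck$-projective resolution concentrated in non-positive degrees of a cohomologically non-positive dg module; this is where non-positivity of $A$ is indispensable, and I expect it to be the main obstacle. Everything else reduces to long exact cohomology sequences and bookkeeping with the explicit truncations $\tau^{\leq 0}$ and $\tau^{\geq 1}$.
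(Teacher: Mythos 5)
Your proof is correct and reproduces the standard argument; the paper gives no proof of this lemma but cites Amiot, whose argument is essentially the same. The one place you flag as potentially problematic --- the existence, over a non-positive dg algebra, of a $\ck$-projective resolution of a cohomologically non-positive dg module that is itself concentrated in degrees $\leq 0$ --- is indeed a standard refinement of Keller's cell-attachment construction: the free cells one attaches are shifts $\Sigma^{-d}A$ with $d\leq 0$, and non-positivity of $A$ keeps every such shift in degrees $\leq 0$. Alternatively, you can sidestep that resolution in axiom (2) altogether: after replacing $X$ by $\tau^{\leq 0}X$ and $Y$ by $\tau^{\geq 1}Y$, any morphism $X\to Y$ in $\cd(A)$ is represented by a left fraction $X\xleftarrow{\sim}Z\to Y$; since $H^{>0}(Z)\cong H^{>0}(X)=0$, the inclusion $\tau^{\leq 0}Z\hookrightarrow Z$ is a quasi-isomorphism, so one may shrink $Z$ to $\tau^{\leq 0}Z$, and then the degree-$0$ dg-module map $\tau^{\leq 0}Z\to Y$ is forced to vanish for degree reasons. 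Both routes are sound, and the rest of your argument --- that $\tau^{\leq 0}M$ is a dg submodule precisely because $A^{>0}=0$, the cohomology bookkeeping for axioms (1) and (3), and the zig-zag $M\leftarrow\tau^{\leq 0}M\to\iota(H^0(M))$ identifying the heart with $\Mod H^0(A)$ --- is exactly right.
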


Now let $T$ be a silting object in $\per(A)$ and put
\begin{align*}
\mathcal{D}_T^{\leq 0}&=\{X\in \mathcal{D}(A)\mid \Hom_{\cd(A)}(T,\Sigma^i X)=0 ~\forall i>0\},\\
\mathcal{D}_T^{\geq 0}&=\{X\in \mathcal{D}(A)\mid \Hom_{\cd(A)}(T,\Sigma^i X)=0 ~\forall i<0\}.
\end{align*}
Thanks to \eqref{eq:morphisms-vs-cohomologies}, we have $\cd_\std^{\leq 0}=\cd_A^{\leq 0}$ and $\cd_\std^{\geq 0}=\cd_A^{\geq 0}$. The first part of the following lemma is a special case of \cite[Theorem 1.3]{HoshinoKatoMiyachi02} (\confer also \cite[Chapter III, Proposition 2.8]{BeligiannisReiten07} and \cite[Corollary 4.7]{AiharaIyama12}). We include a direct proof.

\begin{lemma}
\label{lem:silting-t-structure-unbounded-level}
The pair $(\cd_T^{\leq 0},\cd_T^{\geq 0})$ is a $t$-structure on $\cd(A)$ and the Hom-functor $\Hom_{\cd(A)}(T,?)\colon\cd(A)\to \Mod\End_{\cd(A)}(T)$ restricts to an equivalence from the heart to $\Mod \End_{\cd(A)}(T)$. Moreover, it is $n$-intermediate with respect to the standard $t$-structure if and only if $T$ is $n$-term.
\end{lemma}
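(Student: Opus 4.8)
The plan is to verify the three $t$-structure axioms of Definition~\ref{2.2} for $(\cd_T^{\leq 0},\cd_T^{\geq 0})$ directly, using only the presilting condition $\Hom_{\cd(A)}(T,\Sigma^i T)=0$ for $i>0$ and the fact that $T$ generates $\per(A)$ as a thick subcategory. Axioms (1) and (2) are essentially formal. For (1), the inclusions $\cd_T^{\leq 0}\subseteq\cd_T^{\leq 1}$ and $\cd_T^{\geq 1}\subseteq\cd_T^{\geq 0}$ are immediate from the defining vanishing conditions once one rewrites $\cd_T^{\leq i}=\Sigma^{-i}\cd_T^{\leq 0}$ and $\cd_T^{\geq i}=\Sigma^{-i}\cd_T^{\geq 0}$. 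For (2), given $X\in\cd_T^{\leq 0}$ and $Y\in\cd_T^{\geq 1}$, one must show $\Hom_{\cd(A)}(X,Y)=0$; here I would use Proposition~\ref{prop:silting-->co-t-str} to write $X$ (which lies in $\per(A)=\thick(T)$ only after a further argument — see below) and reduce to the orthogonality built into the definitions. The genuinely substantive axiom is (3), the existence of truncation triangles $X\to Z\to Y\to\Sigma X$ with $X\in\cd_T^{\leq 0}$, $Y\in\cd_T^{\geq 1}$, for every $Z\in\cd(A)$.

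For axiom (3), the natural route is to transport the standard $t$-structure $(\cd_\std^{\leq 0},\cd_\std^{\geq 0})$ on $\cd(B)$ — where $B$ is the truncated dg endomorphism algebra of (a $\ck$-projective model of) $T$, which is non-positive by the discussion in \S\ref{sss:truncated-dg-endo-algebra-of-a-silting} — across the triangle equivalence $?\lten_B T\colon\cd(B)\to\cd(A)$ with quasi-inverse $\RHom_A(T,?)=\Hom_{\cd(A)}(T,?)$. Since $\Hom_{\cd(A)}(T,\Sigma^i Z)=\Hom_{\cd(B)}(B,\Sigma^i \RHom_A(T,Z))=H^i(\RHom_A(T,Z))$ by \eqref{eq:morphisms-vs-cohomologies}, the subcategory $\cd_T^{\leq 0}$ is exactly the preimage under $\RHom_A(T,?)$ of $\cd_\std^{\leq 0}\subseteq\cd(B)$, and similarly for $\cd_T^{\geq 0}$. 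A triangle equivalence transports a $t$-structure to a $t$-structure, so Lemma~\ref{lem:standard-t-structure-on-unbounded-level} gives axiom (3) (and re-proves (1), (2)) for free, and simultaneously identifies the heart: $\Hom_{\cd(A)}(T,?)\simeq H^0\circ\RHom_A(T,?)$ restricts to an equivalence from the heart of $(\cd_T^{\leq0},\cd_T^{\geq0})$ to $\Mod H^0(B)=\Mod\End_{\cd(A)}(T)$, by Lemma~\ref{lem:standard-t-structure-on-unbounded-level} applied to $B$. This settles the first two sentences of the lemma.

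It remains to prove the last sentence: the $t$-structure $(\cd_T^{\leq 0},\cd_T^{\geq 0})$ is $n$-intermediate with respect to the standard one if and only if $T$ is $n$-term with respect to $A_A$. Recall $\cd_\std^{\leq 0}=\cd_A^{\leq 0}$ and $\cd_\std^{\geq 0}=\cd_A^{\geq 0}$ by \eqref{eq:morphisms-vs-cohomologies}. By the definition of $n$-intermediate, one must show $\cd_A^{\leq -n+1}\subseteq\cd_T^{\leq 0}\subseteq\cd_A^{\leq 0}$, equivalently $\cd_A^{\geq 0}\subseteq\cd_T^{\geq 0}\subseteq\cd_A^{\geq -n+1}$. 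For the forward direction, assume $T$ is $n$-term, i.e.\ $T\in\add(A)*\Sigma\add(A)*\cdots*\Sigma^{n-1}\add(A)$; then for $X\in\cd_A^{\geq 0}$ and $i<0$, a dévissage on the filtration of $T$ together with the vanishing $\Hom_{\cd(A)}(\Sigma^j A,\Sigma^i X)=H^{i-j}(X)=0$ for $0\le j\le n-1$ (valid since $i-j<0$) shows $\Hom_{\cd(A)}(T,\Sigma^i X)=0$, so $X\in\cd_T^{\geq 0}$; the inclusion $\cd_T^{\geq 0}\subseteq\cd_A^{\geq -n+1}$ is obtained symmetrically, or by dualising using that $A_A$ is $n$-term with respect to $T$ up to shift. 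For the converse, assume the $t$-structure is $n$-intermediate; I would apply Lemma~\ref{lem:n-term-silting}, which reduces $n$-termness of $T$ with respect to $A$ to $n$-termness of $\Sigma^{n-1}A$ with respect to $T$, and then recover the latter from the containment of truncation functors forced by $n$-intermediacy — concretely, using that $A_A\in\cd_A^{\leq 0}\cap\cd_A^{\geq 0}$ together with $\cd_A^{\leq 0}\subseteq\cd_T^{\leq n-1}$ and the analogous lower bound to place $A$ in $\add(T)*\Sigma\add(T)*\cdots*\Sigma^{n-1}\add(T)$ after the shift. The main obstacle I anticipate is this converse implication: translating the abstract inclusions of aisles into a concrete filtration of $\Sigma^{n-1}A$ by shifts of $\add(T)$ requires knowing that objects in the relevant truncation window are built from $\add(T)$, which is where Proposition~\ref{prop:silting-->co-t-str} (the co-$t$-structure description of $\per(A)$ attached to the silting object $T$) enters and must be combined carefully with the $t$-structure truncations; keeping track of which shifts of $\add(T)$ appear, and in what order, is the delicate bookkeeping step.
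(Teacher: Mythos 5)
Your treatment of the first claim (existence of the $t$-structure and identification of the heart) agrees with the paper: both transport the standard $t$-structure on $\cd(B)$, where $B$ is the truncated dg endomorphism algebra of a $\ck$-projective model of $T$, across the equivalence $?\lten_B T$ and invoke Lemma~\ref{lem:standard-t-structure-on-unbounded-level}. Your forward direction of the second claim --- that $T$ being $n$-term implies $n$-intermediacy --- is also essentially the paper's argument: a d\'evissage on the filtration of $T$ (with, in your variant, one inclusion routed through Lemma~\ref{lem:n-term-silting}, while the paper d\'evissages on $T$ for both inclusions).

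The genuine gap is exactly where you flag it: the converse, $n$-intermediate implies $n$-term, is not completed. You propose to reduce via Lemma~\ref{lem:n-term-silting} to producing a filtration of $\Sigma^{n-1}A$ by $\add(T),\ldots,\Sigma^{n-1}\add(T)$ directly from the aisle inclusions, but it is not clear how to extract a concrete filtration from containments of truncation functors; Proposition~\ref{prop:silting-->co-t-str} describes $\per(A)$ as a union of co-$t$-structure windows attached to $T$, yet the aisles in question live in the full $\cd(A)$, and reconciling the two is not routine bookkeeping. The paper does not attempt this head-on converse. It instead proves the single-level statement: if $T$ is $n$-term but not $(n-1)$-term, then the $t$-structure is $n$-intermediate but not $(n-1)$-intermediate. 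The failure of $(n-1)$-intermediacy is exhibited by an explicit witness: take a triangle $T'\to T\to T''\to\Sigma T'$ with $T'\in\add(A)*\cdots*\Sigma^{n-2}\add(A)$ and $0\not\cong T''\in\Sigma^{n-1}\add(A)$, choose an $H^0(A)$-module $Y$ with $\Hom_{H^0(A)}(H^0(\Sigma^{-n+1}T''),Y)\neq 0$, put $X=\Sigma^{n-2}Y$, and compute $\Hom_{\cd(A)}(T,\Sigma X)\cong\Hom_{H^0(A)}(H^0(\Sigma^{-n+1}T''),Y)\neq 0$, so that $X\in\cd_\std^{\leq -n+2}\setminus\cd_T^{\leq 0}$. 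Since both $n$-termness and $n$-intermediacy are upward-closed in $n$, this single-level implication together with your forward direction yields the full equivalence. If you want to repair your plan, this witness construction is the missing step replacing the filtration argument you could not carry out.
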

\begin{proof}
Assume that $T$ is $\ck$-projective over $A$.
Let $B$ be the truncated dg endomorphism algebra of $T$ and recall from Section~\ref{sss:truncated-dg-endo-algebra-of-a-silting} that the derived tensor functor $?\lten_{B}T\colon\cd(B)\to\cd(A)$ is a triangle equivalence. It takes $B$ to $T$ and it takes the pair $(\cd_\std^{\leq 0}(B),\cd_\std^{\geq 0}(B))$ to the pair $(\cd_T^{\leq 0},\cd_T^{\geq 0})$. The former pair, by Lemma~\ref{lem:standard-t-structure-on-unbounded-level}, is a $t$-structure on $\cd(B)$ such that $H^0=\Hom_{\cd(B)}(B,?)\colon\cd(B)\to\Mod\End_{\cd(A)}(T)$ restricts to an equivalence from the heart to $\Mod\End_{\cd(A)}(T)$. The first statement follows immediately.

For the second statement, it suffices to show that if $T$ is $n$-term but not $(n-1)$-term then $(\cd_T^{\leq 0},\cd_T^{\geq 0})$ is $n$-intermediate but not $(n-1)$-intermediate. Assume that $T$ is $n$-term. Then by d\'evissage we have $\cd_\std^{\leq -n+1}\subseteq\cd_T^{\leq 0}$ and $\cd_T^{\geq 0}\supseteq\cd_\std^{\geq 0}$. This shows that $(\cd_T^{\leq 0},\cd_T^{\geq 0})$ is $n$-intermediate. Assume further that $T$ is not $(n-1)$-term. Then there is a triangle
\[
\xymatrix{
T'\ar[r] & T\ar[r] & T''\ar[r] & \Sigma T'
}
\]
with $T'\in\add(A)*\cdots*\Sigma^{n-2}\add(A)$ and $0\not\cong T''\in\Sigma^{n-1}\add(A)$. Since $H^0(\Sigma^{-n+1}T'')$ is a non-zero projective $H^0(A)$-module, there exists an $H^0(A)$-module $Y$ such that $\Hom_{H^0(A)}(H^0(\Sigma^{-n+1}T''),Y)\neq 0$. We let $X=\Sigma^{n-2}Y$ and apply $\Hom_{\cd(A)}(?,X)$ to the above triangle. Inspection on the resulting long sequence shows that 
\begin{align*}
\Hom_{\cd(A)}(T,\Sigma X)&\cong\Hom_{\cd(A)}(T'',\Sigma X)=\Hom_{\cd(A)}(\Sigma^{-n+1}T'',Y)\\
&=\Hom_{H^0(A)}(H^0(\Sigma^{-n+1}T''),Y)\neq 0,
\end{align*} 
so $X\not\in\cd_T^{\leq 0}$. This shows that $\cd_{\std}^{\leq -n+2}\not\subseteq\cd_T^{\leq 0}$, and hence $(\cd_T^{\leq 0},\cd_T^{\geq 0})$ is not $(n-1)$-intermediate.
\end{proof}

\subsection{Restriction of silting $t$-structures on subcategories}
\label{ss:restriction-of-silting-t-structures}
Let $A$ be a non-positive dg $k$-algebra. Let $\mathcal{D}_{fg}(A)$ denote the full subcategory of $\mathcal{D}(A)$ consisting of dg $A$-modules $M$
such that $H^{\ast}(M)$ is finitely generated over the base ring $k$. If $k$ is a noetherian ring, this is a triangulated subcategory of $\mathcal{D}(A)$; if further $A$ is a $k$-algebra and is finitely generated over $k$, then there is a canonical triangle equivalence $\mathcal{D}^{b}(\mod A)\rightarrow\mathcal{D}_{fg}(A)$. When $k$ is a field, let $\mathcal{D}_{fd}(A)$ denote the full subcategory of $\mathcal{D}(A)$ consisting of those dg $A$-modules with finite-dimensional total cohomology. So in this case $\cd_{fd}(A)=\cd_{fg}(A)$. If $A$ is a finite-dimensional $k$-algebra over the field $k$, then there is a canonical triangle equivalence $\mathcal{D}^{b}(\fdmod A)\to\mathcal{D}_{fd}(A)$.

Let $T$ be a silting object of $\per(A)$. Throughout this subsection we fix one of the following settings:
\begin{itemize}
\item[(1)] $\mathbf{D}=\cd^b$ and $\mathscr{M}=\Mod$;
\item[(2)] $k$ is a noetherian ring, $\mathbf{D}=\cd_{fg}$ and $\mathscr{M}=\mod$;
\item[(3)] $k$ is a field, $\mathbf{D}=\cd_{fd}$ and $\mathscr{M}=\fdmod$.
\end{itemize}
Note that (3) is a special case of (2).
Put 
\begin{align*}
\mathbf{D}_\std^{\leq 0}=\mathbf{D}(A)\cap\cd_\std^{\leq 0},~~ \mathbf{D}_\std^{\geq 0}=\mathbf{D}(A)\cap\cd_\std^{\geq 0};\\
\mathbf{D}_T^{\leq 0}=\mathbf{D}(A)\cap\cd_T^{\leq 0},~~
\mathbf{D}_T^{\geq 0}=\mathbf{D}(A)\cap\cd_T^{\geq 0}.
\end{align*}

\begin{proposition}
\label{prop:restriction-of-silting-t-structure-on-subcategories}

\begin{itemize}
\item[(a)]
Let $B$ be another non-positive dg $k$-algebra. Then any triangle equivalence $F\colon\cd(B)\to\cd(A)$ restricts to a triangle equivalence $\mathbf{D}(B)\to\mathbf{D}(A)$.
\item[(b)] The pair $(\mathbf{D}_\std^{\leq 0},\mathbf{D}_\std^{\geq 0})$ is a bounded $t$-structure on $\mathbf{D}(A)$ and $H^{0}\colon \mathbf{D}(A)\rightarrow \mathscr{M}_{H^0(A)}$, the functor of taking the 0-th cohomology, restricts to an equivalence from the heart to $\mathscr{M}_{H^0(A)}$, with the embedding $\mathscr{M}_{H^0(A)}\to\mathbf{D}(A)$ as a quasi-inverse. The associated truncation functors are the standard truncation functors.
\item[(c)] The pair $(\mathbf{D}_T^{\leq 0},\mathbf{D}_T^{\geq 0})$  is a bounded $t$-structure on $\mathbf{D}(A)$ and the Hom-functor $\Hom_{\mathcal{D}(A)}(T,?)\colon\mathbf{D}(A)\to\mathscr{M}_{\End_{\cd(A)}(T)}$ restricts to an equivalence from the heart to $\mathscr{M}_{\End_{\cd(A)}(T)}$. It is $n$-intermediate with respect to the standard $t$-structure if and only if $T$ is $n$-term.
\end{itemize}
\end{proposition}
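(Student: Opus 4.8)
The plan is to derive everything from the unbounded statements already proved — Lemmas~\ref{lem:standard-t-structure-on-unbounded-level} and~\ref{lem:silting-t-structure-unbounded-level} — by checking that the subcategory $\mathbf{D}(A)$ of $\cd(A)$ is stable under the relevant truncation functors, and then by transporting part~(b) along the derived equivalence $?\lten_B T$ to obtain part~(c). Throughout I use that, in each of the three settings, $\mathbf{D}(A)$ is a full triangulated subcategory of $\cd(A)$ that is closed under shifts and direct summands and is contained in $\cd^b(A)$; consequently $\mathbf{D}_\std^{\leq i}=\mathbf{D}(A)\cap\cd_\std^{\leq i}$ and $\mathbf{D}_T^{\leq i}=\mathbf{D}(A)\cap\cd_T^{\leq i}$ for every $i\in\Z$, and likewise for the $\geq$ parts. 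For part~(a), the point is that $\mathbf{D}(A)$ admits a description inside $\cd(A)$ that is manifestly stable under triangle equivalences. When $\mathbf{D}=\cd^b$ this is exactly the characterisation established in the proof of Lemma~\ref{lem:restriction-of-triangle-equivalence-large-module-level}; since a triangle equivalence sends compact objects to compact objects, $F(\per(B))=\per(A)$, and $F$ preserves this characterisation because it commutes with $\Sigma$. When $\mathbf{D}=\cd_{fg}$ (resp.\ $\cd_{fd}$) one uses instead that $M\in\mathbf{D}(A)$ if and only if, for every $P\in\per(A)$, the $k$-module $\Hom_{\cd(A)}(P,\Sigma^i M)$ is finitely generated (resp.\ finite-dimensional) for all $i$ and vanishes for $|i|\gg 0$ — one implication takes $P=A$ and uses \eqref{eq:morphisms-vs-cohomologies}, and the other follows because every $P\in\per(A)=\thick(A_A)$ is a direct summand of a finite iterated extension of shifts of $A_A$, so $\Hom_{\cd(A)}(P,\Sigma^i M)$ is built from the cohomologies $H^j(M)$ by finitely many extensions (here one uses that $k$ is noetherian, resp.\ a field). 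A ($k$-linear) triangle equivalence $F$ with $F(\per(B))=\per(A)$ preserves this condition, hence restricts to the asserted equivalence $\mathbf{D}(B)\to\mathbf{D}(A)$.

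For part~(b), by Lemma~\ref{lem:standard-t-structure-on-unbounded-level} the pair $(\cd_\std^{\leq 0},\cd_\std^{\geq 0})$ is a $t$-structure on $\cd(A)$ whose truncation functors are the standard cohomological truncations, and these preserve $\mathbf{D}(A)$ because truncation only annihilates some cohomology modules. Hence the truncation triangle of any $Z\in\mathbf{D}(A)$ lies entirely in $\mathbf{D}(A)$, which gives axiom~(3) for $(\mathbf{D}_\std^{\leq 0},\mathbf{D}_\std^{\geq 0})$, while axioms~(1) and~(2) are inherited from $\cd(A)$, and boundedness is immediate since $\mathbf{D}(A)\subseteq\cd^b(A)$. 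The heart is $\mathbf{D}(A)\cap\Mod H^0(A)$, and an $H^0(A)$-module viewed as a complex concentrated in degree $0$ lies in $\mathbf{D}(A)$ precisely when it lies in $\mathscr{M}_{H^0(A)}$; so the heart equivalence is the restriction of that of Lemma~\ref{lem:standard-t-structure-on-unbounded-level}, with the embedding $\mathscr{M}_{H^0(A)}\hookrightarrow\mathbf{D}(A)$ as quasi-inverse, and the truncation functors are the standard ones by construction.

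For part~(c), we may assume $T$ is $\ck$-projective over $A$, which changes none of $\cd_T^{\leq 0}$, $\cd_T^{\geq 0}$ or $\Hom_{\cd(A)}(T,?)$. Let $B=\sigma^{\leq 0}\cEnd_A(T)$ be the truncated dg endomorphism algebra of $T$: it is non-positive with $H^0(B)=\End_{\cd(A)}(T)$, and by Section~\ref{sss:truncated-dg-endo-algebra-of-a-silting} the functor $?\lten_B T\colon\cd(B)\to\cd(A)$ is a triangle equivalence taking $(\cd_\std^{\leq 0}(B),\cd_\std^{\geq 0}(B))$ to $(\cd_T^{\leq 0},\cd_T^{\geq 0})$. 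By (a) it restricts to an equivalence $\mathbf{D}(B)\to\mathbf{D}(A)$, so by (b) applied to $B$ it carries the bounded $t$-structure $(\mathbf{D}_\std^{\leq 0}(B),\mathbf{D}_\std^{\geq 0}(B))$ to $(\mathbf{D}_T^{\leq 0},\mathbf{D}_T^{\geq 0})$; hence the latter is a bounded $t$-structure on $\mathbf{D}(A)$. Since $\Hom_{\cd(A)}(T,?)=H^0(\cHom_A(T,?))$ by \eqref{eq:morphisms-in-derived-category}, matching $H^0=\Hom_{\cd(B)}(B,?)$ with $\Hom_{\cd(A)}(T,?)$ along this equivalence — exactly as in the proof of Lemma~\ref{lem:silting-t-structure-unbounded-level} — identifies the heart equivalence with $\Hom_{\cd(A)}(T,?)\colon\mathbf{D}(A)\to\mathscr{M}_{\End_{\cd(A)}(T)}$. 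Finally, intersecting the inclusions $\cd_\std^{\leq -n+1}\subseteq\cd_T^{\leq 0}\subseteq\cd_\std^{\leq 0}$ of Lemma~\ref{lem:silting-t-structure-unbounded-level} with $\mathbf{D}(A)$ shows that $T$ being $n$-term forces $(\mathbf{D}_T^{\leq 0},\mathbf{D}_T^{\geq 0})$ to be $n$-intermediate; for the converse one argues as in the proof of Lemma~\ref{lem:silting-t-structure-unbounded-level}, the point being that, when $T$ fails to be $n$-term, the separating object constructed there is a shift of a nonzero finitely generated (resp.\ finite-dimensional) projective $H^0(A)$-module, hence lies in $\mathbf{D}(A)$ — so it witnesses $\mathbf{D}_\std^{\leq -n+1}\not\subseteq\mathbf{D}_T^{\leq 0}$ and $(\mathbf{D}_T^{\leq 0},\mathbf{D}_T^{\geq 0})$ is not $n$-intermediate.

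The real work is concentrated in (a) — pinning down an equivalence-stable intrinsic description of $\mathbf{D}(A)$, which for $\cd_{fg}$ and $\cd_{fd}$ requires detecting finite generation, resp.\ finite-dimensionality, through the $\Hom$-groups out of the compact objects — and in the converse half of the last clause of (c), where the object witnessing the failure of $n$-intermediacy must be chosen inside $\mathbf{D}(A)$; that is why one uses a finitely generated (resp.\ finite-dimensional) projective $H^0(A)$-module rather than an arbitrary module as the test object. Everything else is routine bookkeeping: restricting the two unbounded $t$-structures to $\mathbf{D}(A)$ and transporting one of them along $?\lten_B T$.
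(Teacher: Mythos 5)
Your proof is correct and follows the same strategy as the paper: a dévissage argument yielding an equivalence-stable intrinsic description of $\mathbf{D}(A)$ in terms of $\Hom$-groups out of perfect objects for part~(a), restriction of the standard truncations for part~(b), and transport of the standard $t$-structure along the equivalence $?\lten_B T$ for part~(c) (the paper's proof of (c) is literally ``Similar to Lemma~\ref{lem:silting-t-structure-unbounded-level}''). You have usefully spelled out the one non-routine step the paper leaves implicit in (c), namely that the object witnessing failure of $n$-intermediacy must be chosen inside $\mathbf{D}(A)$, which works because $H^0(\Sigma^{-n+1}T'')$ is a nonzero finitely generated projective $H^0(A)$-module; for setting~(3) the paper instead cites \cite[Lemma 3.1]{KalckYang18a}, but your direct dévissage is equivalent since (3) is a special case of (2).
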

\begin{proof}
(a) For the setting (1), this is contained in Lemma~\ref{lem:restriction-of-triangle-equivalence-large-module-level}. For the setting (3), this is contained in \cite[Lemma 3.1]{KalckYang18a}.
For the setting (2), we claim that $\cd_{fg}(A)$ admits the following intrinsic description
\begin{align*}
\cd_{fg}(A)&=\{X\in\cd(A)\mid \bigoplus_{i\in\mathbb{Z}}\Hom_{\cd(A)}(P,\Sigma^i X) \text{ is finitely generated over $k$ }\\
&\qquad\qquad\qquad\qquad \forall P\in\per(A)\}.
\end{align*}
This follows by d\'evissage, similar to that used in the proof of Lemma~\ref{lem:restriction-of-triangle-equivalence-large-module-level}. Then the desired result follows from the restriction of $F$ on $\per$ (Lemma~\ref{lem:restriction-of-triangle-equivalence-large-module-level}).

(b) Since the standard truncation $\sigma^{\leq 0}$ preserves cohomologies in non-positive degrees, it restricts to a functor $\sigma^{\leq 0}\colon \mathbf{D}(A)\to\mathbf{D}(A)$. It follows that the $t$-structure in Lemma~\ref{lem:standard-t-structure-on-unbounded-level} restricts to $\mathbf{D}(A)$ and produces the $t$-structure $(\mathbf{D}_\std^{\leq 0},\mathbf{D}_\std^{\geq 0})$. It is rather clear that this $t$-structure is bounded. We remark that in the setting (3) this statement is \cite[Proposition 2.1(b)]{KalckYang16}.

(c) Similar to Lemma~\ref{lem:silting-t-structure-unbounded-level}.
\end{proof}

\section{2-term silting objects on non-positive dg algebras} 
\label{s:2-term-silting-objects}

Let $A$ be a non-positive dg $k$-algebra. Throughout this section we fix one of the following settings:
\begin{itemize}
\item[(1)] $\mathbf{D}=\cd^b$ and $\mathscr{M}=\Mod$;
\item[(2)] $k$ is a noetherian ring, $\mathbf{D}=\cd_{fg}$ and $\mathscr{M}=\mod$;
\item[(3)] $k$ is a field, $\mathbf{D}=\cd_{fd}$ and $\mathscr{M}=\fdmod$.
\end{itemize}
Put
\begin{align*}
\mathbf{D}_\std^{\leq 0}=\mathbf{D}(A)\cap\cd_\std^{\leq 0},~~ \mathbf{D}_\std^{\geq 0}=\mathbf{D}(A)\cap\cd_\std^{\geq 0}.
\end{align*}
By Proposition~\ref{prop:restriction-of-silting-t-structure-on-subcategories}, the pair $(\mathbf{D}_\std^{\leq 0},\mathbf{D}_\std^{\geq 0})$ is a bounded $t$-structure on $\mathbf{D}(A)$.
Let $T$ be a 2-term silting object of $\per(A)$, which is $\ck$-projective over $A$. Put
\begin{align*}
\mathbf{D}_T^{\leq 0}=\mathbf{D}(A)\cap\cd_T^{\leq 0},~~
\mathbf{D}_T^{\geq 0}=\mathbf{D}(A)\cap\cd_T^{\geq 0}.
\end{align*}
By Proposition~\ref{prop:restriction-of-silting-t-structure-on-subcategories}, the pair $(\mathbf{D}_T^{\leq 0},\mathbf{D}_T^{\geq 0})$ is a bounded $t$-structure on $\mathbf{D}(A)$, which is intermediate with respect to $(\mathbf{D}_\std^{\leq 0},\mathbf{D}_\std^{\geq 0})$.

Consider the full subcategories of $\mathscr{M}_{H^0(A)}$ given by
\begin{align*}
\mathscr{T}_T&=\{M\in \mathscr{M}_{H^0(A)}\mid \Hom_{\mathbf{D}(A)}(T,\Sigma M)=0\},\\
\mathscr{F}_T&=\{N\in\mathscr{M}_{H^0(A)}\mid \Hom_{\mathbf{D}(A)}(T,N)=0\}.
\end{align*}
For $M\in\mathscr{M}_{H^0(A)}$ we have $\Hom_{\mathbf{D}(A)}(T,\Sigma^i M)=0$ unless $i=0,1$, since $T$ is $2$-term. 
It follows that $\mathscr{T}_T=\mathscr{M}_{H^0(A)}\cap\mathbf{D}^{\leq 0}_T$ and $\mathscr{F}_T=\mathscr{M}_{H^0(A)}\cap\mathbf{D}^{\geq 1}_T$. The following result is then a consequence of Proposition~\ref{prop:bijection-between-int-t-str-and-torsion-pairs}.

\begin{proposition}
\label{prop:torsion-pair-from-2-term-silting}
The pair $(\mathscr{T}_T,\mathscr{F}_T)$ is a torsion pair of $\mathscr{M}_{H^0(A)}$ and $(\mathbf{D}_T^{\leq 0},\mathbf{D}_T^{\geq 0})$ is the HRS tilt of the standard $t$-structure at this torsion pair.
\end{proposition}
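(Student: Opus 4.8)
The plan is to deduce the statement directly from the bijection between intermediate $t$-structures and torsion pairs (Proposition~\ref{prop:bijection-between-int-t-str-and-torsion-pairs}), applied to the bounded $t$-structure $(\mathbf{D}_\std^{\leq 0},\mathbf{D}_\std^{\geq 0})$ on $\mathbf{D}(A)$ and the intermediate $t$-structure $(\mathbf{D}_T^{\leq 0},\mathbf{D}_T^{\geq 0})$.

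First I would record that, by the discussion preceding the statement, $(\mathbf{D}_\std^{\leq 0},\mathbf{D}_\std^{\geq 0})$ is a bounded $t$-structure on $\mathbf{D}(A)$ with heart (equivalent to) $\mathscr{M}_{H^0(A)}$ via $H^0$, and that $(\mathbf{D}_T^{\leq 0},\mathbf{D}_T^{\geq 0})$ is a bounded $t$-structure that is intermediate (i.e.\ $2$-intermediate) with respect to it, the latter because $T$ is $2$-term (Proposition~\ref{prop:restriction-of-silting-t-structure-on-subcategories}(c)). By Proposition~\ref{prop:bijection-between-int-t-str-and-torsion-pairs}, the intermediate $t$-structure $(\mathbf{D}_T^{\leq 0},\mathbf{D}_T^{\geq 0})$ therefore corresponds to a unique torsion pair of the heart $\mathscr{M}_{H^0(A)}$, and that torsion pair is recovered as $(\mathbf{D}_T^{\leq 0}\cap\mathscr{M}_{H^0(A)},\ \mathbf{D}_T^{\geq 1}\cap\mathscr{M}_{H^0(A)})$, while the $t$-structure is the HRS tilt at this torsion pair.

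Next I would identify this abstractly-defined torsion pair with $(\mathscr{T}_T,\mathscr{F}_T)$. Here I use the identities established just before the statement: for $M\in\mathscr{M}_{H^0(A)}$ one has $\Hom_{\mathbf{D}(A)}(T,\Sigma^i M)=0$ unless $i\in\{0,1\}$ since $T$ is $2$-term, whence $\mathbf{D}_T^{\leq 0}\cap\mathscr{M}_{H^0(A)}=\{M\mid\Hom_{\mathbf{D}(A)}(T,\Sigma M)=0\}=\mathscr{T}_T$ and $\mathbf{D}_T^{\geq 1}\cap\mathscr{M}_{H^0(A)}=\{M\mid\Hom_{\mathbf{D}(A)}(T,M)=0\}=\mathscr{F}_T$. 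Substituting into the previous paragraph gives both assertions: $(\mathscr{T}_T,\mathscr{F}_T)$ is a torsion pair of $\mathscr{M}_{H^0(A)}$, and $(\mathbf{D}_T^{\leq 0},\mathbf{D}_T^{\geq 0})$ is its HRS tilt.

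I do not expect a serious obstacle here: the proposition is essentially a bookkeeping consequence of results already assembled in the excerpt. The one point that requires a little care is the passage through the equivalence $H^0\colon\text{heart}\xrightarrow{\simeq}\mathscr{M}_{H^0(A)}$ of Proposition~\ref{prop:restriction-of-silting-t-structure-on-subcategories}(b): Proposition~\ref{prop:bijection-between-int-t-str-and-torsion-pairs} produces a torsion pair on the abstract heart $\mathbf{D}_\std^{\leq 0}\cap\mathbf{D}_\std^{\geq 0}$, and one must transport it along $H^0$ to $\mathscr{M}_{H^0(A)}$; since $H^0$ is an equivalence under which the embedding $\mathscr{M}_{H^0(A)}\hookrightarrow\mathbf{D}(A)$ is a quasi-inverse, the subcategories $\mathbf{D}_T^{\leq 0}\cap\mathscr{M}_{H^0(A)}$ and $\mathbf{D}_T^{\geq 1}\cap\mathscr{M}_{H^0(A)}$ computed inside $\mathbf{D}(A)$ match the images of $\mathbf{D}_T^{\leq 0}\cap\ca$ and $\mathbf{D}_T^{\geq 1}\cap\ca$ under $H^0$, so this transport is harmless and the identification goes through verbatim.
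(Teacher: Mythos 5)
Your proof is correct and follows essentially the same route as the paper: the statement is observed to be an immediate consequence of Proposition~\ref{prop:bijection-between-int-t-str-and-torsion-pairs}, after noting that the two $t$-structures on $\mathbf{D}(A)$ are bounded, that $(\mathbf{D}_T^{\leq 0},\mathbf{D}_T^{\geq 0})$ is intermediate since $T$ is $2$-term, and that $\mathscr{T}_T=\mathbf{D}_T^{\leq 0}\cap\mathscr{M}_{H^0(A)}$, $\mathscr{F}_T=\mathbf{D}_T^{\geq 1}\cap\mathscr{M}_{H^0(A)}$. Your extra care with transporting the torsion pair along the equivalence $H^0$ is a harmless refinement of what the paper leaves implicit.
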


Set
\begin{align*}
\mathrm{tors}(\mathscr{M}_{H^0(A)}) &=\text{the set of torsion pairs of}\ \mathscr{M}_{H^0(A)},\\
\mathrm{2}\text{-}\mathrm{silt}(A) &=\text{the set of equivalence classes of 2-term silting objects $T$ in}\ \per(A),\\
\mathrm{int}\text{-}\mathrm{t}\text{-}\mathrm{str}(\mathbf{D}(A)) &= \text{the set of intermediate bounded $t$-structures on}\ \mathbf{D}(A)\ \text{with respect} \\
   & \text{\ \ \ \  to the standard $t$-structure}\ (\mathbf{D}_\std^{\leq 0},\mathbf{D}_\std^{\geq 0}).
\end{align*}
Here two silting objects $T$ and $S$ of $\per(A)$ are said to be \emph{equivalent} if $\add(T)=\add(S)$. Then by Proposition~\ref{prop:torsion-pair-from-2-term-silting} we obtain the following commutative diagram:
$$
\xymatrix{
2{\text-}\mathrm{silt}(A) \ar[rd]\ar[rr]& &\mathrm{int}{\text-}\mathrm{t}{\text-}\mathrm{str}(\mathbf{D}(A))\ar[ld]\\
 & \mathrm{tors}(\mathscr{M}_{H^0(A)})&
}
$$

Recall that the canonical projection $p\colon A\rightarrow H^{0}(A)$ is a homomorphism of dg algebras. Therefore we have the induction functor $p_*\colon\cd(A)\to\cd(H^0(A))$ and the restriction functor $p^*\colon\cd(H^0(A))\to\cd(A)$, which form an adjoint pair of triangle functors. They restrict to triangle functors $p_{\ast}\colon \per(A)\rightarrow \per(H^{0}(A))$ and  $p^{\ast}\colon \mathbf{D}(H^{0}(A))\rightarrow \mathbf{D}(A)$. Moreover, restricted on $\add_{\per(A)}(A_A)$ the induction functor $p_*$ is isomorphic to the equivalence $H^0\colon \add_{\per(A)}(A_A)\to \proj H^0(A)$ (see \eqref{eq:inducting-free-modules}); restricted on $\mathscr{M}_{H^0(A)}$ the restriction functor $p^*$ is the identity functor, thus $p^{\ast}$ is a realisation functor in the sense of \cite[Section 2.2]{ChenHanZhou19}.

\begin{proposition}
\label{prop:passing-to-the-0th-cohomology-silting-and-t-str}
\begin{itemize}
\item[(a)] {\rm(\cite[Proposition A.3]{BruestleYang13})} The induction functor $p_{\ast}\colon \per(A)\rightarrow \per(H^{0}(A))$ induces a bijection between the sets of equivalence classes of 2-term silting objects.

\item[(b)] The restriction functor $p^{\ast}\colon \mathbf{D}(H^{0}(A))\rightarrow \mathbf{D}(A)$ induces a bijection between the sets of intermediate $t$-structures, which is compatible with the bijection in Proposition~\ref{prop:bijection-between-int-t-str-and-torsion-pairs}.
\end{itemize}
\end{proposition}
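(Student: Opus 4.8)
The plan is to handle the two parts separately. For (a) I would simply invoke \cite[Proposition A.3]{BruestleYang13}. The conceptual reason it holds is that, by \eqref{eq:inducting-free-modules}, $p_*$ is a triangle functor whose restriction to $\add_{\per(A)}(A_A)$ is (isomorphic to) the equivalence $H^0\colon\add_{\per(A)}(A_A)\iso\proj H^0(A)$: writing a $2$-term silting object $T$ of $\per(A)$ as $\Cone(P^{-1}\to P^0)$ with $P^{-1},P^0\in\add(A_A)$ and applying $p_*$ exhibits $p_*T$ as a $2$-term object over $H^0(A)$, and one then checks that $p_*T$ is again silting and that $p_*$ is bijective on equivalence classes, for which the derived equivalence $?\lten_B T\colon\cd(B)\iso\cd(A)$ attached to the truncated dg endomorphism algebra $B$ of $T$ is a convenient device. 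I would not reprove this and would just cite the reference.

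For (b), the strategy is to route the claimed bijection through torsion pairs. By Proposition~\ref{prop:restriction-of-silting-t-structure-on-subcategories}(b), the standard $t$-structures on $\mathbf{D}(A)$ and on $\mathbf{D}(H^0(A))$ are both bounded and share the \emph{same} heart $\mathscr{M}_{H^0(A)}$. Hence Proposition~\ref{prop:bijection-between-int-t-str-and-torsion-pairs}, applied on each side through the Happel--Reiten--Smal{\o} tilt (Construction~\ref{constr:HRS-tilt}), gives bijections
\[
\mathrm{int}\text{-}\mathrm{t}\text{-}\mathrm{str}(\mathbf{D}(H^0(A)))\ \cong\ \mathrm{tors}(\mathscr{M}_{H^0(A)})\ \cong\ \mathrm{int}\text{-}\mathrm{t}\text{-}\mathrm{str}(\mathbf{D}(A)),
\]
whose composite is a bijection between the two sets of intermediate $t$-structures that is, tautologically, compatible with the bijection of Proposition~\ref{prop:bijection-between-int-t-str-and-torsion-pairs}. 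What then remains is to recognise this composite as the map induced by $p^*$.

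Here the two relevant facts are: $p^*$ commutes with the cohomology functors $H^i$, since restriction of scalars along $p\colon A\to H^0(A)$ leaves the underlying complex of $k$-modules untouched; and $p^*$ restricts to the identity on the common heart $\mathscr{M}_{H^0(A)}$, as noted just before the statement. Since membership of an object $X$ in the two subcategories $\cd^{\prime\leq 0}$, $\cd^{\prime\geq 0}$ of an HRS tilt is detected by the vanishing of $H^i(X)$ for $i$ outside a two-term range together with the conditions $H^0(X)\in\mathscr{T}$ (for $\cd^{\prime\leq 0}$) and $H^{-1}(X)\in\mathscr{F}$ (for $\cd^{\prime\geq 0}$), these facts show that $p^*$ carries the HRS tilt on $\mathbf{D}(H^0(A))$ at a torsion pair $(\mathscr{T},\mathscr{F})$ into the HRS tilt on $\mathbf{D}(A)$ at the same $(\mathscr{T},\mathscr{F})$. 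Equivalently, the composite bijection above sends $(\cd^{\prime\leq 0},\cd^{\prime\geq 0})$ on $\mathbf{D}(H^0(A))$ to the unique intermediate $t$-structure on $\mathbf{D}(A)$ whose two halves contain $p^*(\cd^{\prime\leq 0})$ and $p^*(\cd^{\prime\geq 0})$ respectively; uniqueness holds because if two intermediate $t$-structures on $\mathbf{D}(A)$ both satisfy this containment, then intersecting their halves with $\mathscr{M}_{H^0(A)}$ and using the identity-on-heart fact forces each of the associated torsion pairs to contain, hence to equal, $(\mathscr{T},\mathscr{F})$, so the two $t$-structures coincide by the injectivity in Proposition~\ref{prop:bijection-between-int-t-str-and-torsion-pairs}. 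Making this notion of ``induced by $p^*$'' precise is the only mildly delicate point; everything else is a formal consequence of the results already established in Sections~\ref{s:non-positive-dg-algebras} and~\ref{s:2-term-silting-objects}, so I foresee no genuine obstacle.
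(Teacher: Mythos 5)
Your proposal is correct and follows essentially the same route as the paper: part (a) is cited from Br\"ustle--Yang, and part (b) is obtained by composing the two HRS-tilt bijections of Proposition~\ref{prop:bijection-between-int-t-str-and-torsion-pairs} through the common heart $\mathscr{M}_{H^0(A)}$ and then identifying the composite with the map induced by $p^*$ using that $p^*$ preserves the cohomology functors and is the identity on the heart. The paper simply records the resulting $t$-structure on $\mathbf{D}(A)$ as the extension-and-summand closure of $p^*(\mathcal{C}^{\leq 0})$ and $p^*(\mathcal{C}^{\geq 0})$; your uniqueness argument pins down the same $t$-structure in an equivalent way.
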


\begin{proof} (b) 
Recall that $\mathscr{M}_{H^0(A)}$ embeds to both $\mathbf{D}(H^0(A))$ and $\mathbf{D}(A)$ as the heart of the standard $t$-structures and that the functor $p^\ast$ is the identity functor on $\mathscr{M}_{H^0(A)}$. Let $(\mathcal{C}^{\leq 0},\mathcal{C}^{\geq 0})$ be an intermediate $t$-structure on $\mathbf{D}(H^0(A))$. By Proposition~\ref{prop:bijection-between-int-t-str-and-torsion-pairs}, there is a torsion pair $(\mathscr{T},\mathscr{F})$ of $\mathscr{M}_{H^0(A)}$, which, by Proposition~\ref{prop:bijection-between-int-t-str-and-torsion-pairs} again, corresponds to an intermediate $t$-structure $(\cd^{\leq 0},\cd^{\geq 0})$ on $\mathbf{D}(A)$. The desired bijection is $(\mathcal{C}^{\leq 0},\mathcal{C}^{\geq 0})\mapsto (\cd^{\leq 0},\cd^{\geq 0})$. Precisely, we have
\begin{align*}
\cd^{\leq 0}&= \text{the smallest full subcategory of}~\mathbf{D}(A)~\text{which contains}~p^{\ast}(\mathcal{C}^{\leq 0})\\
&\qquad\text{and which is closed under extensions and direct summands},\\
\cd^{\geq 0}&= \text{the smallest full subcategory of}~\mathbf{D}(A)~\text{which contains}~p^{\ast}(\mathcal{C}^{\geq 0})\\
&\qquad\text{and which is closed under extensions and direct summands}.\qedhere
\end{align*}
\end{proof}

\begin{remark}
If $T$ is a silting object of $\per(A)$ which is not $2$-term, then $p_*(T)$ is in general not a silting object of $\per(H^0(A))$. For example, let $Q$ be the graded quiver
\[
\xymatrix{1\ar[r]^\alpha & 2}
\]
with $\alpha$ in degree $-1$. Denote by $S_1$ and $S_2$ the two simple modules concentrated in degree $0$, corresponding to the vertices $1$ and $2$ respectively. Then $H^0(A)=k\times k$, $T=\Sigma^3 S_1\oplus S_2$ is a silting object of $\per(A)$ but $p_*(T)=\Sigma^3 S_1\oplus \Sigma^2 S_1\oplus S_2$ is not a silting object of $\per(H^0(A))$.
\end{remark}

We summarise the above results in a commutative diagram.

\begin{theorem}
\label{thm:silting-torsionpair-tstr-along-projection}
There is a commutative diagram:

$$\xymatrix{
2{\text-}\mathrm{silt}(A) \ar[dd]\ar[rd]\ar[rr]& &\mathrm{int}{\text-}\mathrm{t}{\text-}\mathrm{str}(\mathbf{D}(A))\ar[ld]\\
 & \mathrm{tors}(\mathscr{M}_{H^0(A)})& \\
2{\text-}\mathrm{silt}(H^{0}(A)) \ar[rr]\ar[ru] & &\mathrm{int}{\text-}\mathrm{t}{\text-}\mathrm{str}(\mathbf{D}(H^{0}(A)))\ar[uu]\ar[lu]}
$$

\begin{proof} Since $(p_{\ast},p^{\ast})$ is an adjoint pair, there is an isomorphism for any $X\in\cd(H^0(A))$
\begin{center}
$\Hom_{\cd(H^0(A))}(p_{\ast}(T),\Sigma^i X)\cong\Hom_{\cd(A)}(T,\Sigma^i p^{\ast}(X))$,
\end{center}
which implies the commutativity of the left triangle. The commutativity of other triangles has been shown above.
\end{proof}
\end{theorem}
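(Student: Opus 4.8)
The plan is to show that the whole diagram commutes by verifying the four elementary triangles out of which it is assembled and then deducing the one remaining outer square by formal cancellation. To fix notation, for a non-positive dg $k$-algebra $R$ write $\phi_R\colon 2{\text-}\mathrm{silt}(R)\to\mathrm{int}{\text-}\mathrm{t}{\text-}\mathrm{str}(\mathbf{D}(R))$ for the map $T\mapsto(\mathbf{D}_T^{\leq 0},\mathbf{D}_T^{\geq 0})$, $\tau_R\colon 2{\text-}\mathrm{silt}(R)\to\mathrm{tors}(\mathscr{M}_{H^0(R)})$ for the map $T\mapsto(\mathscr{T}_T,\mathscr{F}_T)$, and $\psi_R\colon\mathrm{int}{\text-}\mathrm{t}{\text-}\mathrm{str}(\mathbf{D}(R))\to\mathrm{tors}(\mathscr{M}_{H^0(R)})$ for the inverse of the HRS-tilt bijection of Proposition~\ref{prop:bijection-between-int-t-str-and-torsion-pairs}. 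With this notation the vertical maps of the diagram are $p_{\ast}\colon 2{\text-}\mathrm{silt}(A)\to 2{\text-}\mathrm{silt}(H^0(A))$ and $p^{\ast}\colon\mathrm{int}{\text-}\mathrm{t}{\text-}\mathrm{str}(\mathbf{D}(H^0(A)))\to\mathrm{int}{\text-}\mathrm{t}{\text-}\mathrm{str}(\mathbf{D}(A))$.

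Three of the four triangles are already available. The upper triangle, $\psi_A\circ\phi_A=\tau_A$, is exactly the commutative triangle displayed right after Proposition~\ref{prop:torsion-pair-from-2-term-silting}: by that proposition $(\mathbf{D}_T^{\leq 0},\mathbf{D}_T^{\geq 0})$ is the HRS tilt at $(\mathscr{T}_T,\mathscr{F}_T)$, so $\psi_A$ sends it back to $(\mathscr{T}_T,\mathscr{F}_T)$. The lower triangle, $\psi_{H^0(A)}\circ\phi_{H^0(A)}=\tau_{H^0(A)}$, is the same statement applied to the $k$-algebra $H^0(A)$ viewed as a non-positive dg $k$-algebra concentrated in degree $0$; here one uses $H^0(H^0(A))=H^0(A)$, so that $2{\text-}\mathrm{silt}(H^0(A))$, $\mathrm{int}{\text-}\mathrm{t}{\text-}\mathrm{str}(\mathbf{D}(H^0(A)))$ and $\mathrm{tors}(\mathscr{M}_{H^0(A)})$ all sit over the same module category. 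The right-hand triangle, $\psi_A\circ p^{\ast}=\psi_{H^0(A)}$, is precisely the compatibility with the HRS bijection asserted in Proposition~\ref{prop:passing-to-the-0th-cohomology-silting-and-t-str}(b).

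The only genuinely new point is the left-hand triangle, $\tau_{H^0(A)}\circ p_{\ast}=\tau_A$, and it is a one-line adjunction computation. Fixing a $\ck$-projective representative of a $2$-term silting object $T$ of $\per(A)$, we must check $(\mathscr{T}_{p_{\ast}(T)},\mathscr{F}_{p_{\ast}(T)})=(\mathscr{T}_T,\mathscr{F}_T)$ inside $\mathscr{M}_{H^0(A)}$. Since the restriction functor $p^{\ast}$ is the identity on $\mathscr{M}_{H^0(A)}$, the adjunction $(p_{\ast},p^{\ast})$ gives, for every $M\in\mathscr{M}_{H^0(A)}$ and every $i\in\mathbb{Z}$, a natural isomorphism
\[
\Hom_{\cd(H^0(A))}(p_{\ast}(T),\Sigma^i M)\cong\Hom_{\cd(A)}(T,\Sigma^i p^{\ast}(M))=\Hom_{\cd(A)}(T,\Sigma^i M).
\]
Taking $i=1$ yields $\mathscr{T}_{p_{\ast}(T)}=\mathscr{T}_T$ and taking $i=0$ yields $\mathscr{F}_{p_{\ast}(T)}=\mathscr{F}_T$, as desired. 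Finally the outer square, $p^{\ast}\circ\phi_{H^0(A)}\circ p_{\ast}=\phi_A$, follows formally: post-composing both maps with the bijection $\psi_A$ and invoking the right, lower and left triangles turns the left-hand side into $\psi_{H^0(A)}\circ\phi_{H^0(A)}\circ p_{\ast}=\tau_{H^0(A)}\circ p_{\ast}=\tau_A$, while the upper triangle turns the right-hand side into $\tau_A$ as well; since $\psi_A$ is injective the two maps coincide, and then all parallel paths through the diagram agree.

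I do not anticipate any real obstacle: the whole argument is a diagram chase over results already established in Sections~\ref{s:non-positive-dg-algebras} and \ref{s:2-term-silting-objects}, and its only hands-on ingredient---the identifications $\mathscr{T}_{p_{\ast}(T)}=\mathscr{T}_T$ and $\mathscr{F}_{p_{\ast}(T)}=\mathscr{F}_T$---is immediate from the $(p_{\ast},p^{\ast})$-adjunction together with the fact that $p^{\ast}$ restricts to the identity on $\mathscr{M}_{H^0(A)}$. The one point that needs care is bookkeeping: keeping straight that $p_{\ast}$ pushes $2$-term silting objects down to $H^0(A)$ while $p^{\ast}$ lifts intermediate $t$-structures up to $A$, and noting that every torsion-pair corner of the diagram legitimately lives over $H^0(A)$ only because $H^0(H^0(A))=H^0(A)$.
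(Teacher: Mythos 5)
Your proof is correct and follows essentially the same route as the paper: the left triangle is verified via the $(p_{\ast},p^{\ast})$-adjunction together with the fact that $p^{\ast}$ is the identity on $\mathscr{M}_{H^0(A)}$, while the remaining triangles are already established in Proposition~\ref{prop:torsion-pair-from-2-term-silting} (and the diagram after it) and Proposition~\ref{prop:passing-to-the-0th-cohomology-silting-and-t-str}(b). Your additional spelling-out of the outer square by formal cancellation using the injectivity of the HRS bijection is a harmless elaboration of the same argument.
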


\section{The silting theorem} 
\label{s:the-silting-theorem}

In this section we will prove the silting theorem in the more general setting of non-positive dg algebras and then specialise it to algebras. Fix one of the following settings:
\begin{itemize}
\item[(1)] $\mathbf{D}=\cd^b$, $\mathscr{M}=\Mod$;
\item[(2)] $k$ is a noetherian ring, $\mathbf{D}=\cd_{fg}$, $\mathscr{M}=\mod$;
\item[(3)] $k$ is a field, $\mathbf{D}=\cd_{fd}$, $\mathscr{M}=\fdmod$.
\end{itemize}

\subsection{An abstract version}
\label{ss:silting-theorem-abstract-version}

Let $\Lambda$ and $\Gamma$ be non-positive dg $k$-algebras, $A=H^0(\Lambda)$ and $B=H^0(\Gamma)$. Assume that there is an adjoint pair of triangle equivalences
\[
\xymatrix{
\cd(\Lambda) \ar@<.7ex>[d]^{\widetilde{F}} \\ 
\cd(\Gamma).\ar@<.7ex>[u]^{\widetilde{G}}
}
\]
Then $T=\widetilde{G}(\Gamma)$ is a silting object of $\per(\Lambda)$.
By Proposition~\ref{prop:restriction-of-silting-t-structure-on-subcategories}, the above equivalences restrict to an adjoint pair of triangle equivalences
\[
\xymatrix{
\mathbf{D}(\Lambda) \ar@<.7ex>[d]\\ 
\mathbf{D}(\Gamma),\ar@<.7ex>[u]
}
\]
and the equivalence $\widetilde{G}$ takes the standard $t$-structure on $\mathbf{D}(\Gamma)$ to the $t$-structure $(\mathbf{D}_T^{\leq 0},\mathbf{D}_T^{\geq 0})$.

In the rest of this subsection we assume that $T$ is $2$-term. Consider the functors
\[
\xymatrix{
\mathscr{M}_A \ar@<.7ex>[d]^{F=H^0\circ\widetilde{F}} \\ 
\mathscr{M}_B,\ar@<.7ex>[u]^{G=H^0\circ\widetilde{G}}
}\qquad
\xymatrix{
\mathscr{M}_A \ar@<.7ex>[d]^{F'=H^1\circ\widetilde{F}} \\ 
\mathscr{M}_B.\ar@<.7ex>[u]^{G'=H^{-1}\circ\widetilde{G}}
}
\]
In Section~\ref{s:2-term-silting-objects} we defined two subcategories $\mathscr{T}_T$ and $\mathscr{F}_T$ of $\mathscr{M}_A$. We have
\begin{align*}
\mathscr{T}_T=\{M\in\mathscr{M}_A\mid F'(M)=0\}, ~~& \mathscr{F}_T=\{N\in\mathscr{M}_A\mid F(N)=0\},
\end{align*}
because 
\begin{align*}
F&=H^0\circ\widetilde{F}|_{\mathscr{M}_A}=\Hom_{\cd(\Gamma)}(\Gamma,\widetilde{F}(?))|_{\mathscr{M}_A}\cong\Hom_{\cd(\Lambda)}(T,?)|_{\mathscr{M}_A}\\
F'&=H^1\circ\widetilde{F}|_{\mathscr{M}_A}=\Hom_{\cd(\Gamma)}(\Gamma,\Sigma\widetilde{F}(?))|_{\mathscr{M}_A}\cong\Hom_{\cd(\Lambda)}(T,\Sigma?)|_{\mathscr{M}_A}.
\end{align*}
As in \cite{BreazModoi20} the adjoint pair $(F,G)$ can also be described in terms of the $A$-module $H^0(T)$, which is a silting module over $A$ by \cite[Theorem 4.9]{AngeleriMarksVitoria16} (and a support $\tau$-tilting module by \cite[Theorem 3.2]{AdachiIyamaReiten14}, if $k$ is a field and $A$ is a finite-dimensional $k$-algebra). Taking $H^0$ yields an algebra homomorphism $B=\End_{\cd(\Lambda)}(T)\to \End_A(H^0(T))$, which makes $H^0(T)$ a $B$-$A$-bimodule. Moreover, there are isomorphisms
\[
F(M)\cong\Hom_{\cd(\Lambda)}(T,M)\cong\Hom_{\cd(\Lambda)}(H^0(T),M)\cong\Hom_A(H^0(T),M),
\]
which are compatible with the $B$-actions. This establishes the first isomorphism below, and the second isomorphism is obtained by adjunction.

\begin{lemma}
We have $F\cong \Hom_A(H^0(T),?)$ and $G\cong ~?\ten_B H^0(T)$.
\end{lemma}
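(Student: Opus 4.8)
The plan is to establish the two isomorphisms $F\cong\Hom_A(H^0(T),?)$ and $G\cong ?\ten_B H^0(T)$ by first pinning down $F$ and then passing to $G$ via uniqueness of adjoints. For the first isomorphism, I would retrace the chain of isomorphisms displayed immediately before the statement, namely
\[
F(M)\cong\Hom_{\cd(\Lambda)}(T,M)\cong\Hom_{\cd(\Lambda)}(H^0(T),M)\cong\Hom_A(H^0(T),M),
\]
and justify each link. The first link is the isomorphism $F=H^0\circ\widetilde F|_{\mathscr M_A}\cong\Hom_{\cd(\Lambda)}(T,?)|_{\mathscr M_A}$ already derived above using $\widetilde F\cong\Hom_{\cd(\Lambda)}(T,?)$ (composed with $H^0=\Hom_{\cd(\Gamma)}(\Gamma,?)$ and $T=\widetilde G(\Gamma)$, together with the adjunction). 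The middle link comes from the triangle $T'\to T''\to H^0(T)\to\Sigma T'$ (equivalently $\Sigma^{-1}H^0(T)\to T'\to T''\to H^0(T)$ with $T',T''\in\add(\Lambda)$) realising $T$ as a $2$-term complex whose cohomology is concentrated in degree $0$: applying $\Hom_{\cd(\Lambda)}(?,M)$ for $M\in\mathscr M_A$ and using $\Hom_{\cd(\Lambda)}(\Lambda,\Sigma^iM)=H^i(M)=0$ for $i\neq 0$ gives $\Hom_{\cd(\Lambda)}(T,M)\cong\Hom_{\cd(\Lambda)}(H^0(T),M)$. The last link is simply that for a module $M\in\mathscr M_A$ viewed as a dg $\Lambda$-module via $p\colon\Lambda\to A$, morphisms in $\cd(\Lambda)$ between two objects of $\mathscr M_A$ agree with morphisms in $\Mod A$ (this is the full faithfulness of $\mathscr M_{H^0(\Lambda)}\hookrightarrow\cd(\Lambda)$, together with $p^*$ being the identity on $\mathscr M_A$). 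I would also check that all three isomorphisms are natural in $M$ and compatible with the left $B$-action coming from $B=\End_{\cd(\Lambda)}(T)\to\End_A(H^0(T))$, so that they are isomorphisms of functors $\mathscr M_A\to\mathscr M_B$.

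For the second isomorphism, the cleanest route is to observe that $(F,G)$ is an adjoint pair of functors between $\mathscr M_A$ and $\mathscr M_B$: indeed $(\widetilde F,\widetilde G)$ is an adjoint pair of triangle equivalences, hence restricts to an adjoint pair $\mathbf D(\Lambda)\leftrightarrows\mathbf D(\Gamma)$ by Proposition~\ref{prop:restriction-of-silting-t-structure-on-subcategories}(a), and $\widetilde G$ carries the standard $t$-structure on $\mathbf D(\Gamma)$ to $(\mathbf D_T^{\leq 0},\mathbf D_T^{\geq 0})$; restricting to hearts and applying $H^0$ yields that $F$ (composed of $\widetilde F$ and the heart equivalence $H^0$) is left adjoint to $G$ on the module categories. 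On the other hand $\Hom_A(H^0(T),?)$ has as its right adjoint $?\ten_B H^0(T)$ by the standard tensor-Hom adjunction for the $B$-$A$-bimodule $H^0(T)$. Since a functor's adjoint is unique up to natural isomorphism, $F\cong\Hom_A(H^0(T),?)$ forces $G\cong ?\ten_B H^0(T)$.

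The main obstacle I expect is the bookkeeping in the middle link: one must verify that the triangle exhibiting $H^0(T)$ as the cohomology of the $2$-term complex $T$ — available because $T$ is $2$-term, so $T\in\add(\Lambda)*\Sigma\add(\Lambda)$, equivalently $H^0(T)\in\{T\mid \text{triangle }T'\to T''\to T\to\Sigma T'\text{ with }T',T''\in\add\Lambda\}$ after a shift — actually induces the claimed natural isomorphism after applying $\Hom_{\cd(\Lambda)}(?,M)$, and crucially that this isomorphism is $B$-linear for the correct bimodule structure. Concretely one writes $T$ as the complex $[\,T^{-1}\to T^0\,]$ with $T^{-1},T^0\in\add(\Lambda_\Lambda)$ (so $H^0(T)=\cok(T^{-1}\to T^0)$), and checks against $M\in\mathscr M_A$ concentrated in degree $0$ that $\Hom_{\cd(\Lambda)}(T,M)=\Hom_A(H^0(T),M)$ naturally; the $B$-action on both sides is induced by precomposition with endomorphisms of $T$, which descend to endomorphisms of $H^0(T)$, so the match is forced, but it deserves a sentence. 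Everything else is formal: the first link is already spelled out in the excerpt, the third link is full faithfulness of the heart embedding, and the passage from $F$ to $G$ is uniqueness of adjoints.
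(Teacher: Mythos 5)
Your overall strategy mirrors the paper's: establish $F\cong\Hom_A(H^0(T),?)$ via the displayed chain of isomorphisms (checking $B$-linearity), and deduce $G\cong{?}\otimes_B H^0(T)$ from uniqueness of adjoints. However, your justification of the middle link contains a genuine error. You assert there is a triangle $T'\to T''\to H^0(T)\to\Sigma T'$ with $T',T''\in\add(\Lambda)$ ``realising $T$ as a $2$-term complex whose cohomology is concentrated in degree~$0$''. Neither half is correct: $T$ generally has $H^{-1}(T)\neq 0$, and more importantly such a triangle would force $H^0(T)\in\add(\Lambda)\ast\Sigma\add(\Lambda)$, i.e.\ (in the algebra case) $\pd_A H^0(T)\leq 1$, which fails in general (e.g.\ the paper's last example, where $H^0(T)=P_1\oplus S_1$ has a simple summand of projective dimension~$2$). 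What you presumably have in mind is the exact sequence of modules $H^0(T')\to H^0(T'')\to H^0(T)\to 0$ coming from the genuine triangle $T'\to T''\to T\to \Sigma T'$ — this is not a triangle in $\per(\Lambda)$.

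The correct argument for the middle isomorphism, valid for all non-positive dg $\Lambda$ (your ``concrete'' paragraph only covers $\Lambda=A$ an algebra, since $T^{-1},T^0$ don't make sense otherwise), is a one-liner via truncation adjunction: $T\in\add(\Lambda)\ast\Sigma\add(\Lambda)\subseteq\cd^{\leq 0}_\std$, so $\sigma^{\geq 0}T=H^0(T)$, and since $M\in\cd^{\geq 0}_\std$ and $\sigma^{\geq 0}$ is left adjoint to the inclusion of $\cd^{\geq 0}_\std$, we get $\Hom_{\cd(\Lambda)}(T,M)\cong\Hom_{\cd(\Lambda)}(H^0(T),M)$. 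Alternatively one applies $\Hom_{\cd(\Lambda)}(?,M)$ to the genuine triangle $T'\to T''\to T\to\Sigma T'$, uses $\Hom_{\cd(\Lambda)}(T',\Sigma^{-1}M)=0$ to get $\Hom_{\cd(\Lambda)}(T,M)\cong\ker\bigl(\Hom(T'',M)\to\Hom(T',M)\bigr)$, and matches this with $\ker\bigl(\Hom_A(H^0(T''),M)\to\Hom_A(H^0(T'),M)\bigr)\cong\Hom_A(H^0(T),M)$ via the right-exact sequence $H^0(T')\to H^0(T'')\to H^0(T)\to 0$. Finally, a smaller slip: the adjunction directions you state are both reversed ($G$ is the left adjoint of $F$, just as ${?}\otimes_B H^0(T)$ is the left adjoint of $\Hom_A(H^0(T),?)$, not the right adjoint); your two sign errors cancel, so the conclusion stands, but the intermediate claims as written are wrong. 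The claim that $(F,G)$ is an adjoint pair also needs a short verification via the truncation adjunctions rather than a bare ``restricting to hearts'', since $\widetilde F$ does not carry $\mathscr M_A$ into $\mathscr M_B$.
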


Put
\begin{align*}
\mathscr{X}_T=\{X\in\mathscr{M}_B\mid G(X)=0\}, ~~& \mathscr{Y}_T=\{Y\in\mathscr{M}_B\mid G'(Y)=0\}
\end{align*}

\begin{theorem} 
\label{thm:silting-theorem-1-abstract-version}
\begin{itemize}
\item[(a)] $(\mathscr{T}_T,\mathscr{F}_T)$ is a torsion pair of $\mathscr{M}_A$.
\item[(b)] $(\mathscr{X}_T,\mathscr{Y}_T)$ is a torsion pair of $\mathscr{M}_B$.
\item[(c)] There are equivalences
\[
\xymatrix{
\mathscr{T}_T \ar@<.7ex>[d]^{F|_{\mathscr{T}_T}} \\
\mathscr{Y}_T,\ar@<.7ex>[u]^{G|_{\mathscr{Y}_T}}
}\qquad
\xymatrix{
\mathscr{F}_T \ar@<.7ex>[d]^{F'|_{\mathscr{F}_T}} \\ 
\mathscr{X}_T,\ar@<.7ex>[u]^{G'|_{\mathscr{X}_T}}
}
\]
\end{itemize}
\end{theorem}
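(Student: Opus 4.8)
The plan is to deduce all three statements from one observation recorded at the start of this subsection: the triangle equivalence $\widetilde{G}$ restricts to an equivalence $\mathbf{D}(\Gamma)\to\mathbf{D}(\Lambda)$ carrying the standard $t$-structure to the intermediate $t$-structure $(\mathbf{D}_T^{\leq 0},\mathbf{D}_T^{\geq 0})$. Part (a) then needs nothing new: since $T$ is a $2$-term silting object of $\per(\Lambda)$ and $\mathscr{M}_A=\mathscr{M}_{H^0(\Lambda)}$, it is precisely Proposition~\ref{prop:torsion-pair-from-2-term-silting} for $\Lambda$. That proposition moreover identifies $(\mathbf{D}_T^{\leq 0},\mathbf{D}_T^{\geq 0})$ with the Happel--Reiten--Smal\o{} tilt of the standard $t$-structure at $(\mathscr{T}_T,\mathscr{F}_T)$, so by Construction~\ref{constr:HRS-tilt} its heart is $\cb:=(\Sigma\mathscr{F}_T)\ast\mathscr{T}_T$, the pair $(\Sigma\mathscr{F}_T,\mathscr{T}_T)$ is a torsion pair of $\cb$, and every $Z\in\cb$ has $H^i(Z)=0$ for $i\notin\{-1,0\}$ with $H^0(Z)\in\mathscr{T}_T$ and $H^{-1}(Z)\in\mathscr{F}_T$; furthermore (using the inverse of the bijection in Proposition~\ref{prop:bijection-between-int-t-str-and-torsion-pairs}) such a $Z$ lies in $\mathscr{T}_T$ exactly when $H^{-1}(Z)=0$, and in $\Sigma\mathscr{F}_T$ exactly when $H^0(Z)=0$. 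I will use this description of $\cb$ throughout.

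The key point is that a triangle equivalence sending one $t$-structure to another restricts to an exact equivalence between the hearts. Hence $\widetilde{G}$ restricts to an equivalence of abelian categories $\widetilde{G}\colon\mathscr{M}_B\to\cb$, with quasi-inverse the corresponding restriction of $\widetilde{F}$. Plugging $Z=\widetilde{G}(Y)$ into the description of $\cb$, and recalling that on $\mathscr{M}_B$ one has $G=H^0\circ\widetilde{G}$ and $G'=H^{-1}\circ\widetilde{G}$, I get that $Y\in\mathscr{Y}_T=\ker G'$ if and only if $\widetilde{G}(Y)\in\mathscr{T}_T$, and $X\in\mathscr{X}_T=\ker G$ if and only if $\widetilde{G}(X)\in\Sigma\mathscr{F}_T$. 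Thus the equivalence $\widetilde{G}\colon\mathscr{M}_B\to\cb$ carries $(\mathscr{X}_T,\mathscr{Y}_T)$ onto the torsion pair $(\Sigma\mathscr{F}_T,\mathscr{T}_T)$ of $\cb$; since an equivalence of abelian categories transports torsion pairs to torsion pairs, $(\mathscr{X}_T,\mathscr{Y}_T)$ is a torsion pair of $\mathscr{M}_B$, which is (b).

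For (c), restricting $\widetilde{G}\colon\mathscr{M}_B\to\cb$ and its quasi-inverse $\widetilde{F}$ to the matched torsion pairs immediately gives inverse equivalences between $\mathscr{Y}_T$ and $\mathscr{T}_T$ and between $\mathscr{X}_T$ and $\Sigma\mathscr{F}_T$; what remains is to recognise these restrictions as the functors in the statement. If $Y\in\mathscr{Y}_T$ then $\widetilde{G}(Y)\in\mathscr{T}_T$ is concentrated in degree $0$, so $\widetilde{G}(Y)=H^0(\widetilde{G}(Y))=G(Y)$; dually, for $M\in\mathscr{T}_T\subseteq\cb$ the object $\widetilde{F}(M)$ lies in $\mathscr{M}_B$, so $\widetilde{F}(M)=H^0(\widetilde{F}(M))=F(M)$. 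Hence $F|_{\mathscr{T}_T}$ and $G|_{\mathscr{Y}_T}$ are mutually inverse equivalences between $\mathscr{T}_T$ and $\mathscr{Y}_T$. Similarly, for $X\in\mathscr{X}_T$ we have $\widetilde{G}(X)=\Sigma H^{-1}(\widetilde{G}(X))=\Sigma G'(X)$, while for $N\in\mathscr{F}_T$ the object $\widetilde{F}(\Sigma N)=\Sigma\widetilde{F}(N)$ lies in $\mathscr{M}_B$, so it equals $H^0(\Sigma\widetilde{F}(N))=H^1(\widetilde{F}(N))=F'(N)$; hence $F'|_{\mathscr{F}_T}$ and $G'|_{\mathscr{X}_T}$ are mutually inverse equivalences between $\mathscr{F}_T$ and $\mathscr{X}_T$.

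The step needing the most care is the degree bookkeeping in the second and third paragraphs: reading off, via the HRS description of $\cb$, which cohomology of $\widetilde{G}(Y)$ (resp. of $\widetilde{F}(M)$) is the torsion part and which the torsion-free part, and checking that the shift $\Sigma$ built into the heart $\cb$ matches the shift built into the definitions of $F'$ and $G'$. Everything else is formal. As a cross-check on (b) one can argue independently: by Lemma~\ref{lem:n-term-silting} the object $\Sigma\Lambda$ is a $2$-term silting object of $\per(\Lambda)$ with respect to $T$, so $S':=\widetilde{F}(\Sigma\Lambda)$ is a $2$-term silting object of $\per(\Gamma)$ (because $\widetilde{F}$ is a triangle equivalence with $\widetilde{F}(T)\cong\Gamma$); an adjunction computation then identifies $\mathscr{T}_{S'}$ with $\mathscr{X}_T$ and $\mathscr{F}_{S'}$ with $\mathscr{Y}_T$, and Proposition~\ref{prop:torsion-pair-from-2-term-silting} applied to $\Gamma$ yields (b).
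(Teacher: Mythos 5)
Your proof is correct and uses the same basic ingredients as the paper; the difference is mainly organisational. The paper first proves part~(c) directly: it establishes, via the triangle $\Lambda\to T'\to T''\to\Sigma\Lambda$ with $T',T''\in\add(T)$ coming from Lemma~\ref{lem:n-term-silting}, that $\widetilde G(Y)$ has cohomology concentrated in degrees $-1,0$ for $Y\in\mathscr M_B$, from which the restricted equivalences are read off; then in~(b) it invokes the fact that $\widetilde G$ is $t$-exact, hence restricts to an equivalence of hearts $\mathscr M_B\to\cb$. You take the heart equivalence $\mathscr M_B\to\cb$ as the single organising principle right from the start, and deduce both the degree constraint on $\widetilde G(Y)$ and parts~(b),(c) from it together with the HRS description of $\cb$ in Proposition~\ref{prop:torsion-pair-from-2-term-silting}. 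This buys you a slightly more uniform argument -- you never need to run the triangle computation separately -- at the cost of leaning a bit harder on the $t$-structure formalism and the HRS bijection. The underlying ideas are the same; your cross-check for~(b) via $S'=\widetilde F(\Sigma\Lambda)$ is exactly what the paper records later as Theorem~\ref{thm:silting-theorem-1-equivalences}(a). One small point that you leave implicit, and which the paper also leaves implicit, is the well-definedness of the restricted functors (e.g.\ that $G$ actually lands in $\mathscr T_T$ when applied to $\mathscr Y_T$); this follows immediately from $H^*(\widetilde F\widetilde G(Y))\cong H^*(Y)$ but is worth a sentence.
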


\begin{proof}
(a) This is the first statement of Proposition~\ref{prop:torsion-pair-from-2-term-silting}.

(c) For $Y\in\mathscr{M}_B$, we claim that the dg module $\widetilde{G}(Y)$ has cohomologies concentrated in degrees $0$ and $-1$. Thus if $Y\in\mathscr{Y}_T$, then $G(Y)\cong \widetilde{G}(Y)$. Similarly, for $M\in\mathscr{T}_T$ we have $F(M)\cong \widetilde{F}(M)$. It follows that $F|_{\mathscr{T}_T}\colon\mathscr{T}_T\to\mathscr{Y}_T$ and $G|_{\mathscr{Y}_T}\colon\mathscr{Y}_T\to\mathscr{T}_T$ are mutually quasi-inverse equivalences. The statement for $F'$ and $G'$ is proved similarly.

Now we prove the claim. By Lemma~\ref{lem:n-term-silting}, $\Sigma\Lambda$ is $2$-term with respect to $T$, so there is a triangle 
\[
\xymatrix{
\Lambda\ar[r] & T'\ar[r] & T''\ar[r] & \Sigma\Lambda
}
\]
with $T',~T''\in\add(T)$. Applying $\Hom_{\cd(\Lambda)}(?,\widetilde{G}(Y))$ to this triangle we obtain an exact sequence
\[
\xymatrix{
\Hom_{\cd(\Lambda)}(T',\Sigma^{i}\widetilde{G}(Y))\ar[r] & H^i(\widetilde{G}(Y))\ar[r] & \Hom_{\cd(\Lambda)}(T'',\Sigma^{i+1}\widetilde{G}(Y)).
}
\]
But $\Hom_{\cd(\Lambda)}(T',\Sigma^i\widetilde{G}(Y))\cong\Hom_{\cd(\Gamma)}(\widetilde{F}(T'),\Sigma^{i}Y)$ vanishes unless $i=0$ since $\widetilde{F}(T')\in\add(\Gamma)$, and similarly $\Hom_{\cd(\Lambda)}(T'',\Sigma^{i+1}\widetilde{G}(Y))$ vanishes unless $i=-1$. The claim follows.

(b) By Proposition~\ref{prop:torsion-pair-from-2-term-silting}, the $t$-structure $(\mathbf{D}_T^{\leq 0},\mathbf{D}_T^{\geq 0})$ is the HRS tilt of the standard $t$-structure on $\mathbf{D}(\Lambda)$ at the torsion pair $(\mathscr{T}_T,\mathscr{F}_T)$. Therefore $(\Sigma\mathscr{F}_T,\mathscr{T}_T)$ is a torsion pair of its heart $\cb$, by Construction~\ref{constr:HRS-tilt}. Moreover, the triangle equivalence $\widetilde{G}\colon\mathbf{D}(\Gamma)\to\mathbf{D}(\Lambda)$ takes the standard $t$-structure on $\mathbf{D}(\Gamma)$ to $(\mathbf{D}_T^{\leq 0},\mathbf{D}_T^{\geq 0})$, in particular, it restricts to an equivalence from $\mathscr{M}_B$ to $\cb$, and by the proof of (c), it takes the pair $(\mathscr{X}_T,\mathscr{Y}_T)$ to $(\Sigma\mathscr{F}_T,\mathscr{T}_T)$. It follows that $(\mathscr{X}_T,\mathscr{Y}_T)$ is a torsion pair of $\mathscr{M}_B$.
\end{proof}

\subsection{The first step}
\label{ss:silting-theorem-first-step}

Let $\Lambda$ be a non-positive dg $k$-algebra, $A=H^0(\Lambda)$, and let $T$ be a $2$-term silting object in $\per(\Lambda)$ which is $\ck$-projective, $\Gamma$ the truncated dg endomorphism algebra of $T$ (see Section~\ref{sss:truncated-dg-endo-algebra-of-a-silting}), and $B=H^0(\Gamma)=\End_{\cd(\Lambda)}(T)$.  Recall from Section~\ref{sss:truncated-dg-endo-algebra-of-a-silting} that there is an adjoint pair of triangle equivalences
\[
\xymatrix{
\cd(\Lambda) \ar@<.7ex>[d]^{\RHom_\Lambda(T,?)} \\ 
\cd(\Gamma).\ar@<.7ex>[u]^{?\lten_{\Gamma}T}
}
\]
Consider the functors
\[
\xymatrix{
\mathscr{M}_A \ar@<.7ex>[d]^{F=H^0\RHom_\Lambda(T,?)} \\ 
\mathscr{M}_B,\ar@<.7ex>[u]^{G=H^0(?\lten_\Gamma T)}
}\qquad
\xymatrix{
\mathscr{M}_A \ar@<.7ex>[d]^{F'=H^1\RHom_\Lambda(T,?)} \\ 
\mathscr{M}_B.\ar@<.7ex>[u]^{G'=H^{-1}(?\lten_\Gamma T)}
}
\]
and put
\begin{align*}
\mathscr{T}_T&=\{M\in\mathscr{M}_A\mid F'(M)=0\}, ~~ \mathscr{F}_T=\{N\in\mathscr{M}_A\mid F(N)=0\},\\
\mathscr{X}_T&=\{X\in\mathscr{M}_B\mid G(X)=0\}, ~~ \mathscr{Y}_T=\{Y\in\mathscr{M}_B\mid G'(Y)=0\}.
\end{align*}

Applying Theorem~\ref{thm:silting-theorem-1-abstract-version} we obtain
\begin{theorem} 
\label{thm:silting-theorem-1-torsion-pairs}
\begin{itemize}
\item[(a)] $(\mathscr{T}_T,\mathscr{F}_T)$ is a torsion pair of $\mathscr{M}_A$.
\item[(b)] $(\mathscr{X}_T,\mathscr{Y}_T)$ is a torsion pair of $\mathscr{M}_B$.
\item[(c)] There are equivalences
\[
\xymatrix{
\mathscr{T}_T \ar@<.7ex>[d]^{F|_{\mathscr{T}_T}} \\
\mathscr{Y}_T,\ar@<.7ex>[u]^{G|_{\mathscr{Y}_T}}
}\qquad
\xymatrix{
\mathscr{F}_T \ar@<.7ex>[d]^{F'|_{\mathscr{F}_T}} \\ 
\mathscr{X}_T,\ar@<.7ex>[u]^{G'|_{\mathscr{X}_T}}
}
\]
\end{itemize}
\end{theorem}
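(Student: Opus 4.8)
The plan is to simply invoke the abstract machinery already in place. First I would observe that the whole point of the preceding subsection is to package exactly this kind of situation: we have non-positive dg $k$-algebras $\Lambda$ and $\Gamma$ with $A=H^0(\Lambda)$ and $B=H^0(\Gamma)$, and the adjoint pair of triangle equivalences $\RHom_\Lambda(T,?)$ and $?\lten_\Gamma T$ between $\cd(\Lambda)$ and $\cd(\Gamma)$ coming from Section~\ref{sss:truncated-dg-endo-algebra-of-a-silting}. So I would set $\widetilde{F}=\RHom_\Lambda(T,?)$ and $\widetilde{G}=?\lten_\Gamma T$ and check that the hypotheses of Theorem~\ref{thm:silting-theorem-1-abstract-version} are met.

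The one thing that genuinely needs verification is that the silting object produced by the abstract setup, namely $\widetilde{G}(\Gamma)=\Gamma\lten_\Gamma\Gamma\cong T$, is the $2$-term silting object we started with. This is immediate from the construction of $\Gamma$ as the truncated dg endomorphism algebra: $\Gamma$ acts on $T$ via the representation map, and the derived tensor functor sends the free rank-one module to $T$ itself. Since $T$ is assumed $2$-term in $\per(\Lambda)$, the $2$-term hypothesis of Theorem~\ref{thm:silting-theorem-1-abstract-version} holds. I would then note that the functors $F,F',G,G'$ defined in this subsection via $H^0\RHom_\Lambda(T,?)$, $H^1\RHom_\Lambda(T,?)$, $H^0(?\lten_\Gamma T)$, $H^{-1}(?\lten_\Gamma T)$ are precisely the instances of the abstract $F=H^0\circ\widetilde{F}$, $F'=H^1\circ\widetilde{F}$, $G=H^0\circ\widetilde{G}$, $G'=H^{-1}\circ\widetilde{G}$; and that the four subcategories $\mathscr{T}_T,\mathscr{F}_T,\mathscr{X}_T,\mathscr{Y}_T$ defined here by vanishing of $F',F,G,G'$ coincide verbatim with those in the abstract version. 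With these identifications the statement is word-for-word Theorem~\ref{thm:silting-theorem-1-abstract-version}.

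There is essentially no obstacle here — the subsection is deliberately engineered so that this theorem is a direct corollary. If I wanted to be careful I would spend one sentence confirming that $\mathscr{T}_T$ and $\mathscr{F}_T$ as defined by $F'(M)=0$ and $F(N)=0$ agree with the earlier definitions in Section~\ref{s:2-term-silting-objects} (where they were given by $\Hom_{\mathbf{D}(\Lambda)}(T,\Sigma M)=0$ and $\Hom_{\mathbf{D}(\Lambda)}(T,M)=0$); but this is exactly the chain of isomorphisms $F\cong\Hom_{\cd(\Lambda)}(T,?)|_{\mathscr{M}_A}$ and $F'\cong\Hom_{\cd(\Lambda)}(T,\Sigma?)|_{\mathscr{M}_A}$ recorded in the abstract subsection, which in turn follow from $H^0\RHom_\Lambda(T,?)=H^0\cHom_\Lambda(T,?)=\Hom_{\cd(\Lambda)}(T,?)$ together with \eqref{eq:morphisms-in-derived-category} and the fact that $T$ is $\ck$-projective. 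So the proof is really the single line: ``Apply Theorem~\ref{thm:silting-theorem-1-abstract-version} with $\widetilde{F}=\RHom_\Lambda(T,?)$ and $\widetilde{G}=?\lten_\Gamma T$, noting $\widetilde{G}(\Gamma)\cong T$.''
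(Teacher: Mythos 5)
Your proof is correct and is exactly the paper's proof: the authors also simply invoke Theorem~\ref{thm:silting-theorem-1-abstract-version} with $\widetilde{F}=\RHom_\Lambda(T,?)$ and $\widetilde{G}=?\lten_\Gamma T$, and the subsection is indeed set up so that the functors $F,F',G,G'$ and categories $\mathscr{T}_T,\mathscr{F}_T,\mathscr{X}_T,\mathscr{Y}_T$ match the abstract version verbatim. (One small slip: $\widetilde{G}(\Gamma)=\Gamma\lten_\Gamma T\cong T$, not $\Gamma\lten_\Gamma\Gamma$, but the conclusion is unaffected.)
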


Following Section~\ref{sss:truncated-dg-endo-algebra-of-a-silting}, let $\widetilde{S}=\RHom_\Lambda(T,\Sigma \Lambda)=\cHom_\Lambda(T,\Sigma \Lambda)$, which is a $2$-term silting object of $\per(\Gamma)$. Moreover, $\widetilde{S}$ is naturally a dg $\Lambda$-$\Gamma$-bimodule, and there are isomorphisms of triangle equivalences:  
\[
\RHom_\Lambda(T,?)\cong \Sigma^{-1}(?\lten_\Lambda \widetilde{S})~\text{and}~
?\lten_{\Gamma}T\cong\Sigma\RHom_{\Gamma}(\widetilde{S},?).
\] 
Consider
\[
\xymatrix{
\mathscr{M}_B \ar@<.7ex>[d]^{\Phi=H^0\RHom_\Gamma(\widetilde{S},?)} \\ 
\mathscr{M}_A,\ar@<.7ex>[u]^{\Psi=H^0(?\lten_\Lambda \widetilde{S})}
}\qquad
\xymatrix{
\mathscr{M}_B \ar@<.7ex>[d]^{\Phi'=H^1\RHom_\Gamma(\widetilde{S},?)} \\ 
\mathscr{M}_A.\ar@<.7ex>[u]^{\Psi'=H^{-1}(?\lten_\Lambda \widetilde{S})}
}
\]
Then by Theorem~\ref{thm:silting-theorem-1-abstract-version}, there is a torsion pair $(\mathscr{T}_{\widetilde{S}},\mathscr{F}_{\widetilde{S}})$ of $\mathscr{M}_B$ and a torsion pair $(\mathscr{X}_{\widetilde{S}},\mathscr{Y}_{\widetilde{S}})$ of $\mathscr{M}_A$, where
\begin{align*}
\mathscr{T}_{\widetilde{S}}&=\{M\in \mathscr{M}_{B}\mid \Phi'(M)=0\}=\{M\in \mathscr{M}_{B}\mid \Hom_{\mathcal{D}(\Gamma)}(\widetilde{S},\Sigma M)=0\},\\
\mathscr{F}_{\widetilde{S}}&=\{N\in\mathscr{M}_{B}\mid \Phi(N)=0\}=\{N\in\mathscr{M}_{B}\mid \Hom_{\mathcal{D}(\Gamma)}(\widetilde{S},N)=0\},\\
\mathscr{X}_{\widetilde{S}}&=\{X\in\mathscr{M}_A\mid \Psi(X)=0\},\\
\mathscr{Y}_{\widetilde{S}}&=\{Y\in\mathscr{M}_A\mid \Psi'(Y)=0\}.
\end{align*}
The above isomorphisms of triangle functors imply isomorphisms 
\[\Phi\cong G',~\Phi'\cong G,~\Psi\cong F',~\text{and}~\Psi'\cong F.\]
Therefore we have

\begin{theorem}
\label{thm:silting-theorem-1-equivalences}
\begin{itemize}
\item[(a)]
$(\mathscr{T}_{\widetilde{S}},\mathscr{F}_{\widetilde{S}})=(\mathscr{X}_T,\mathscr{Y}_T)$ and $(\mathscr{X}_{\widetilde{S}},\mathscr{Y}_{\widetilde{S}})=(\mathscr{T}_T,\mathscr{F}_T)$.
\item[(b)] The functors $F'=\Hom_{\cd(\Lambda)}(T,\Sigma?)$ and $\Phi=\Hom_{\cd(\Gamma)}(\widetilde{S},?)$ restrict to inverse equivalences between $\mathscr{F}_T$ and $\mathscr{T}_{\widetilde{S}}$.
\item[(c)] The functors $F=\Hom_{\cd(\Lambda)}(T,?)$ and $\Phi'=\Hom_{\cd(\Gamma)}(\widetilde{S},\Sigma ?)$ restrict to inverse equivalences between $\mathscr{T}_T$ and $\mathscr{F}_{\widetilde{S}}$.
\end{itemize}
\end{theorem}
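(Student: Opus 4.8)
The plan is to deduce all three parts of the statement in a purely formal way from material already assembled just before it: the silting theorem for $T$, namely Theorem~\ref{thm:silting-theorem-1-torsion-pairs}; the instance of Theorem~\ref{thm:silting-theorem-1-abstract-version} applied to $\widetilde{S}\in\per(\Gamma)$, which in particular provides the torsion pairs $(\mathscr{T}_{\widetilde{S}},\mathscr{F}_{\widetilde{S}})$ and $(\mathscr{X}_{\widetilde{S}},\mathscr{Y}_{\widetilde{S}})$ together with the equivalences coming from $\Phi,\Phi',\Psi,\Psi'$; and the isomorphisms of triangle functors $\Phi\cong G'$, $\Phi'\cong G$, $\Psi\cong F'$, $\Psi'\cong F$ that follow from $\RHom_\Lambda(T,?)\cong\Sigma^{-1}(?\lten_\Lambda\widetilde{S})$ and $?\lten_\Gamma T\cong\Sigma\RHom_\Gamma(\widetilde{S},?)$.

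Part (a) is obtained by unwinding the definitions of the eight full subcategories and inserting the four functor isomorphisms; since naturally isomorphic functors send the same objects to $0$, one gets $\mathscr{T}_{\widetilde{S}}=\{M\in\mathscr{M}_B\mid\Phi'(M)=0\}=\{M\in\mathscr{M}_B\mid G(M)=0\}=\mathscr{X}_T$, and likewise $\mathscr{F}_{\widetilde{S}}=\{N\mid\Phi(N)=0\}=\{N\mid G'(N)=0\}=\mathscr{Y}_T$, $\mathscr{X}_{\widetilde{S}}=\{X\mid\Psi(X)=0\}=\{X\mid F'(X)=0\}=\mathscr{T}_T$, and $\mathscr{Y}_{\widetilde{S}}=\{Y\mid\Psi'(Y)=0\}=\{Y\mid F(Y)=0\}=\mathscr{F}_T$. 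These four identities are precisely the two asserted equalities of (ordered) torsion pairs; that both sides of each are torsion pairs is already known, so nothing further is required.

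For parts (b) and (c), I would invoke Theorem~\ref{thm:silting-theorem-1-torsion-pairs}(c), which says that $F'$ and $G'$ restrict to mutually inverse equivalences between $\mathscr{F}_T$ and $\mathscr{X}_T$, and that $F$ and $G$ restrict to mutually inverse equivalences between $\mathscr{T}_T$ and $\mathscr{Y}_T$. Feeding in the identities $\mathscr{X}_T=\mathscr{T}_{\widetilde{S}}$ and $\mathscr{Y}_T=\mathscr{F}_{\widetilde{S}}$ from part (a), and replacing $G'$ by $\Phi$ and $G$ by $\Phi'$ (legitimate because $G'\cong\Phi$ and $G\cong\Phi'$), one gets inverse equivalences between $\mathscr{F}_T$ and $\mathscr{T}_{\widetilde{S}}$ given by $F'$ and $\Phi$, and inverse equivalences between $\mathscr{T}_T$ and $\mathscr{F}_{\widetilde{S}}$ given by $F$ and $\Phi'$. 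To recover the exact form printed in the statement, one finally records $F'=H^1\RHom_\Lambda(T,?)\cong\Hom_{\cd(\Lambda)}(T,\Sigma?)$ on $\mathscr{M}_A$ (using the $\ck$-projectivity of $T$ and \eqref{eq:morphisms-in-derived-category}), $\Phi=H^0\RHom_\Gamma(\widetilde{S},?)\cong\Hom_{\cd(\Gamma)}(\widetilde{S},?)$, and symmetrically $F\cong\Hom_{\cd(\Lambda)}(T,?)$ and $\Phi'\cong\Hom_{\cd(\Gamma)}(\widetilde{S},\Sigma?)$.

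The proof has no genuine mathematical obstacle; the one thing to be careful about is the bookkeeping. Concretely, when Theorem~\ref{thm:silting-theorem-1-abstract-version} is applied to $\widetilde{S}$ the roles of $\Lambda$ and $\Gamma$ (hence of $A=H^0(\Lambda)$ and $B=H^0(\Gamma)$, and of $\mathscr{M}_A$ and $\mathscr{M}_B$) are interchanged relative to the application to $T$, and one must track the degree shifts in $\RHom_\Lambda(T,?)\cong\Sigma^{-1}(?\lten_\Lambda\widetilde{S})$ and $?\lten_\Gamma T\cong\Sigma\RHom_\Gamma(\widetilde{S},?)$ carefully so that the cohomological degrees $0$ and $\pm1$ line up to yield $\Phi\cong G'$, $\Phi'\cong G$, $\Psi\cong F'$, $\Psi'\cong F$. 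All the real work has already been done in the earlier results.
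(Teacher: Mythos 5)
Your proposal is correct and is essentially the paper's own argument spelled out in full: the paper states the four isomorphisms $\Phi\cong G'$, $\Phi'\cong G$, $\Psi\cong F'$, $\Psi'\cong F$ and then presents the theorem as an immediate consequence, exactly by the kind of definition-unwinding and substitution into Theorem~\ref{thm:silting-theorem-1-torsion-pairs}(c) (and Theorem~\ref{thm:silting-theorem-1-abstract-version} applied to $\widetilde{S}$) that you carry out. Your bookkeeping of the degree shifts and of the interchanged roles of $\Lambda$ and $\Gamma$ is accurate, so nothing is missing.
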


\subsection{The second step}
\label{ss:silting-theroem-second-step}
Keep the notation and assumptions as in the beginning of Section~\ref{ss:silting-theorem-first-step}. 
Since $\Sigma \Lambda$ is a 2-term silting object of $\per(\Lambda)$ with respect to $T$, there is a triangle in $\per(\Lambda)$:
\begin{align}
\label{eq:approximation-triangle-for-shift-of-Lambda}
\xymatrix{
\Lambda\ar[r]  & T'\ar[r]^f & T''\ar[r] & \Sigma \Lambda
}
\end{align}
with $T',~T''\in\add_{\per(\Lambda)}(T)$. By applying $\Hom_{\per(\Lambda)}(T,?)$, we obtain a homomorphism of projective $B$-modules
\[
\xymatrix@C=5pc{
\Hom_{\per(\Lambda)}(T,T^{\prime})\ar[r]^{\Hom(T,f)} & \Hom_{\per(\Lambda)}(T,T^{\prime\prime}).
}
\]
Consider the canonical projection $p\colon\Gamma\to B$, which induces two triangle functors $p_*\colon \per(\Gamma)\to\per(B)=K^b(\proj B)$ and $p^*\colon\mathbf{D}(B)\to\mathbf{D}(\Gamma)$. Recall that $\widetilde{S}=\RHom_\Lambda(T,\Sigma \Lambda)$ is a $2$-term silting object of $\per(\Gamma)$. Let $S=p_*(\widetilde{S})$ and $\bar{A}=\End_{\mathcal{D}(\Mod B)}(S)$.
The part (a) below says that the complex $S$ is the same as the complex $\mathbf{Q}$ in \cite[Theorem 1.1]{BuanZhou16}.

\begin{theorem} 
\label{thm:silting-theorem-2-the-silting-complex}
\begin{itemize}
\item[(a)] $S\cong \Cone(\Hom(T,f))$;
\item[(b)] $S$ is a 2-term silting complex in $K^{b}(\proj B)$;
\item[(c)] There is a surjective algebra homomorphism $\pi\colon A\rightarrow \bar{A}$. If $T$ is tilting, then $\pi$ is an isomorphism.
\end{itemize}
\end{theorem}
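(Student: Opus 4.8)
The plan is to carry Theorem~\ref{thm:buan-zhou-theorem}(b)--(e) across the derived equivalence $\RHom_\Lambda(T,?)\colon\cd(\Lambda)\to\cd(\Gamma)$ (with quasi-inverse $?\lten_\Gamma T$) and then down to $B$ along the canonical projection $p\colon\Gamma\to B=H^0(\Gamma)$. For (a), apply $\RHom_\Lambda(T,?)$ to the triangle \eqref{eq:approximation-triangle-for-shift-of-Lambda}. Since this equivalence sends $\Sigma\Lambda$ to $\widetilde{S}$ and sends $T',T''\in\add_{\per(\Lambda)}(T)$ into $\add_{\cd(\Gamma)}(\Gamma)$ (because $\RHom_\Lambda(T,T)\cong\Gamma$ in $\cd(\Gamma)$), one obtains a triangle in $\per(\Gamma)$
\[
\RHom_\Lambda(T,T')\xrightarrow{\RHom_\Lambda(T,f)}\RHom_\Lambda(T,T'')\to\widetilde{S}\to\Sigma\RHom_\Lambda(T,T')
\]
with the first two terms in $\add_{\cd(\Gamma)}(\Gamma)$. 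Applying $p_*\colon\per(\Gamma)\to\per(B)=K^b(\proj B)$ and using that the square \eqref{eq:inducting-free-modules}, together with $H^0(p)=\id_B$, identifies $p_*$ on $\add_{\cd(\Gamma)}(\Gamma)$ with the equivalence $H^0\colon\add_{\cd(\Gamma)}(\Gamma)\to\proj B$, the pushed-down triangle becomes (a triangle isomorphic to) $\Hom_{\per(\Lambda)}(T,T')\xrightarrow{\Hom(T,f)}\Hom_{\per(\Lambda)}(T,T'')\to S\to\Sigma(\cdots)$ with the first two terms projective $B$-modules in degree $0$. Reading off the cone gives $S\cong\Cone(\Hom(T,f))$, which is (a). Part (b) is then immediate: $\widetilde{S}$ is a $2$-term silting object of $\per(\Gamma)$, so by Proposition~\ref{prop:passing-to-the-0th-cohomology-silting-and-t-str}(a) its image $S=p_*(\widetilde{S})$ is a $2$-term silting object of $\per(B)=K^b(\proj B)$, and by (a) it is literally a $2$-term complex, hence a $2$-term silting complex.

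For (c), I would define $\pi$ as the composite
\[
A=\End_{\cd(\Lambda)}(\Lambda)\cong\End_{\cd(\Lambda)}(\Sigma\Lambda)\xrightarrow{\simeq}\End_{\cd(\Gamma)}(\widetilde{S})\xrightarrow{p_*}\End_{\cd(B)}(S)=\End_{\cd(\Mod B)}(S)=\bar{A},
\]
the middle isomorphism coming from the equivalence $\RHom_\Lambda(T,?)$ which sends $\Sigma\Lambda$ to $\widetilde{S}$; this is exactly the homomorphism induced by $p_*$. To prove that $\pi$ is surjective I would pass to the adjunction $(p_*,p^*)$ with unit $\eta$: under the isomorphism $\Hom_{\cd(B)}(S,S)\cong\Hom_{\cd(\Gamma)}(\widetilde{S},p^*p_*\widetilde{S})$, naturality of $\eta$ identifies $p_*\colon\End_{\cd(\Gamma)}(\widetilde{S})\to\End_{\cd(B)}(S)$ with $\Hom_{\cd(\Gamma)}(\widetilde{S},\eta_{\widetilde{S}})$, and from the triangle $\widetilde{S}\xrightarrow{\eta_{\widetilde{S}}}p^*p_*\widetilde{S}\to C\to\Sigma\widetilde{S}$ the cokernel of this map embeds into $\Hom_{\cd(\Gamma)}(\widetilde{S},C)$. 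So everything reduces to showing $\Hom_{\cd(\Gamma)}(\widetilde{S},C)=0$.

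This vanishing is the heart of the matter. For $R\in\add_{\cd(\Gamma)}(\Gamma)$ the unit $\eta_R$ is a finite direct sum of copies of $p\colon\Gamma\to B$; since $H^0(p)$ is an isomorphism while $\Gamma$ and $B$ have no cohomology in positive degrees and $B$ none in negative degrees, the long exact cohomology sequence gives $H^i(\Cone(\eta_R))=0$ for all $i\geq-1$. Writing $\widetilde{S}$ via a triangle $R_0\to\widetilde{S}\to\Sigma R_1\to\Sigma R_0$ with $R_0,R_1\in\add_{\cd(\Gamma)}(\Gamma)$ (possible since $\widetilde{S}$ is $2$-term with respect to $\Gamma$), naturality of $\eta$ and the octahedral axiom produce a triangle $\Cone(\eta_{R_0})\to C\to\Sigma\Cone(\eta_{R_1})\to\Sigma\Cone(\eta_{R_0})$, so $C$ too has cohomology concentrated in degrees $\leq-2$; moreover $C$ is bounded, since $\widetilde{S}\in\per(\Gamma)\subseteq\cd^b(\Gamma)$ and $p^*p_*\widetilde{S}=p^*(S)\in\cd^b(\Gamma)$. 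On the other hand, applying $\Hom_{\cd(\Gamma)}(?,\Sigma^jN)$ to the triangle $R_1\to R_0\to\widetilde{S}\to\Sigma R_1$ and using \eqref{eq:morphisms-vs-cohomologies} shows that $\Hom_{\cd(\Gamma)}(\widetilde{S},\Sigma^jN)=0$ for every $B$-module $N$ and every $j\notin\{0,1\}$. A dévissage of $C$ through its standard truncation triangles, whose subquotients are shifts $\Sigma^jH^{-j}(C)$ with $j\geq2$ of $B$-modules, then forces $\Hom_{\cd(\Gamma)}(\widetilde{S},C)=0$, and hence $\pi$ is surjective. Finally, if $T$ is tilting then $H^i(\cEnd_\Lambda(T))=\Hom_{\cd(\Lambda)}(T,\Sigma^iT)=0$ for $i\neq0$, so $\Gamma=\sigma^{\leq0}\cEnd_\Lambda(T)$ has cohomology concentrated in degree $0$ and $p\colon\Gamma\to B$ is a quasi-isomorphism; by Corollary~\ref{cor:when-pushing-out-is-an-equivalence} the functor $p_*$ is then a triangle equivalence, so it induces an isomorphism on endomorphism rings and $\pi$ is an isomorphism.

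I expect the main obstacle to be precisely the vanishing $\Hom_{\cd(\Gamma)}(\widetilde{S},C)=0$ in the surjectivity argument: one must pin down $\Cone(p\colon\Gamma\to B)$ in cohomological degrees $\leq-2$ (the fact that it is $\leq-2$, not merely $\leq-1$, is what makes the final dévissage go through), propagate this bound to $C=\Cone(\eta_{\widetilde{S}})$ via the octahedral axiom, and then run a dévissage that genuinely uses the $2$-term-ness of $\widetilde{S}$. Everything else is a formal consequence of the derived equivalence, the adjunction $(p_*,p^*)$, and results already established in the paper.
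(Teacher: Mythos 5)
Your arguments for (a) and (b) coincide with the paper's: apply $\RHom_\Lambda(T,?)$ to the triangle \eqref{eq:approximation-triangle-for-shift-of-Lambda}, push down along $p_*$ using the identification of $p_*$ with $H^0$ on $\add(\Gamma)$ via \eqref{eq:inducting-free-modules}, and invoke Proposition~\ref{prop:passing-to-the-0th-cohomology-silting-and-t-str}(a). For the ``$T$ tilting implies $\pi$ isomorphism'' half of (c) you pass to the quasi-isomorphism $p\colon\Gamma\to B$ and apply Corollary~\ref{cor:when-pushing-out-is-an-equivalence}; the paper instead reads vanishing of $\ker(p_*)$ off the explicit description. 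Both are correct, and your route is the one the paper itself uses later for Corollary~\ref{cor:T-tilting=>S-tilting}.

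The genuine difference is the surjectivity of $\pi$. The paper cites \cite[Proposition A.5]{BruestleYang13}; you give a self-contained proof. Identifying $p_*\colon\End_{\cd(\Gamma)}(\widetilde{S})\to\End_{\cd(B)}(S)$ via the adjunction $(p_*,p^*)$ with $\Hom_{\cd(\Gamma)}(\widetilde{S},\eta_{\widetilde{S}})$, and reducing to $\Hom_{\cd(\Gamma)}(\widetilde{S},C)=0$ for $C=\Cone(\eta_{\widetilde{S}})$, is correct, and so is the computation $H^i(\Cone(p))=0$ for $i\geq -1$ and its propagation to $C$ via the $3\times 3$ lemma. This is a clean replacement for the external citation; what it buys is an explicit picture of why the degree bound $\leq -2$ on $C$ is exactly what makes $2$-term-ness of $\widetilde{S}$ suffice. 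There is, however, one overreach: you assert $\per(\Gamma)\subseteq\cd^b(\Gamma)$ so that the final d\'evissage of $C$ through its truncation triangles is finite. This inclusion can fail for a general non-positive dg algebra $\Gamma$ with unbounded cohomology (e.g.\ $\widetilde{S}=\Sigma\Gamma$), which is not excluded in the generality of Section~\ref{ss:silting-theroem-second-step}, so $C$ need not be bounded. But you do not need it to be: you already have the triangle $R_1\to R_0\to\widetilde{S}\to\Sigma R_1$ with $R_0,R_1\in\add(\Gamma)$, and applying $\Hom_{\cd(\Gamma)}(?,C)$ yields the exact sequence $\Hom(\Sigma R_1,C)\to\Hom(\widetilde{S},C)\to\Hom(R_0,C)$, whose outer terms are, by \eqref{eq:morphisms-vs-cohomologies}, direct summands of finite powers of $H^{-1}(C)$ and $H^0(C)$ respectively, hence zero. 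This gives $\Hom_{\cd(\Gamma)}(\widetilde{S},C)=0$ without any boundedness, and with this replacement your proof is complete.
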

\begin{proof}
(a) Applying the triangle equivalence $\RHom_\Lambda(T,?)$ to the triangle \eqref{eq:approximation-triangle-for-shift-of-Lambda}, we obtain a triangle in $\per(\Gamma)$:
\[
\xymatrix@C=1pc{
\RHom_\Lambda(T,\Lambda)\ar[r]  & \RHom_\Lambda(T,T')\ar[rrr]^{\RHom_\Lambda(T,f)} &&& \RHom_\Lambda(T,T'')\ar[r] & \widetilde{S}.
}
\]
Applying $p_*$ we obtain a triangle in $K^b(\proj B)$:
\[
\xymatrix@C=1pc{
\Sigma^{-1}S\ar[r]  & p_*\RHom_\Lambda(T,T')\ar[rrrr]^{p_*\RHom_\Lambda(T,f)} &&&& p_*\RHom_\Lambda(T,T'')\ar[r] & S.
}
\]
Recall that restricted on $\add_{\per(\Gamma)}(\Gamma_\Gamma)$ the induction functor $p_*$ is isomorphic to the equivalence $H^0\colon \add_{\per(\Gamma)}(\Gamma_\Gamma)\to \proj B$. Thus the above map $p_*\RHom_\Lambda(T,f)$ is isomorphic to $H^0\RHom_\Lambda(T,f)=\Hom(T,f)$. This shows that $S\cong\Cone(\Hom(T,f))$.

(b) This follows from Proposition~\ref{prop:passing-to-the-0th-cohomology-silting-and-t-str}(a).

(c) There is a chain of algebra homomorphisms
\[
\xymatrix@C=1.1pc{
A=\End_{\per(\Lambda)}(\Lambda)\ar[r]^(0.52){\Sigma} &\End_{\per(\Lambda)}(\Sigma \Lambda)\ar[rrr]^{\RHom_\Lambda(T,?)} &&& \End_{\per(\Gamma)}(\widetilde{S})\ar[r]^(0.4){p_*} & \End_{K^b(\proj B)}(S)=\bar{A}.
}
\]
We define $\pi\colon A\to\bar{A}$ as the composition. The first two homomorphisms, induced by equivalences, are isomorphisms, and the third one, induced by $p_*$, is surjective with kernel the homomorphisms $\widetilde{S}\to\widetilde{S}$ which factors through a morphism $\Sigma P_1\to P_2$ with $P_1,P_2\in\add_{\per(\Gamma)}(\Gamma)$, by \cite[Proposition A.5]{BruestleYang13}. If $T$ is a tilting complex, then $\Hom_{\per(\Gamma)}(\Sigma\Gamma,\Gamma)\cong\Hom_{\per(\Lambda)}(\Sigma T,T)=0$, so the third homomorphism is also an isomorphism, so is $\pi$. 
\end{proof}


Now let $\bar{\Lambda}$ be the truncated dg endomorphism algebra of $S$. Then $H^0(\bar{\Lambda})=\bar{A}$. Applying Section~\ref{ss:silting-theorem-first-step} to the 2-term silting object $S$ of $K^b(\proj B)\simeq\per(B)$, we obtain two adjoint pairs
\[
\xymatrix{
\mathscr{M}_B \ar@<.7ex>[d]^{\bar{\Phi}=H^0\RHom_B(S,?)} \\ 
\mathscr{M}_{\bar{A}},\ar@<.7ex>[u]^{\bar{\Psi}=H^0(?\lten_{\bar{\Lambda}}S)}
}\qquad
\xymatrix{
\mathscr{M}_{B} \ar@<.7ex>[d]^{\bar{\Phi}'=H^1\RHom_B(S,?)}\\
\mathscr{M}_{\bar{A}},\ar@<.7ex>[u]^{\bar{\Psi}'=H^{-1}(?\lten_{\bar{\Lambda}}S)}
}
\]
There is a torsion pair $(\mathscr{T}_S,\mathscr{F}_S)$ of $\mathscr{M}_B$ and a torsion pair $(\mathscr{X}_S,\mathscr{Y}_S)$ of $\mathscr{M}_{\bar{A}}$ together with equivalences
\[
\xymatrix{
\mathscr{T}_S\ar@<.7ex>[d]^{\bar{\Phi}|_{\mathscr{T}_S}}\\ 
\mathscr{Y}_S,\ar@<.7ex>[u]^{\bar{\Psi}|_{\mathscr{Y}_S}}
}\qquad
\xymatrix{
\mathscr{F}_S \ar@<.7ex>[d]^{\bar{\Phi}'|_{\mathscr{F}_S}}\\ 
\mathscr{X}_S,\ar@<.7ex>[u]^{\bar{\Psi}'|_{\mathscr{X}_S}}
}
\]
where
\begin{align*}
\mathscr{T}_S&=\{M\in\mathscr{M}_B\mid \bar{\Phi}'(M)=0\}, ~~ \mathscr{F}_S=\{N\in\mathscr{M}_B\mid \bar{\Phi}(N)=0\};\\
\mathscr{X}_S&=\{X\in\mathscr{M}_{\bar{A}}\mid \bar{\Psi}(X)=0\}, ~~ \mathscr{Y}_S=\{Y\in\mathscr{M}_{\bar{A}}\mid \bar{\Psi}'(Y)=0\}.
\end{align*}

\begin{theorem}
\label{thm:silting-theorem-2-equivalences}
\begin{itemize}
\item[(a)] $(\mathscr{T}_S,\mathscr{F}_S)=(\mathscr{X}_T,\mathscr{Y}_T)$;
\item[(b)] The functors $F'=\Hom_{\cd(\Lambda)}(T,\Sigma?)$ and $\pi^*\Hom_{\cd(B)}(S,?)=\pi^*\bar{\Phi}$ restrict to inverse equivalences between $\mathscr{F}_T$ and $\mathscr{T}_S$.
\item[(c)] The functors $F=\Hom_{\cd(\Lambda)}(T,?)$ and $\pi^*\Hom_{\cd(B)}(S,\Sigma ?)=\pi^*\bar{\Phi}'$ restrict to inverse equivalences between $\mathscr{T}_T$ and $\mathscr{F}_S$.
\item[(d)] $\pi^*(\mathscr{Y}_S)=\mathscr{F}_T$ and $\pi^*(\mathscr{X}_S)=\mathscr{T}_T$.
\end{itemize}
\end{theorem}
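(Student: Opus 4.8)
\textbf{Proof proposal for Theorem~\ref{thm:silting-theorem-2-equivalences}.}

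The plan is to exploit the fact that $S$ is the 2-term silting object $p_*(\widetilde{S})$ of $\per(B)$, and to compare the ``silting data'' attached to $S$ (via Section~\ref{ss:silting-theorem-first-step} applied to the base dg algebra $\Gamma$ with its 2-term silting object $\widetilde{S}$, together with the projection $p\colon\Gamma\to B$) with the silting data attached to $\widetilde{S}$ that was already analysed in Theorem~\ref{thm:silting-theorem-1-equivalences}. Concretely, I would first observe that the truncated dg endomorphism algebra $\bar\Lambda$ of $\widetilde S$ (over $\Gamma$) and the truncated dg endomorphism algebra of $S$ (over $B$) are related: the equivalence $p_*\colon\per(\Gamma)\to\per(B)$ is induced by restriction of scalars along $p$ on the level of derived categories, and it sends $\widetilde S$ to $S$, hence induces a quasi-isomorphism between the corresponding truncated dg endomorphism algebras (both have $H^0$ equal to $\bar A$). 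Thus $\bar\Phi$, $\bar\Phi'$, $\bar\Psi$, $\bar\Psi'$ built from $S$ over $B$ correspond, under the equivalence, to the functors built from $\widetilde S$ over $\Gamma$; but those latter functors are exactly the ones appearing in Theorem~\ref{thm:silting-theorem-1-equivalences} applied with the roles of the two algebras as set up in Section~\ref{ss:silting-theorem-first-step}.

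For part (a): by Theorem~\ref{thm:silting-theorem-1-equivalences}(a) applied to $\widetilde S$ we already have $(\mathscr{T}_{\widetilde S},\mathscr{F}_{\widetilde S})=(\mathscr X_T,\mathscr Y_T)$ as torsion pairs of $\mathscr M_B$. So it suffices to check that the torsion pair $(\mathscr T_S,\mathscr F_S)$ of $\mathscr M_B$ defined via $\bar\Phi'$ coincides with $(\mathscr T_{\widetilde S},\mathscr F_{\widetilde S})$ defined via $\Phi'$ (the functor $H^1\RHom_\Gamma(\widetilde S,?)$). This reduces to showing that for $M\in\mathscr M_B$ one has $\Hom_{\cd(B)}(S,\Sigma^i M)\cong\Hom_{\cd(\Gamma)}(\widetilde S,\Sigma^i M)$ for all $i$, where on the right $M$ is viewed as a $\Gamma$-module via $p$; this is precisely the adjunction isomorphism $\Hom_{\cd(B)}(p_*\widetilde S,\Sigma^i M)\cong\Hom_{\cd(\Gamma)}(\widetilde S,\Sigma^i p^*M)$ together with $p^*M=M$ on hearts. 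Hence $\bar\Phi\cong\Phi|_{\mathscr M_B}$ and $\bar\Phi'\cong\Phi'|_{\mathscr M_B}$ as functors $\mathscr M_B\to\mathscr M_{\bar A}$ (using $H^0(\bar\Lambda)=\bar A=\End_{\per(\Gamma)}(\widetilde S)$, with the identification via $\pi$), which immediately gives (a), and then (b) and (c) follow by combining Theorem~\ref{thm:silting-theorem-1-equivalences}(b),(c) with these identifications, after twisting the $\bar A$-action to an $A$-action along the surjection $\pi\colon A\to\bar A$ (that is the role of $\pi^*$). Part (d) is the torsion-pair-level shadow of (b) and (c): since $\pi^*\colon\mathscr M_{\bar A}\hookrightarrow\mathscr M_A$ is fully faithful and exact, and $(\mathscr X_S,\mathscr Y_S)$ corresponds under $\bar\Psi,\bar\Psi'$ to $(\mathscr X_T,\mathscr Y_T)=(\mathscr T_S,\mathscr F_S)$, one reads off $\pi^*(\mathscr Y_S)=\mathscr F_T$ and $\pi^*(\mathscr X_S)=\mathscr T_T$ directly from the definitions $\mathscr T_T=\{F'=0\}$, $\mathscr F_T=\{F=0\}$ and the isomorphisms $F\cong\pi^*\bar\Psi'$-type relations established in (b),(c); equivalently, one transports the torsion pair of the heart of the HRS-tilt along the triangle equivalences.

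The main obstacle I expect is bookkeeping the identifications of algebras and the module-category twists cleanly: one must be careful that ``$\bar A=\End_{\cd(\Mod B)}(S)$'' as defined before Theorem~\ref{thm:silting-theorem-2-the-silting-complex} is the same as ``$H^0$ of the truncated dg endomorphism algebra of $S$'' and the same (via $\pi$) as the image of $A$, and that the $\pi^*$ appearing in (b)--(d) is genuinely restriction along this surjection — so that ``$\pi^*\Hom_{\cd(B)}(S,?)$'' literally means: compute the $\bar A$-module $\bar\Phi(M)$, then regard it as an $A$-module via $\pi$. Once these identifications are pinned down, the substance is entirely supplied by Theorem~\ref{thm:silting-theorem-1-equivalences} (with $\Gamma$, $\widetilde S$ in place of $\Lambda$, $T$) plus the elementary adjunction $(p_*,p^*)$ restricted to hearts; no new homological input is needed. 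A secondary subtlety is checking that the cohomological amplitude claims (e.g.\ that $\widetilde G$-type functors land in degrees $0,-1$) used implicitly in transporting equivalences along $\widetilde F,\widetilde G$ go through in this second layer, but this is handled verbatim as in the proof of Theorem~\ref{thm:silting-theorem-1-abstract-version}(c) since $S$ is again 2-term.
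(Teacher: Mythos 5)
Your working argument --- reduce to Theorem~\ref{thm:silting-theorem-1-equivalences} applied to $\widetilde{S}$ over $\Gamma$, compare $\bar\Phi$ with $\Phi$ via the adjunction $(p_*,p^*)$ restricted to the heart, and deduce (d) from (b) and (c) --- is essentially the paper's. But the motivating framework contains a genuine error that you should excise, because taken literally it contradicts the theorem it is meant to help prove. You assert that $p_*\colon\per(\Gamma)\to\per(B)$ is an equivalence and hence that the truncated dg endomorphism algebras of $\widetilde S$ over $\Gamma$ and of $S$ over $B$ are quasi-isomorphic, ``both having $H^0$ equal to $\bar A$''. This is false in general: $p_*$ is an equivalence only when $p\colon\Gamma\to B$ is a quasi-isomorphism (Corollary~\ref{cor:when-pushing-out-is-an-equivalence}), that is, only when $T$ is tilting. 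Since $\RHom_\Lambda(T,?)\colon\cd(\Lambda)\to\cd(\Gamma)$ is a triangle equivalence taking $\Sigma\Lambda$ to $\widetilde S$, the truncated dg endomorphism algebra of $\widetilde S$ over $\Gamma$ has $H^0=\End_{\per(\Gamma)}(\widetilde S)\cong A$, \emph{not} $\bar A$; the surjection $\pi\colon A\to\bar A$ of Theorem~\ref{thm:silting-theorem-2-the-silting-complex}(c) is precisely the comparison between these two $H^0$'s, and it is non-injective exactly when $T$ is not tilting. Your claimed quasi-isomorphism would force $\pi$ to be an isomorphism always, which Theorem~\ref{thm:buan-zhou-theorem}(e) says it is not.

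The practical consequence is that $\Phi=H^0\RHom_\Gamma(\widetilde S,?)$ lands in $\mathscr M_A$, not $\mathscr M_{\bar A}$, so the statement ``$\bar\Phi\cong\Phi|_{\mathscr M_B}$ as functors $\mathscr M_B\to\mathscr M_{\bar A}$'' is not well-posed. The correct identification, and the one the paper proves, is $\Phi|_{\mathscr M_B}\cong\pi^*\bar\Phi$ as functors $\mathscr M_B\to\mathscr M_A$. To obtain it you must check not only the $k$-linear adjunction isomorphism $\Hom_{\cd(B)}(p_*\widetilde S,M)\cong\Hom_{\cd(\Gamma)}(\widetilde S,p^*M)$, which you do, but also that it is compatible with the $A$-actions on both sides (on the left through $\pi$, on the right through $A\cong\End_{\per(\Gamma)}(\widetilde S)$); the paper verifies this with the commutative square involving $(p_*\RHom_\Lambda(T,\Sigma a))^*$ and $(\RHom_\Lambda(T,\Sigma a))^*$ and the explicit construction of $\pi$. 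Your phrase ``after twisting the $\bar A$-action to an $A$-action along $\pi$'' acknowledges the need for $\pi^*$, but this twist should be built in from the start, and the $A$-linearity of the adjunction isomorphism is the one nontrivial check you still owe. Once that is supplied, parts (a)--(c) go through as you say, and your argument for (d) (both $\bar\Phi|_{\mathscr T_S}$ and $\pi^*\bar\Phi|_{\mathscr T_S}$ are equivalences, hence $\pi^*$ restricts to an equivalence $\mathscr Y_S\to\mathscr F_T$, and dually for the other pair) is the paper's.
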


\begin{proof} 
By Theorem~\ref{thm:silting-torsionpair-tstr-along-projection} we have $(\mathscr{T}_S,\mathscr{F}_S)=(\mathscr{T}_{\widetilde{S}},\mathscr{F}_{\widetilde{S}})$.

(a) This follows from Theorem~\ref{thm:silting-theorem-1-equivalences}(a). 

(b) Since $ \mathscr{T}_{\widetilde{S}}=\mathscr{T}_S$, by Theorem~\ref{thm:silting-theorem-1-equivalences}(b) it suffices to prove that there is an isomorphism $\Phi|_{\mathscr{T}_S}\cong(\pi^*\bar{\Phi})|_{\mathscr{T}_S}$, \ie $\Hom_{\cd(\Gamma)}(\widetilde{S},?)|_{\mathscr{T}_S}\cong(\pi^*\Hom_{\cd(B)}(S,?))|_{\mathscr{T}_S}$. For this, recall that $p_*$ is left adjoint to $p^*$, and hence there is a commutative diagram for any $a\in A$ and any $M\in\cd(B)$
\[
\xymatrix{
\Hom_{\cd(B)}(S,M)\ar@{=}[r] & \Hom_{\cd(B)}(p_*(\widetilde{S}),M)\ar[r]\ar[d]_{(p_*\RHom_\Lambda(T,\Sigma a))^*} & \Hom_{\cd(\Gamma)}(\widetilde{S},p^*(M))\ar[d]_{(\RHom_\Lambda(T,\Sigma a))^*}\\
\Hom_{\cd(B)}(S,M)\ar@{=}[r] & \Hom_{\cd(B)}(p_*(\widetilde{S}),M)\ar[r] & \Hom_{\cd(\Gamma)}(\widetilde{S},p^*(M))
}
\]
where the two horizontal arrows are isomorphisms of $k$-modules. Going through the definition of the algebra homomorphism $\pi$ in the proof of Theorem~\ref{thm:silting-theorem-2-the-silting-complex}(c), we see that this exactly shows that $\pi^*\Hom_{\cd(B)}(S,M)$ and $\Hom_{\cd(\Gamma)}(\widetilde{S},p^*(M))$ are isomorphic as $A$-modules. Since $p^*|_{\mathscr{M}_B}$ is the identity functor, it follows that $(\pi^*\Hom_{\cd(B)}(S,?))|_{\mathscr{T}_S}\cong\Hom_{\cd(\Gamma)}(\widetilde{S},?)|_{\mathscr{T}_S}$.

(c) Similar to (b).

(d) Because both $\bar{\Phi}|_{\mathscr{T}_S}\colon\mathscr{T}_S\to\mathscr{Y}_S$ and $(\pi^*\bar{\Phi})|_{\mathscr{T}_S}\colon\mathscr{T}_S\to\mathscr{F}_T$ are equivalences, we deduce that $\pi^*$ restricts to an equivalence $\mathscr{Y}_S\to\mathscr{F}_T$, in particular, $\pi^*(\mathscr{Y}_S)=\mathscr{F}_T$. The other equality is proved similarly.
\end{proof}

\subsection{Special case: $\Lambda=A$}
\label{ss:special-case-algebra}
Let $A$ be a $k$-algebra, $T$ a $2$-term silting complex in $K^b(\proj A)$, $\Gamma$ the truncated dg endomorphism algebra of $T$, and $B=H^0(\Gamma)=\End_{\cd(A)}(T)$.  Fix one of the following settings:
\begin{itemize}
\item[(1)] $\mathbf{D}=\cd^b$, $\mathscr{M}=\Mod$;
\item[(2)] $k$ is a noetherian ring, $\mathbf{D}=\cd_{fg}$, $\mathscr{M}=\mod$;
\item[(3)] $k$ is a field, $\mathbf{D}=\cd_{fd}$, $\mathscr{M}=\fdmod$.
\end{itemize}

\begin{theorem} 
\label{thm:silting-theorem-algebra}
\begin{itemize}
\item[(a)] There is a torsion pair $(\mathscr{T}_T,\mathscr{F}_T)$ of $\mathscr{M}_A$ and a torsion pair $(\mathscr{X}_T,\mathscr{Y}_T)$ of $\mathscr{M}_B$ together with natural equivalences $\Hom_{\mathcal{D}(A)}(T,?)\colon\mathscr{T}_T\stackrel{\simeq}{\longrightarrow} \mathscr{Y}_T$ and $\Hom_{\mathcal{D}(A)}(T,\Sigma ?)\colon\mathscr{F}_T\stackrel{\simeq}{\longrightarrow}\mathscr{X}_T$.
\item[(b)] Let $S\cong \Cone(\Hom(T,f))$.
\item[(c)] $S$ is a 2-term silting complex in $K^{b}(\proj B)$.
\item[(d)] There is a surjective algebra homomorphism  $\pi\colon A\rightarrow \bar{A}=\End_{K^b(\proj B)}(S)$.
\item[(e)] $\pi$ is an isomorphism if and only if $T$ is tilting.
\item[(f)] The functors $\Hom_{\cd(A)}(T,?)$ and $\pi^*\Hom_{\cd(B)}(S,\Sigma ?)$ restrict to inverse equivalences between $\mathscr{T}_T$ and $\mathscr{F}_S$.
\item[(g)] The functors $\Hom_{\cd(A)}(T,\Sigma?)$ and $\pi^*\Hom_{\cd(B)}(S,?)$ restrict to inverse equivalences between $\mathscr{F}_T$ and $\mathscr{T}_S$.
\item[(h)] $\mathscr{T}_S=\mathscr{X}_T$ and $\mathscr{F}_S=\mathscr{Y}_T$; $\pi^*(\mathscr{Y}_S)=\mathscr{F}_T$ and $\pi^*(\mathscr{X}_S)=\mathscr{T}_T$.
\end{itemize}
\end{theorem}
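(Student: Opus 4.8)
The plan is to obtain Theorem~\ref{thm:silting-theorem-algebra} by specialising the dg-algebra results already proved to the non-positive dg algebra $\Lambda=A$, concentrated in degree $0$, so that $H^0(\Lambda)=A$. First I would record the dictionary of Section~\ref{ss:restriction-of-silting-t-structures}: since $A$ is an ordinary ring, $\cd(A)=\cd(\Mod A)$, $\per(A)\simeq K^b(\proj A)$, the category $\mathbf{D}(A)$ is $\cd^b(\Mod A)$, $\cd^b(\mod A)$ or $\cd^b(\fdmod A)$ in the respective settings, and $\mathscr{M}_{H^0(A)}=\mathscr{M}_A$; the same holds for $B$, so that $\bar A=\End_{\cd(\Mod B)}(S)=\End_{K^b(\proj B)}(S)$. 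I would then replace $T$ by an isomorphic bounded complex of finitely generated projective $A$-modules (which is $\ck$-projective over $A$) and put $\Gamma=\sigma^{\leq 0}\cEnd_A(T)$, its truncated dg endomorphism algebra, so that $B=H^0(\Gamma)=\End_{K^b(\proj A)}(T)$ and the subcategories $\mathscr{T}_T,\mathscr{F}_T,\mathscr{X}_T,\mathscr{Y}_T,\mathscr{T}_S,\mathscr{F}_S,\mathscr{X}_S,\mathscr{Y}_S$ are those introduced in Sections~\ref{ss:silting-theorem-first-step} and~\ref{ss:silting-theroem-second-step} for $\Lambda=A$.

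With this in hand, most of the theorem is a direct transcription. Using the natural isomorphisms $F\cong\Hom_{\cd(A)}(T,?)|_{\mathscr{M}_A}$, $F'\cong\Hom_{\cd(A)}(T,\Sigma?)|_{\mathscr{M}_A}$, $\bar\Phi\cong\Hom_{\cd(B)}(S,?)|_{\mathscr{M}_B}$ and $\bar\Phi'\cong\Hom_{\cd(B)}(S,\Sigma?)|_{\mathscr{M}_B}$ from Sections~\ref{ss:silting-theorem-first-step}--\ref{ss:silting-theroem-second-step}, part (a) is Theorem~\ref{thm:silting-theorem-1-torsion-pairs}(a)--(c); parts (b), (c), (d) together with the implication ``$T$ tilting $\Rightarrow$ $\pi$ an isomorphism'' are Theorem~\ref{thm:silting-theorem-2-the-silting-complex}(a)--(c); parts (f) and (g) are Theorem~\ref{thm:silting-theorem-2-equivalences}(c) and~(b); and part (h) collects Theorem~\ref{thm:silting-theorem-2-equivalences}(a) and~(d).

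The only genuinely new point is the converse in (e): $\pi$ an isomorphism $\Rightarrow$ $T$ tilting. I would first reduce the target to a property of $\Gamma$. Since $T$ is a $2$-term complex of projectives, $\Hom_{\cd(A)}(T,\Sigma^iT)=0$ for $i\leq-2$ (no overlap of degrees, and no homotopies), while it vanishes for $i>0$ because $T$ is presilting; hence $T$ is tilting if and only if $\Hom_{\cd(A)}(T,\Sigma^{-1}T)=0$, that is, $H^{-1}(\Gamma)=0$, that is (by Corollary~\ref{cor:when-pushing-out-is-an-equivalence}) the canonical projection $p\colon\Gamma\to B$ is a quasi-isomorphism. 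So it suffices to prove that $\pi$ an isomorphism forces $H^{-1}(\Gamma)=0$. Recall from the proof of Theorem~\ref{thm:silting-theorem-2-the-silting-complex}(c) that, up to two isomorphisms, $\pi$ is the map induced by $p_*$ on $\End_{\per(\Gamma)}(\widetilde S)$, and that by \cite[Proposition A.5]{BruestleYang13} its kernel is the ideal of endomorphisms of $\widetilde S$ that factor through a morphism $\Sigma P_1\to P_2$ with $P_1,P_2\in\add_{\per(\Gamma)}(\Gamma)$; such morphisms range over $\Hom_{\per(\Gamma)}(\Sigma P_1,P_2)\cong\Hom_{\per(\Gamma)}(P_1,\Sigma^{-1}P_2)$, which is assembled from copies of $H^{-1}(\Gamma)$. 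Applying $\RHom_A(T,?)$ to the triangle~\eqref{eq:approximation-triangle-for-shift-of-Lambda} (with $\Lambda=A$) produces a triangle $\Sigma^{-1}\widetilde S\to Q_1\xrightarrow{\sigma} Q_0\xrightarrow{u}\widetilde S\xrightarrow{v}\Sigma Q_1$ with $Q_0,Q_1\in\add\Gamma$, and, since the non-positivity of $\Gamma$ forces every morphism $\widetilde S\to\Sigma P_1$ (resp.\ $P_2\to\widetilde S$) to factor through $v$ (resp.\ $u$), one checks that $\ker(\pi)=\{\,u\,g\,v\mid g\in\Hom_{\per(\Gamma)}(\Sigma Q_1,Q_0)\,\}$ with $\Hom_{\per(\Gamma)}(\Sigma Q_1,Q_0)\cong\Hom_{\cd(A)}(T',\Sigma^{-1}T'')$. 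The plan is then to choose the triangle~\eqref{eq:approximation-triangle-for-shift-of-Lambda} so that $\add T'=\add T=\add T''$ (which is possible since $\widetilde S$, hence $\pi$, does not depend on this choice), so that $\Hom_{\cd(A)}(T',\Sigma^{-1}T'')\neq0$ precisely when $H^{-1}(\Gamma)\neq0$, and to show that in that case the assignment $g\mapsto u\,g\,v$ cannot be identically zero --- equivalently, that $\pi$ an isomorphism makes $p_*\colon\per(\Gamma)\to\per(B)$ fully faithful, hence a triangle equivalence. (In the finite-dimensional setting a dimension count would also do.) I expect this last step --- verifying that $\ker(\pi)$ genuinely detects the non-formality of $\Gamma$, i.e.\ that it is nonzero whenever $H^{-1}(\Gamma)\neq0$, controlling possible phantom-type cancellation in $\per(\Gamma)$ --- to be the main obstacle.
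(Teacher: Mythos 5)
Your specialisation strategy for parts (a)--(d), (f)--(h), and the implication ``$T$ tilting $\Rightarrow\pi$ isomorphism'' coincides with the paper's own proof: the theorem is literally the case $\Lambda=A$ of Theorems~\ref{thm:silting-theorem-1-torsion-pairs}, \ref{thm:silting-theorem-2-the-silting-complex} and~\ref{thm:silting-theorem-2-equivalences}, with the identifications $F\cong\Hom_{\cd(A)}(T,?)$, $F'\cong\Hom_{\cd(A)}(T,\Sigma?)$, $\bar\Phi\cong\Hom_{\cd(B)}(S,?)$, $\bar\Phi'\cong\Hom_{\cd(B)}(S,\Sigma?)$ that you record. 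That much is correct.

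The converse in (e) is where the gap is, and you are right to flag it. In fact, the paper's authors make no claim to prove it by dg methods: their proof explicitly says they ``do not have a proof different from'' the one in \cite[Proposition 4.1]{BuanZhou16}, and they simply cite it. Your sketch has two concrete soft spots at exactly the place you point to. First, the stated ``equivalence'' between $\pi$ being an isomorphism and $p_*$ being fully faithful is not justified: $\pi$ only controls $\Hom_{\per(\Gamma)}(\widetilde S,\widetilde S)\to\Hom_{\per(B)}(S,S)$, whereas full faithfulness, by d\'evissage from $\thick(\widetilde S)=\per(\Gamma)$, also needs control of $\Hom(\widetilde S,\Sigma^i\widetilde S)\to\Hom(S,\Sigma^i S)$ for $i<0$. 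Second, and more seriously, the identification $\ker(\pi)=\{ugv\mid g\colon\Sigma Q_1\to Q_0\}$ is fine, but nothing in your argument rules out that $u(-)v$ is identically zero even when $H^{-1}(\Gamma)\neq 0$ (this is the ``phantom cancellation'' worry you name). Closing this would require some form of minimality of the approximation triangle~\eqref{eq:approximation-triangle-for-shift-of-Lambda} (a minimal left $\add T$-approximation of $A$), which your enlarging trick $\add T'=\add T=\add T''$ destroys rather than supplies; this is one reason the authors chose to defer to Buan--Zhou rather than re-prove (e)$\Leftarrow$ in the dg framework. So the proposal matches the paper everywhere except this one implication, for which your sketch, as you say yourself, is not a proof.
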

\begin{proof}
We apply Theorems~\ref{thm:silting-theorem-2-the-silting-complex}~and~\ref{thm:silting-theorem-2-equivalences} in this special case. It remains to show that if $\pi$ is an isomorphism then $T$ is tilting. We do not have a proof different from the one given in the end of the proof of \cite[Proposition 4.1]{BuanZhou16}. 
\end{proof}

\begin{remark} Let us show that $\pi$ is exactly the map $\Phi$ defined in \cite[Proposition 4.1]{BuanZhou16}. Then in the setting (3) the statements (a)--(g) form Theorem~\ref{thm:buan-zhou-theorem} and the second statement in (h) is exactly \cite[Theorem 4.4]{BuanZhou16}.

Let $a\in A=\End_A(A)$. According to \cite[Section 4]{BuanZhou16} there is a morphism of triangles in $\per(A)$
\[
\xymatrix{
A\ar[r] \ar[d]^a & T'\ar[r]^f \ar[d]^b & T''\ar[r] \ar[d]^c & \Sigma A\ar[d]^{\Sigma a}\\
A\ar[r]  & T'\ar[r]^f & T''\ar[r] & \Sigma A
}
\]
Thus $\Sigma A\cong \Cone(f)$ in $K^b(\proj A)$  and one checks by the construction of $b$ and $c$ that under this isomorphism $\Sigma a\in\End(\Sigma A)$ corresponds to $(b,c)$. Now  applying $p_*\RHom_A(T,?)$ we see that $\pi$ is exactly the map $\Phi$.
\end{remark}

\subsection{A partial generalisation to $n$-term silting}
\label{ss:n-term-silting}

When the third author gave a talk on Theorem~\ref{thm:silting-theorem-algebra} in USTC in early 2021, Yu Ye asked whether it can be genelised to the case when $T$ is an $n$-term silting object. In this subsection we partially generalise Section~\ref{ss:silting-theorem-first-step}. We do not know whether Section~\ref{ss:silting-theroem-second-step} can be easily generalised, because it is not known whether or not $S$ is a silting object if $T$ is not $2$-term.
Keep the notation and assumptions as in the beginning of Section~\ref{ss:silting-theorem-first-step} except that here we assume that $T$ is $n$-term ($n\geq 2$).

\medskip
For $0\leq i\leq n-1$, consider the functor
\[
\xymatrix{
\mathscr{M}_A \ar@<.7ex>[d]^{F^i=H^i\RHom_\Lambda(T,?)}  \\ 
\mathscr{M}_B\ar@<.7ex>[u]^{G^i=H^{-i}(?\lten_\Gamma T)} 
}
\]
and put
\begin{align*}
\mathscr{T}^i_T&=\{M\in\mathscr{M}_A\mid F^j(M)=0~\forall~j\neq i\}, \\
\mathscr{Y}^i_T&=\{Y\in\mathscr{M}_B\mid G^j(Y)=0~\forall~j\neq i\}.
\end{align*}
The following result was established in the literature as
\begin{itemize}
\item[-] \cite[Theorem 1.16]{Miyashita86}, if we are in Setting (1) and if further $A$ is a ring and $T$ is a tilting $A$-module;
\item[-] \cite[Chapter III, Theorem 3.2]{Happel88}, if we are in Setting (3) and if further $A$ is a finite-dimensional $k$-algebra and $T$ is a tilting $A$-module;
\item[-] \cite[Corollary 2.6]{BreazModoi17}, if we are in Setting (1) and if further $A$ is a ring.
\end{itemize}  
The proof is similar to that of Theorem~\ref{thm:silting-theorem-1-torsion-pairs}(c).

\begin{theorem}
\label{thm:silting-theorem-3}
For $0\leq i\leq n-1$, there is an equivalence
\[
\xymatrix{
\mathscr{T}^i_T \ar@<.7ex>[d]^{F^i|_{\mathscr{T}^i_T}} \\
\mathscr{Y}^{i}_T\ar@<.7ex>[u]^{G^i|_{\mathscr{Y}^{i}_T}}
}
\]
\end{theorem}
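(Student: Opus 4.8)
The plan is to follow closely the proof of Theorem~\ref{thm:silting-theorem-1-abstract-version}(c), of which this statement is the $n$-term analogue; as there, the only real content lies in two cohomological concentration statements, after which the equivalence is formal. Write $\widetilde{F}=\RHom_\Lambda(T,?)\colon\cd(\Lambda)\to\cd(\Gamma)$ and $\widetilde{G}=~?\lten_{\Gamma}T\colon\cd(\Gamma)\to\cd(\Lambda)$ for the mutually quasi-inverse triangle equivalences recalled in Section~\ref{sss:truncated-dg-endo-algebra-of-a-silting}, so that $F^i=H^i\widetilde{F}$ and $G^i=H^{-i}\widetilde{G}$. By Proposition~\ref{prop:restriction-of-silting-t-structure-on-subcategories}(a) these restrict to quasi-inverse equivalences $\mathbf{D}(\Lambda)\simeq\mathbf{D}(\Gamma)$, and by Lemma~\ref{lem:standard-t-structure-on-unbounded-level} and Proposition~\ref{prop:restriction-of-silting-t-structure-on-subcategories}(b), $\mathscr{M}_A$ and $\mathscr{M}_B$ are the hearts of the standard $t$-structures on $\mathbf{D}(\Lambda)$ and $\mathbf{D}(\Gamma)$, so an object of $\cd(\Gamma)$ (resp. $\cd(\Lambda)$) lying in $\mathbf{D}$ and having cohomology concentrated in a single degree $m$ is isomorphic to $\Sigma^{-m}$ of an object of $\mathscr{M}_B$ (resp. $\mathscr{M}_A$).

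The first claim I would establish is that for $M\in\mathscr{M}_A$ the dg $\Gamma$-module $\widetilde{F}(M)$ has cohomology concentrated in degrees $0,1,\dots,n-1$: indeed $H^j\widetilde{F}(M)=\Hom_{\cd(\Lambda)}(T,\Sigma^j M)$, and since $T$ is $n$-term, i.e. $T\in\add(\Lambda)\ast\Sigma\add(\Lambda)\ast\cdots\ast\Sigma^{n-1}\add(\Lambda)$, d\'evissage reduces this to $\Hom_{\cd(\Lambda)}(\Sigma^l\Lambda,\Sigma^j M)=H^{j-l}(M)$ with $0\le l\le n-1$, which vanishes for $j\notin\{0,\dots,n-1\}$ because $M$ is concentrated in degree $0$. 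The second claim is that for $Y\in\mathscr{M}_B$ the dg $\Lambda$-module $\widetilde{G}(Y)$ has cohomology concentrated in degrees $-(n-1),\dots,-1,0$; for this I would invoke the second isomorphism of triangle functors in Section~\ref{sss:truncated-dg-endo-algebra-of-a-silting}, $\widetilde{G}\cong\Sigma^{n-1}\RHom_{\Gamma}(\widetilde{S},?)$, where $\widetilde{S}=\cHom_\Lambda(T,\Sigma^{n-1}\Lambda)$ is an $n$-term silting object of $\per(\Gamma)$, so that $H^{-j}\widetilde{G}(Y)\cong\Hom_{\cd(\Gamma)}(\widetilde{S},\Sigma^{\,n-1-j}Y)$; applying the first claim with $(\Lambda,T)$ replaced by $(\Gamma,\widetilde{S})$ shows this vanishes unless $0\le n-1-j\le n-1$, i.e. unless $0\le j\le n-1$. (Alternatively one can argue directly, using Lemma~\ref{lem:n-term-silting} to filter $\Lambda$ by shifts of $\add(T)$ and the fact that $\widetilde{F}(T)\cong\Gamma$; this second claim, with its slightly heavier $n$-term bookkeeping compared with the $2$-term case, is the main thing to get right.)

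Granting the two claims, I conclude as follows. If $M\in\mathscr{T}^i_T$, then $H^j\widetilde{F}(M)=F^j(M)=0$ for $j\ne i$ with $0\le j\le n-1$ by definition and for $j\notin\{0,\dots,n-1\}$ by the first claim, so $\widetilde{F}(M)$ has cohomology concentrated in degree $i$ and hence $\widetilde{F}(M)\cong\Sigma^{-i}F^i(M)$ in $\cd(\Gamma)$ with $F^i(M)\in\mathscr{M}_B$; applying $\widetilde{G}$ and using $\widetilde{G}\widetilde{F}\cong\id$ gives $\widetilde{G}(F^i(M))\cong\Sigma^{i}M$, so $G^i(F^i(M))\cong H^{-i}(\Sigma^{i}M)=M$ and $G^j(F^i(M))\cong H^{\,i-j}(M)=0$ for $j\ne i$, whence $F^i(M)\in\mathscr{Y}^i_T$ and $G^i\circ F^i|_{\mathscr{T}^i_T}\cong\id$. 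Symmetrically, if $Y\in\mathscr{Y}^i_T$, the second claim together with the defining vanishing forces $\widetilde{G}(Y)$ to be concentrated in degree $-i$, so $\widetilde{G}(Y)\cong\Sigma^{i}G^i(Y)$ with $G^i(Y)\in\mathscr{M}_A$, and then $\widetilde{F}(G^i(Y))\cong\Sigma^{-i}Y$ yields $F^i(G^i(Y))\cong Y$ and $F^j(G^i(Y))\cong H^{\,j-i}(Y)=0$ for $j\ne i$, so $G^i(Y)\in\mathscr{T}^i_T$ and $F^i\circ G^i|_{\mathscr{Y}^i_T}\cong\id$. All these isomorphisms are natural, being induced by the natural isomorphisms $\widetilde{F}\widetilde{G}\cong\id$, $\widetilde{G}\widetilde{F}\cong\id$ and the cohomology functors; hence $F^i$ and $G^i$ restrict to mutually inverse equivalences between $\mathscr{T}^i_T$ and $\mathscr{Y}^i_T$, which is the assertion.
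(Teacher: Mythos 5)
Your proof is correct and follows essentially the same strategy as the paper, which itself just says the proof is ``similar to that of Theorem~\ref{thm:silting-theorem-1-torsion-pairs}(c)'': establish the two cohomological concentration claims for $\widetilde{F}(M)$ and $\widetilde{G}(Y)$, then deduce the equivalence formally from $\widetilde{G}\widetilde{F}\cong\id\cong\widetilde{F}\widetilde{G}$ and the identification of the heart of the standard $t$-structure with $\mathscr{M}_B$ (resp.\ $\mathscr{M}_A$). The one minor variation is in the second claim: rather than directly applying the $n$-term analogue of the filtration $\Lambda\to T'\to T''\to\Sigma\Lambda$ (which is what the $2$-term proof does and what you note as the alternative), you route it through $\widetilde{S}=\cHom_\Lambda(T,\Sigma^{n-1}\Lambda)$ and $\widetilde{G}\cong\Sigma^{n-1}\RHom_\Gamma(\widetilde{S},?)$, which lets you cite your first claim with $(\Gamma,\widetilde{S})$ in place of $(\Lambda,T)$; this is a clean way to avoid redoing the d\'evissage bookkeeping and is logically equivalent to the direct filtration argument.
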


Following Section~\ref{sss:truncated-dg-endo-algebra-of-a-silting}, let $\widetilde{S}=\RHom_\Lambda(T,\Sigma^{n-1} \Lambda)=\cHom_\Lambda(T,\Sigma^{n-1} \Lambda)$, which is an $n$-term silting object of $\per(\Gamma)$. Moreover, $\widetilde{S}$ is naturally a dg $\Lambda$-$\Gamma$-bimodule, and there are isomorphisms of triangle equivalences:  
\[
\RHom_\Lambda(T,?)\cong \Sigma^{1-n}(?\lten_\Lambda \widetilde{S})~\text{and}~
?\lten_{\Gamma}T\cong\Sigma^{n-1}\RHom_{\Gamma}(\widetilde{S},?).
\] 
For $0\leq i\leq n-1$, consider the functor
\[
\xymatrix{
\mathscr{M}_B \ar@<.7ex>[d]^{\Phi^i=H^i\RHom_\Gamma(\widetilde{S},?)}  \\ 
\mathscr{M}_A,\ar@<.7ex>[u]^{\Psi^i=H^{-i}(?\lten_\Lambda \widetilde{S})} 
}
\]
and put
\begin{align*}
\mathscr{T}^i_{\tilde{S}}&=\{M\in\mathscr{M}_B\mid \Phi^j(M)=\Hom_{\cd(\Gamma)}(\widetilde{S},\Sigma^j M)=0~\forall~j\neq i\}, \\
\mathscr{Y}^i_{\tilde{S}}&=\{Y\in\mathscr{M}_B\mid \Psi^j(Y)=0~\forall~j\neq i\}.
\end{align*}
The above isomorphisms of triangle equivalences imply
\[
\Phi^i\cong G^{n-1-i},~\text{and}~\Psi^i\cong F^{n-1-i}.
\]
Therefore we have

\begin{theorem}
\label{thm:silting-theorem-3-comparison}
\begin{itemize}
\item[(a)] $\mathscr{T}^i_{\widetilde{S}}=\mathscr{Y}^{n-1-i}_T$ and $\mathscr{Y}^i_{\widetilde{S}}=\mathscr{T}^{n-1-i}_T$.
\item[(b)] The functors $F^i=\Hom_{\cd(\Lambda)}(T,\Sigma^i?)$ and $\Phi^{n-1-i}=\Hom_{\cd(\Gamma)}(\widetilde{S},\Sigma^{n-1-i} ?)$ restrict to inverse equivalences between $\mathscr{T}^i_T$ and $\mathscr{T}^{n-1-i}_{\widetilde{S}}$.
\end{itemize}
\end{theorem}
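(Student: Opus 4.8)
The plan is to deduce both assertions directly from Theorem~\ref{thm:silting-theorem-3} together with the isomorphisms of functors $\Phi^i\cong G^{n-1-i}$ and $\Psi^i\cong F^{n-1-i}$ recorded just above the statement. No new geometric or homological input is needed; the whole proof is a bookkeeping of the cohomological degree shift by $n-1$, exactly paralleling the passage from Theorem~\ref{thm:silting-theorem-1-torsion-pairs} to Theorem~\ref{thm:silting-theorem-1-equivalences} in the $2$-term case.

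For part (a) I would unwind the definitions and reindex. By definition $\mathscr{T}^i_{\widetilde{S}}$ is the full subcategory of $\mathscr{M}_B$ cut out by the vanishing of $\Phi^j$ for every $j\neq i$; substituting $\Phi^j\cong G^{n-1-j}$ turns this into the vanishing of $G^{n-1-j}$ for every $j\neq i$. Since $j\mapsto n-1-j$ is a self-bijection of $\{0,1,\dots,n-1\}$ carrying the complement of $\{i\}$ onto the complement of $\{n-1-i\}$, the resulting condition is precisely the one defining $\mathscr{Y}^{n-1-i}_T$, so $\mathscr{T}^i_{\widetilde{S}}=\mathscr{Y}^{n-1-i}_T$. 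Running the identical argument with $\Psi^j\cong F^{n-1-j}$ in place of $\Phi^j\cong G^{n-1-j}$ yields $\mathscr{Y}^i_{\widetilde{S}}=\mathscr{T}^{n-1-i}_T$.

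For part (b) I would combine part (a) with Theorem~\ref{thm:silting-theorem-3}. Applied to the $n$-term $\ck$-projective silting object $T$ of $\per(\Lambda)$, that theorem says $F^i$ and $G^i$ restrict to mutually inverse equivalences between $\mathscr{T}^i_T$ and $\mathscr{Y}^i_T$. Replacing $i$ by $n-1-i$ in part (a) identifies $\mathscr{Y}^i_T$ with $\mathscr{T}^{n-1-i}_{\widetilde{S}}$, so these equivalences are between $\mathscr{T}^i_T$ and $\mathscr{T}^{n-1-i}_{\widetilde{S}}$; finally the case $j=n-1-i$ of $\Phi^j\cong G^{n-1-j}$ reads $\Phi^{n-1-i}\cong G^i$, so the quasi-inverse $G^i$ may be replaced by $\Phi^{n-1-i}$. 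I do not anticipate any genuine obstacle here: the content is entirely carried by Theorem~\ref{thm:silting-theorem-3} and the degree-shift isomorphisms, and the only point demanding a moment's care is verifying that $j\mapsto n-1-j$ matches up the relevant families of vanishing conditions, which it plainly does.
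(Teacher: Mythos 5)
Your proof is correct and is exactly the paper's intended argument spelled out: the paper introduces $\widetilde{S}$, records $\Phi^i\cong G^{n-1-i}$ and $\Psi^i\cong F^{n-1-i}$, and then states Theorem~\ref{thm:silting-theorem-3-comparison} with only ``Therefore we have,'' so the reindexing $j\mapsto n-1-j$ and the appeal to Theorem~\ref{thm:silting-theorem-3} that you make explicit are precisely the omitted bookkeeping. (Incidentally, the paper's definition of $\mathscr{Y}^i_{\widetilde{S}}$ has a small typo — it should be a subcategory of $\mathscr{M}_A$, not $\mathscr{M}_B$, since $\Psi^j$ is defined on $\mathscr{M}_A$; your argument tacitly uses the corrected version.)
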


\section{When is $S$ tilting?} 
\label{s:stability}

Let $A$ be a $k$-algebra and $T$ be a $2$-term silting complex in $K^b(\proj A)$. In Section~\ref{ss:special-case-algebra}, we constructed a 2-term silting complex $S$. Yu Zhou proposed the following question.

\begin{question}
\label{ques:stability}
When is the 2-term silting complex $S$ tilting?
\end{question}

Because $\Sigma A$ is a tilting complex, it follows that $\tilde{S}$ is tilting in $\per(\Gamma)$. If $T$ is tilting, then $\Gamma$ has cohomology concentrated in degree $0$, so the projection $p\colon\Gamma\to B$ is a quasi-isomorphism, and $p_*\colon\cd(\Gamma)\to\cd(B)$ is a triangle equivalence. Therefore $S$ is tilting. This recovers the second part of \cite[Corollary 4.3]{BuanZhou16}.

\begin{corollary}
\label{cor:T-tilting=>S-tilting}
If $T$ is tilting, then $S$ is tilting.
\end{corollary}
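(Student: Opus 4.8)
The plan is to reduce the statement to the fact, already established in the excerpt, that a quasi-isomorphism of dg algebras induces a triangle equivalence on derived categories, which in turn preserves silting and tilting objects. First I would recall the setup from Section~\ref{ss:silting-theorem-first-step}: $\Gamma$ is the truncated dg endomorphism algebra of the $2$-term silting object $T$, so $H^0(\Gamma)=B=\End_{\cd(\Lambda)}(T)$, and $\widetilde S=\RHom_\Lambda(T,\Sigma\Lambda)$ is a $2$-term silting object of $\per(\Gamma)$; moreover $S=p_*(\widetilde S)$, where $p\colon\Gamma\to B$ is the canonical projection.

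The first key observation is that $\widetilde S$ is in fact \emph{tilting} in $\per(\Gamma)$. Indeed, $\Sigma\Lambda$ is a tilting object of $\per(\Lambda)$ (it is a shift of the free module, which is tilting), and the derived equivalence $\RHom_\Lambda(T,?)\colon\cd(\Lambda)\to\cd(\Gamma)$ sends it to $\widetilde S$; since triangle equivalences preserve the vanishing conditions $\Hom(-,\Sigma^i-)=0$ for $i\neq 0$ and the thick-generation property, $\widetilde S$ is tilting. The second key observation is that when $T$ is tilting, $\Gamma$ has cohomology concentrated in degree $0$: this is because $\cEnd_\Lambda(T)$ has cohomology $\Hom_{\cd(\Lambda)}(T,\Sigma^i T)$ in degree $i$ (by \eqref{eq:morphisms-in-derived-category}), which vanishes for $i\neq 0$ precisely when $T$ is tilting, and $\Gamma=\sigma^{\leq 0}\cEnd_\Lambda(T)$ has the same cohomology in non-positive degrees. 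Hence the projection $p\colon\Gamma\to H^0(\Gamma)=B$ is a quasi-isomorphism of dg algebras.

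The third step is to invoke Corollary~\ref{cor:when-pushing-out-is-an-equivalence}: since $p$ is a quasi-isomorphism, the induction functor $p_*\colon\cd(\Gamma)\to\cd(B)$ is a triangle equivalence. It therefore restricts to a triangle equivalence $\per(\Gamma)\to\per(B)\simeq K^b(\proj B)$ and sends the tilting object $\widetilde S$ to a tilting object. But $p_*(\widetilde S)=S$ by construction, so $S$ is a tilting complex in $K^b(\proj B)$, as desired.

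I do not anticipate a genuine obstacle here — the proof is essentially a two-line assembly of results already in the paper. The only point requiring minor care is making explicit that $\Gamma$ having cohomology in degree $0$ only is equivalent to the tilting condition on $T$ (as opposed to merely presilting), but this is immediate from the cohomology computation $H^i(\Gamma)=\Hom_{\cd(\Lambda)}(T,\Sigma^i T)$ for $i\leq 0$ together with the fact that $T$ is already presilting, so all positive-degree $\Hom$'s vanish automatically. Everything else is a direct appeal to Corollary~\ref{cor:when-pushing-out-is-an-equivalence} and the fact that triangle equivalences preserve tilting objects.
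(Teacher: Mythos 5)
Your proof is correct and follows essentially the same route as the paper: observe that $\widetilde S$ is tilting in $\per(\Gamma)$ (image of the tilting object $\Sigma\Lambda$ under a triangle equivalence), note that $T$ tilting forces $\Gamma$ to have cohomology concentrated in degree $0$, invoke Corollary~\ref{cor:when-pushing-out-is-an-equivalence} to conclude that $p_*$ is an equivalence, and hence $S=p_*(\widetilde S)$ is tilting. The only difference is that you spell out the cohomology computation $H^i(\Gamma)\cong\Hom_{\cd(\Lambda)}(T,\Sigma^i T)$ more explicitly, which the paper treats as self-evident.
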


But in general the answer to Question~\ref{ques:stability} is not known. 
In this section, we show that the answer is positive in certain special cases.

\begin{theorem}
\label{thm:stability}
Keep the notation as above. Then $S$ is tilting if one of the following conditions is satisfied:
\begin{itemize}
\item[(a)] $A$ is hereditary;
\item[(b)] $T$ is a left mutation of $A$ or a right mutation of $\Sigma A$, whenever the mutation is defined;
\item[(c)] $k$ is a field and $A$ is a finite-dimensional symmetric $k$-algebra.
\end{itemize}
\end{theorem}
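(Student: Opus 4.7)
The plan is to verify, in each case, the single nontrivial tilting condition $\Hom_{\cd(B)}(S, \Sigma^{-1} S) = 0$. The vanishing $\Hom_{\cd(B)}(S, \Sigma^{i} S) = 0$ for $i\geq 1$ holds because $S$ is silting by Theorem~\ref{thm:silting-theorem-algebra}(c), and for $i\leq -2$ it holds because $S$ is 2-term. A direct computation in $K^{b}(\proj B)$---writing $S=(P^{-1}\xrightarrow{g}P^{0})$, noting that a chain map $\varphi\colon S\to \Sigma^{-1}S$ is determined by $\varphi^{0}\colon P^{0}\to P^{-1}$ satisfying $g\varphi^{0}=\varphi^{0}g=0$, and observing that there are no nontrivial homotopies---shows
\[
\Hom_{\cd(B)}(S, \Sigma^{-1} S) \;\cong\; \Hom_{B}(H^{0}(S), H^{-1}(S)).
\]
Thus in each case the goal reduces to proving this concrete $\Hom$-group vanishes.

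For case (a), since $A$ has global dimension at most one, every object of $\per(A)=\cd^{b}(\mod A)$ splits as a direct sum of shifts of modules, so $T\cong N[1]\oplus M$ with $M,N\in\mod A$. Projectivity of $A$ gives $\Hom_{\cd(A)}(A,N[1])=\Ext^{1}_{A}(A,N)=0$, forcing the map $\alpha\colon A\to T'$ in the approximation triangle $A\to T'\to T''\to\Sigma A$ to factor through the $\add(M)$-summand of $T'$. Writing $T'=N_{1}[1]\oplus M_{1}$ and $\alpha_{2}\colon A\to M_{1}$ for the $M$-component, explicit calculation gives $H^{0}(S) = \Hom_{A}(N,A)\oplus \Ext^{1}_{A}(M,A)$ and $H^{-1}(S) = \alpha_{2}\cdot\Hom_{A}(M,A)\subseteq \Hom_{A}(M,M_{1})$. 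Decomposing $B=\End_{A}(N)\oplus\End_{A}(M)\oplus\Ext^{1}_{A}(M,N)$ and analysing the induced actions (in particular, the $\End_{A}(N)$-block acts trivially on $H^{-1}(S)$), a $B$-module map $H^{0}(S)\to H^{-1}(S)$ is forced to vanish on the $\Hom_{A}(N,A)$-summand and to reduce to an $\End_{A}(M)$-compatible morphism $\Ext^{1}_{A}(M,A)\to H^{-1}(S)$; the hereditary hypothesis then forces this morphism to vanish.

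For case (b), when $T=\mu^{-}_{A_{i}}(A)$ with mutation triangle $A_{i}\to B_{i}\to T_{i}\to\Sigma A_{i}$, one obtains the explicit approximation triangle $A\to B_{i}\oplus(A/A_{i})\to T_{i}\to\Sigma A$, in which the component $A/A_{i}\to T_{i}$ is zero. Applying $\Hom_{\cd(A)}(T,-)$ yields a splitting $S\cong S'\oplus\Sigma Q$ with $Q=\Hom_{\cd(A)}(T,A/A_{i})\in\proj B$ and $S'=\Cone(\Hom(T,B_{i})\to\Hom(T,T_{i}))$; a direct inspection using the explicit structure of the mutation triangle shows that $S'$ is tilting, and adjoining the shifted projective $\Sigma Q$ preserves this. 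The right mutation case of $\Sigma A$ is dual. For case (c), the symmetric hypothesis $A\cong DA$ as $A$-bimodules (with $D=\Hom_{k}(-,k)$) yields natural isomorphisms $\RHom_{A}(X,A)\cong DX$ for $X\in\per(A)$; specialising to $X=T$ gives $H^{-1}(\tilde{S})\cong DH^{0}(T)$ and $H^{0}(\tilde{S})\cong DH^{-1}(T)$. This duality propagates via $p_{*}$ to a relation between $H^{0}(S)$ and $H^{-1}(S)$ which, together with the silting vanishing $\Hom_{\cd(A)}(T,\Sigma T)=0$, forces $\Hom_{B}(H^{0}(S), H^{-1}(S))=0$.

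The main obstacle I anticipate is case (a): the bookkeeping around the approximation triangle and the block decomposition of $B$ is intricate, whereas (b) admits an entirely explicit splitting and (c) has a clean $k$-linear duality.
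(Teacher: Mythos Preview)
Your reduction $\Hom_{\cd(B)}(S,\Sigma^{-1}S)\cong\Hom_{B}(H^{0}(S),H^{-1}(S))$ is correct, and via Yoneda (since $P^{-1}=\Hom(T,T')$, $P^{0}=\Hom(T,T'')$ with $T',T''\in\add T$) it is equivalent to the paper's Lemma~\ref{lem:morphisms-in-degree-minus1}:
\[
\Hom_{K^{b}(\proj B)}(S,\Sigma^{-1}S)\cong\{h\in\Hom_{K^{b}(\proj A)}(T'',T')\mid hf=0=fh\}.
\]
However, from this point on you work on the $B$-side, which forces intricate block bookkeeping, whereas the paper stays on the $A$-side and extracts a single criterion (Lemma~\ref{lem:stability-the-hereditary-case}): from the triangle $T'\xrightarrow{f}T''\to\Sigma A$, the relation $hf=0$ forces $h$ to factor through $\Sigma A$, so if $H^{-1}(T')=0$ (equivalently $\pd_{A}H^{0}(T')\leq 1$) then $\Hom(\Sigma A,T')=0$ and $h=0$. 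This one observation dispatches (a) immediately, and (b) by noting that for a left mutation one may take $T'=P\oplus(1-e)A\in\proj A$.

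Your case-by-case arguments have genuine gaps. In (b), the assertion that ``adjoining the shifted projective $\Sigma Q$ preserves tilting'' is false as stated: for $S'\oplus\Sigma Q$ one also needs $\Hom(S',\Sigma^{-1}\Sigma Q)=\Hom(S',Q)=0$, which is not automatic and which you do not check. In (a), the closing sentence ``the hereditary hypothesis then forces this morphism to vanish'' is an assertion, not a proof; nothing in your block analysis explains why an $\End_{A}(M)$-compatible map $\Ext^{1}_{A}(M,A)\to H^{-1}(S)$ must be zero. In (c) you miss the one-line route: over a finite-dimensional symmetric algebra every silting complex is already tilting (a standard consequence of $A\cong DA$ as bimodules), so $T$ is tilting, and then Corollary~\ref{cor:T-tilting=>S-tilting} gives that $S$ is tilting; your ``duality propagates via $p_{*}$'' sketch is both vague and unnecessary.
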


To prove Theorem~\ref{thm:stability}, we need the following lemmas. Clearly, $S$ is tilting if and only if $\Hom_{K^b(\proj B)}(S,\Sigma^{-1}S)=0$, because it is a 2-term silting complex.
Recall that there is a triangle in $K^b(\proj A)$
\begin{align*}
\xymatrix{
A\ar[r]  & T'\ar[r]^f & T''\ar[r] & \Sigma A
}
\end{align*}
with $T',~T''\in\add_{K^b(\proj A)}(T)$ and that $S=\Cone(\Hom(T,f))$. The following lemma, contained in the proof of \cite[Corollary 4.3]{BuanZhou16}, gives a characterisation of the morphisms in $\Hom_{K^b(\proj B)}(S,\Sigma^{-1}S)$.

\begin{lemma}
\label{lem:morphisms-in-degree-minus1}
There is an isomorphism of $k$-modules
\[
\Hom_{K^b(\proj B)}(S,\Sigma^{-1}S)\cong\{g\in\Hom_{K^b(\proj A)}(T'',T') \mid gf=0=fg\}
\]
\end{lemma}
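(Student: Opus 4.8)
The plan is to reduce the computation of $\Hom_{K^b(\proj B)}(S,\Sigma^{-1}S)$ to a computation on the level of $\per(\Gamma)$, where $\widetilde S=\RHom_\Lambda(T,\Sigma\Lambda)$ lives, and then transport the answer back to $\per(\Lambda)\simeq\per(A)$ via the triangle equivalence $\RHom_A(T,?)$. Concretely, recall from Theorem~\ref{thm:silting-theorem-2-the-silting-complex}(a) that $S=p_*(\widetilde S)$ and that $\widetilde S$ sits in a triangle
$\RHom_A(T,A)\to\RHom_A(T,T')\to\RHom_A(T,T'')\to\widetilde S$
in $\per(\Gamma)$, obtained by applying $\RHom_A(T,?)$ to the approximation triangle \eqref{eq:approximation-triangle-for-shift-of-Lambda}. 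Writing $P'=\RHom_A(T,T')$ and $P''=\RHom_A(T,T'')$, which lie in $\add_{\per(\Gamma)}(\Gamma_\Gamma)$, we have $\widetilde S=\Cone(P'\xrightarrow{g} P'')$ where $g=\RHom_A(T,f)$, and $S$ is obtained by applying $p_*$, under which $P',P''$ go to their $0$-th cohomologies and $g$ goes to $\Hom(T,f)$.

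The first step is to compute $\Hom_{\per(\Gamma)}(\widetilde S,\Sigma^{-1}\widetilde S)$ directly from the defining triangle, by applying $\Hom_{\per(\Gamma)}(?,\Sigma^{-1}\widetilde S)$ and $\Hom_{\per(\Gamma)}(\widetilde S,\Sigma^{-1}?)$ and chasing the resulting long exact sequences, using that $P',P''\in\add(\Gamma)$ so that $\Hom_{\per(\Gamma)}(P',\Sigma^i\widetilde S)$ and $\Hom_{\per(\Gamma)}(\widetilde S,\Sigma^i P'')$ vanish outside a single degree (since $\widetilde S$ is $2$-term and $\RHom_A(T,?)$ is an equivalence, one reads these off from the fact that $\Sigma A$ is $2$-term with respect to $T$). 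This should identify $\Hom_{\per(\Gamma)}(\widetilde S,\Sigma^{-1}\widetilde S)$ with $\{h\in\Hom_{\per(\Gamma)}(P'',P')\mid hg=0=gh\}$, and then applying the equivalence $\RHom_A(T,?)^{-1}$ (which sends $P''\mapsto T''$, $P'\mapsto T'$, $g\mapsto f$) gives the set on the right-hand side of the claimed isomorphism, namely $\{g\in\Hom_{K^b(\proj A)}(T'',T')\mid gf=0=fg\}$. So it remains to see that passing from $\per(\Gamma)$ to $K^b(\proj B)$ along $p_*$ does not change this $\Hom$-space in degree $-1$.

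The second step is thus the comparison $\Hom_{\per(\Gamma)}(\widetilde S,\Sigma^{-1}\widetilde S)\cong\Hom_{K^b(\proj B)}(S,\Sigma^{-1}S)$. Since $S$ and $\Sigma^{-1}S$ are both $2$-term silting, and $\widetilde S$ is $2$-term with $H^0(\widetilde S)\in\proj B$ after tensoring down, one can argue using \cite[Proposition A.5]{BruestleYang13} (the same tool used in the proof of Theorem~\ref{thm:silting-theorem-2-the-silting-complex}(c)): the induction functor $p_*$ is full on morphisms and its kernel on $\Hom(\widetilde S,\widetilde S)$ consists of maps factoring through $\Sigma\add(\Gamma)\to\add(\Gamma)$; the analogous statement with a shift shows that $\Hom(\widetilde S,\Sigma^{-1}\widetilde S)\to\Hom(S,\Sigma^{-1}S)$ is an isomorphism because there are no nonzero maps from $\Sigma^{-1}$ of something in $\add(\Gamma)$ to something in $\add(\Gamma)$ relevant to the kernel/cokernel in that degree. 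Alternatively, and perhaps more cleanly, one works directly with the cone presentation: $S=\Cone(Q'\xrightarrow{\bar g} Q'')$ with $Q',Q''\in\proj B$ and $\bar g=\Hom(T,f)$, and computes $\Hom_{K^b(\proj B)}(S,\Sigma^{-1}S)$ by the same double long-exact-sequence chase as in step one but now in $K^b(\proj B)$, using $\Hom_{K^b(\proj B)}(Q',\Sigma^{-1}S)$-type vanishing that follows from $S$ being $2$-term. This makes the two computations formally identical and the isomorphism tautological.

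The main obstacle I expect is bookkeeping the degree shifts correctly and verifying the vanishing statements $\Hom_{\per(\Gamma)}(P',\Sigma^i\widetilde S)=0$ and $\Hom_{\per(\Gamma)}(\widetilde S,\Sigma^i P'')=0$ for the relevant $i\neq 0$; these are exactly where $2$-termness of $\widetilde S$ (equivalently, of $\Sigma A$ with respect to $T$, via Lemma~\ref{lem:n-term-silting}) is used, and getting the single surviving degree right in each of the two long exact sequences is what pins down that the answer is $\{g\mid gf=0=fg\}$ rather than, say, $\{g\mid gf=0\}$ or a quotient thereof. Once the vanishing is in place, both the identification with morphisms $T''\to T'$ killed on both sides by $f$ and the invariance under $p_*$ are routine diagram chases.
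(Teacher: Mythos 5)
The paper does not actually prove this lemma: it is cited as being ``contained in the proof of \cite[Corollary 4.3]{BuanZhou16}''. So the comparison is with the intended Buan--Zhou-style computation, which is the direct double long-exact-sequence chase in $K^b(\proj B)$ starting from the triangle $Q'\xrightarrow{\bar g}Q''\to S\to\Sigma Q'$ with $Q',Q''\in\proj B$.

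Your primary plan (Steps 1--2, working in $\per(\Gamma)$ and then transporting along $p_*$) has a fatal gap. Under the equivalence $\RHom_\Lambda(T,?)\colon\per(\Lambda)\to\per(\Gamma)$, the object $\widetilde S$ corresponds to $\Sigma\Lambda$, so
\[
\Hom_{\per(\Gamma)}(\widetilde S,\Sigma^{-1}\widetilde S)\;\cong\;\Hom_{\per(\Lambda)}(\Sigma\Lambda,\Lambda)\;=\;H^{-1}(\Lambda),
\]
which vanishes in the relevant case $\Lambda=A$ (an ordinary algebra). On the other hand, the set $\{h\in\Hom_{\per(\Gamma)}(P'',P')\mid hg=0=gh\}$ corresponds, via the same equivalence, to $\{h'\in\Hom_{K^b(\proj A)}(T'',T')\mid h'f=0=fh'\}$, which is exactly the right-hand side of the lemma and is \emph{not} zero in general (if it were, $S$ would always be tilting and Section~\ref{s:stability} would be vacuous). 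So Step~1's claimed identification cannot hold; the long-exact-sequence chase in $\per(\Gamma)$ does not produce that set, because the required vanishing $\Hom_{\per(\Gamma)}(P'',\Sigma^{-1}P'')=0$ \emph{fails}: this group is $\Hom_{\per(\Lambda)}(T'',\Sigma^{-1}T'')$, and $2$-term silting imposes no condition on negative shifts. Consequently Step~2 is also wrong: you are trying to show that the map $p_*\colon\Hom_{\per(\Gamma)}(\widetilde S,\Sigma^{-1}\widetilde S)\to\Hom_{K^b(\proj B)}(S,\Sigma^{-1}S)$ is an isomorphism, but the source is zero and the target can be nonzero, so $p_*$ cannot be surjective in general. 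The point of the lemma is precisely that $p_*$ \emph{creates} new degree-$(-1)$ endomorphisms of $S$ that were not present for $\widetilde S$.

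Your ``alternative'' at the end of Step~2 is the correct approach and matches the intended proof: work directly in $K^b(\proj B)$ with the cone presentation $S=\Cone(Q'\xrightarrow{\bar g}Q'')$. There the needed vanishings are genuine, since $Q',Q''$ are projective $B$-modules concentrated in degree $0$: $\Hom(Q',\Sigma^{-2}S)=0$ and $\Hom(Q'',\Sigma^{-1}Q')=\Hom(Q'',\Sigma^{-1}Q'')=0$. Chasing the two long exact sequences identifies $\Hom_{K^b(\proj B)}(S,\Sigma^{-1}S)$ with $\{h\colon Q''\to Q'\mid \bar gh=0=h\bar g\}$, and transporting along the isomorphism $\Hom_B(Q'',Q')\cong\Hom_{K^b(\proj A)}(T'',T')$ (which sends $\bar g\leftrightarrow f$) gives the statement. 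However, be aware this computation is \emph{not} ``formally identical'' to the one you proposed in $\per(\Gamma)$, precisely because the vanishing lemmas available in the two settings differ; carrying it out in $\per(\Gamma)$ would yield $0$. You need to delete Steps~1--2 and carry out the alternative in full.
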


Then we have a sufficient condition in terms of $H^0(T')$.

\begin{lemma}
\label{lem:stability-the-hereditary-case}
If $\pd H^{0}(T')\leq 1$, then $S$ is tilting.
\end{lemma}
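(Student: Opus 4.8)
The plan is to use Lemma~\ref{lem:morphisms-in-degree-minus1} to reduce the tilting property of $S$ to showing that every $g\in\Hom_{K^b(\proj A)}(T'',T')$ with $gf=0=fg$ is itself zero, and to extract this vanishing from the hypothesis $\pd H^0(T')\leq 1$. First I would set up the $t$-structure picture: since $A$ is a $k$-algebra, $T'$ lives in $K^b(\proj A)\simeq\per(A)$ and is concentrated in degrees $-1$ and $0$; its cohomology is $H^{-1}(T')$ and $H^0(T')$, and the hypothesis controls the latter as an $A$-module. The key observation is that a map $g\colon T''\to T'$ in the derived category is governed, via the triangle $H^{-1}(T')\to T'\to H^0(T')\to\Sigma H^{-1}(T')$ (the truncation triangle), by $\Hom$ and $\Ext^1$ from the $2$-term complex $T''$ into $H^0(T')$ and $H^{-1}(T')$.

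Next I would exploit $gf=0$. From the triangle $A\xrightarrow{} T'\xrightarrow{f} T''\to\Sigma A$ in $K^b(\proj A)$, applying $\Hom_{K^b(\proj A)}(?,T')$ gives an exact sequence
\[
\Hom(\Sigma A,T')\to\Hom(T'',T')\xrightarrow{f^*}\Hom(T',T')\to\Hom(A,T').
\]
Thus $gf=0$ means $g$ lifts along $f^*$ from $\Sigma A$, i.e. $g$ factors as $T''\to\Sigma A\xrightarrow{h}T'$ for some $h$, where the first map is the connecting map of the triangle. Now $\Hom_{K^b(\proj A)}(\Sigma A,T')=\Hom_{\per(A)}(\Sigma A,T')=H^{-1}(T')$ by \eqref{eq:morphisms-vs-cohomologies}, so $g$ is determined by an element of $H^{-1}(T')$. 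The other relation $fg=0$ then has to force this element — or rather the resulting $g$ — to vanish, and this is where $\pd H^0(T')\leq 1$ enters: a low projective dimension of $H^0(T')$ makes the truncation triangle for $T'$ split off enough so that the composite $T''\to\Sigma A\xrightarrow{h}T'\xrightarrow{f}T''$ cannot be zero unless $h$ lands in the $H^{-1}$-part in a way that is already killed. Concretely, I expect to show that $\Ext^1_A(H^0(T''),H^0(T'))=0$ (using $\pd H^0(T')\leq 1$ together with the fact that $H^0(T'')$ is a quotient of projectives appearing in $\add H^0(T)$, which is a silting module and hence has projective dimension at most one in the relevant sense), so that any $g$ as above is forced into a smaller space on which the conjunction $gf=0=fg$ becomes trivial.

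The main obstacle will be the bookkeeping in the second step: turning the two cone/factorisation conditions $gf=0=fg$ simultaneously into a statement purely about $\Hom$ and $\Ext^1$ of module-level objects, and checking that the vanishing $\Ext^1_A(H^0(T''),H^0(T'))=0$ really does follow from $\pd H^0(T')\leq 1$ (one needs $H^0(T'')$ to have projective dimension $\leq 1$ as well, which should come from $T''\in\add T$ and $H^0(T)$ being a silting $A$-module by \cite[Theorem 4.9]{AngeleriMarksVitoria16}, but the precise argument requires care). Once that $\Ext$-group vanishes, the map $g$ is supported on the $H^{-1}$-parts, and a direct diagram chase with the triangle $A\to T'\to T''\to\Sigma A$ — noting $H^{-1}(A)=0$ since $A$ is an honest algebra — shows $g=0$. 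Then Lemma~\ref{lem:morphisms-in-degree-minus1} gives $\Hom_{K^b(\proj B)}(S,\Sigma^{-1}S)=0$, and since $S$ is already a $2$-term silting complex by Theorem~\ref{thm:silting-theorem-algebra}(c), it is tilting.
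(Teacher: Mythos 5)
Your opening moves match the paper's exactly: reduce via Lemma~\ref{lem:morphisms-in-degree-minus1} to showing that every $g\colon T''\to T'$ with $gf=0=fg$ vanishes, observe that $gf=0$ forces $g$ to factor through $\Sigma A$, and identify $\Hom_{K^b(\proj A)}(\Sigma A,T')$ with $H^{-1}(T')$ via \eqref{eq:morphisms-vs-cohomologies}. Up to that point you are on the right track.

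The gap is in what comes next. You assume you still need to bring $fg=0$ into play and you sketch an $\Ext$-vanishing argument --- ``$\Ext^1_A(H^0(T''),H^0(T'))=0$'' plus a ``diagram chase'' --- which you yourself flag as the main obstacle and leave unverified. None of that is needed. The paper's observation is that the hypothesis $\pd H^0(T')\leq 1$ already forces $H^{-1}(T')=0$: since $T'=(P^{-1}\to P^0)$ is a $2$-term complex of projectives, there is the truncation triangle $\Sigma H^{-1}(T')\to T'\to H^0(T')\to\Sigma^2 H^{-1}(T')$ (note the shifts; you wrote $H^{-1}(T')\to T'\to H^0(T')\to\Sigma H^{-1}(T')$, which is off by one), and $\pd H^0(T')\leq 1$ is equivalent to $H^{-1}(T')=0$. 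Hence $\Hom(\Sigma A,T')=H^{-1}(T')=0$, so the factorisation from $gf=0$ already gives $g=0$, and $fg=0$ plays no role. Your proposal, as written, is incomplete and takes an unnecessary detour at exactly the step where a one-line observation closes the argument.
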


\begin{proof}
In view of Lemma~\ref{lem:morphisms-in-degree-minus1}, we only need to show that if $g\colon T''\to T'$ is a morphism such that $fg=0=gf$, then $g=0$. The condition $gf=0$ implies that $g$ factors through $\Sigma A$:
\[
\xymatrix{
T'\ar[r]^f & T''  \ar[r]\ar[d]^g & \Sigma A \ar@{-->}[dl]\\
A  \ar[r] & T' \ar[r]^f & T''. 
}
\]
Since $T'\in \add(T)$ is a $2$-term complex, there is a triangle
\[
\Sigma H^{-1}(T^{\prime})\rightarrow T^{\prime}\rightarrow H^{0}(T^{\prime})\rightarrow \Sigma^2 H^{-1}(T^{\prime}).
\]
It follows that $\pd H^{0}(T^{\prime})\leq 1$ if and only if $H^{-1}(T^{\prime})=0$.
Thus by our assumption we have $\Hom_{K^b(\proj A)}(\Sigma A,T^{\prime})=H^{-1}(T^{\prime})=0$.
Therefore $g=0$.
\end{proof}

\begin{proof}[Proof of Theorem~\ref{thm:stability}] 

(a) If $A$ is hereditary, then $\pd H^{0}(T^{\prime})\leq 1$. So $S$ is tilting by Lemma~\ref{lem:stability-the-hereditary-case}.

(b) If $T$ is the left mutation of $A$, then $T=T_{e}\oplus eA$, where $e$ is an idempotent of $A$ such that $\add(eA)\cap\add((1-e)A)=0$, and $T_{e}$ is given by the following triangle 
\[
eA\rightarrow P\rightarrow T_{e}\rightarrow \Sigma(eA),
\]
with the first morphism is a left $\add((1-e)A)$-approximation. Thus $T^{\prime}=P\oplus (1-e)A$, which is a projective $A$-module. Then $S$ is tilting by Lemma~\ref{lem:stability-the-hereditary-case}. The proof of the other statement is dual.

(c) If $A$ is symmetric, then $T$ is tilting, and hence $S$ is tilting by Corollary~\ref{cor:T-tilting=>S-tilting}.\end{proof}

\begin{remark}
Yu Zhou pointed out that Theorem~\ref{thm:stability}(a) can be proved using the corresponding torsion pairs. By \cite[Lemma 5.5]{BuanZhou16}, $T$ is splitting, \ie the torsion pair $(\mathscr{X}_T,\mathscr{Y}_T)$ splits, namely, $(\mathscr{T}_S,\mathscr{F}_S)$ splits, \ie $S$ is separating. Then it is tilting by \cite[Proposition 5.7]{BuanZhou16}.
\end{remark}

In the following example none of the three conditions in Theorem~\ref{thm:stability} is satisfied but $S$ is tilting.

\begin{example}
Let $A$ be the algebra given by the quiver
\[
\xymatrix{
1 & 2\ar[l]_\alpha & 3\ar[l]_{\beta}
}
\]
with relation $\alpha\beta=0$. For a vertex $i$, let $S_i$ and $P_i$ denote the corresponding simple and projective module, respectively.
Take the $2$-term silting complex $T=P_1\oplus (P_2\xrightarrow{\alpha} P_1)\oplus \Sigma P_3$ (the corresponding support $\tau$-tilting module is $M=P_1\oplus S_1$).
Then the endomorphism algebra $B=\End_{K^b(\proj A)}(T)$ is the path algebra of the quiver
\[
\xymatrix{
a\ar[r]^\gamma & b & c.
}
\] 
Then the 2-term silting complex $S$ is $\Sigma P_a\oplus (P_a\xrightarrow{\gamma} P_b)\oplus P_c$.
Direct computation shows that $\Hom_{K^b(\proj B)}(S,\Sigma^{-1}S)=0$, thus $S$ is tilting.
\end{example}

\def\cprime{$'$}
\providecommand{\bysame}{\leavevmode\hbox to3em{\hrulefill}\thinspace}
\providecommand{\MR}{\relax\ifhmode\unskip\space\fi MR }
\providecommand{\MRhref}[2]{%
  \href{http://www.ams.org/mathscinet-getitem?mr=#1}{#2}
}
\providecommand{\href}[2]{#2}

\end{document}